\newcommand*\dif{\mathop{}\!\mathrm{d}}
\theoremstyle{definition}
\newtheorem{prop}{Proposition}
\title{ 
	\textbf{Real and complexified configuration spaces for planar 4-bar linkages}
}
\author{Zeyuan He, Simon D. Guest \\ \small Department of Engineering, University of Cambridge \\ \small zh299@cam.ac.uk, sdg@eng.cam.ac.uk}
\date{}
\begin{document}\maketitle
This note is a complete library of symbolic parametrized expressions for both real and complexified configuration spaces of a planar 4-bar linkage. Building upon the previous work from \citet{izmestiev_deformation_2015}, this library expands on the expressions by incorporating all four rotational angles across all possible linkage length choices, along with the polynomial relation between diagonals. Furthermore, a complete MATLAB app script \citet{he_elliptic-fun-based_2023} is included, enabling visualization and parametrization. The derivations are presented in a detailed manner, ensuring accessibility for researchers across diverse disciplines.

Planar 4-bar linkage is a fundamental component of single-degree-of-freedom mechanisms that have played a significant role in engineering for over two centuries. Extensive literature exists on its theories and applications. If focusing on the theoretical part, this note provides direct analytical calculations and examinations for all algebraic analyses pertaining to a planar 4-bar linkage. We refer to \citet{muller_novel_1996} for detailed illustration on each class of planar 4-bar linkage based on the number of independent linkage lengths and the application of planar 4-bar linkage in biological systems. In addition to our result, they provided discussion on the case when three of the linkage lengths are equal; \citet{gibson_geometry_1986} for the algebraic geometry of the coupler curve -- path traced by a specific point on the coupler link as it moves through its range of motion; \citet{torfason_double_1978} for details on the cusps, crunodes, double points and break points on corresponding elliptic curves; \citet{davies_finite_1979} for a lifting that maps the dimensions of planar 4-bar linkages onto finite regions in the three-dimensional space in-order for further parameter selection in engineering design; \citet{khimshiashvili_complex_2011} for a series of results on the moduli space, cross-ratio, generalized Heron polynomials used for calculating the area of the quadrilateral, etc.; \citet{chaudhary_balancing_2007} for a related linkage balancing problem in engineering application -- to reduce the amplitude of vibration and to smoothen highly fluctuating input-torque needed to obtain nearly constant drive speed.

In conjunction with these aforementioned contributions, the information provided in this note serves as a valuable handbook for engineers, architects, and mathematicians seeking parametrized symbolic expressions for configuring 4-bar linkage-based mechanisms.

\section{Modelling} \label{section: s1}

In this section we will set up the notation for a planar 4-bar linkage, as shown in Figure \ref{fig: planar 4-bar linkage}. We use $\alpha, ~\beta, ~\gamma, ~\delta \in \mathbb{R}^+$ to describe the \textbf{bar lengths} and four \textbf{rotational angles} $\rho_x, ~\rho_y, ~\rho_z, ~\rho_w \in \mathbb{R} \slash 2\pi$ to describe the configuration. $\mathbb{R} \slash 2\pi$ means $a, ~b \in \mathbb{R}$ are equivalent if $b = a + 2k\pi, ~k \in \mathbb{Z}$, which implies that we take $-\pi$ and $\pi$ as the same rotational angle.

The tangents of half of the rotational angles are defined as 
\begin{equation}
	\begin{gathered}
		x=\tan \dfrac{\rho_x}{2}, ~~y=\tan \dfrac{\rho_y}{2}, ~~z=\tan \dfrac{\rho_z}{2}, ~~w=\tan \dfrac{\rho_w}{2} \\
		x, ~y, ~z, ~w \in \mathbb{R} \cup \{\infty\}
	\end{gathered}
\end{equation}
$\mathbb{R} \cup \{\infty\}$ is the \textbf{one-point compactification} of the real line, which `glues' $-\infty$ and $\infty$ to the same point. Moreover, in further algebraic discussions. $u$ and $v$ are the lengths of diagonals.

\begin{figure} [!tb]
	\noindent \begin{centering}
		\includegraphics[width=0.5\linewidth]{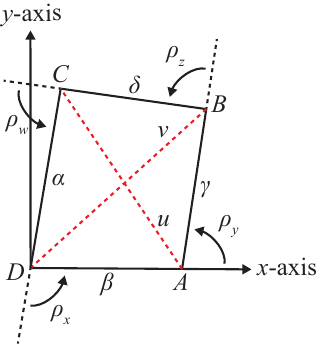}
		\par \end{centering}
	
	\caption{\label{fig: planar 4-bar linkage}(a) A planar 4-bar linkage. We label the linkages lengths counter-clockwise as $\alpha$, $\beta$, $\gamma$, $\delta$, and the rotational angles counter-clockwise as $\rho_x$, $\rho_y$, $\rho_z$, $\rho_w$. The tangents of half of these folding angles are $x, ~y, ~z, ~w$. Then $u$ and $v$ are the lengths of two diagonals of the quadrilateral,}
\end{figure}

First, we have the following proposition on the range of linkage lengths.
\begin{prop} \label{prop: linkage length range}
	$\alpha, ~\beta, ~\gamma, ~\delta \in \mathbb{R}^+$ are the bar lengths of a planar 4-bar linkage if and only if
	\begin{equation} \label{eq: linkage length range}
		\begin{cases}
			\begin{gathered}
				\alpha < \beta + \gamma + \delta \\
				\beta < \gamma + \delta + \alpha \\
				\gamma < \delta + \alpha + \beta \\
				\delta < \alpha + \beta + \gamma 
			\end{gathered}
		\end{cases}
	\end{equation}
\end{prop}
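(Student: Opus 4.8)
The plan is to treat the four bar lengths as the edge lengths of a closed planar quadrilateral and to prove the two implications geometrically. I regard a configuration of the linkage as an ordered quadruple of plane vectors $\vec{a}, \vec{b}, \vec{c}, \vec{d}$ with $|\vec{a}| = \alpha$, $|\vec{b}| = \beta$, $|\vec{c}| = \gamma$, $|\vec{d}| = \delta$ that closes up, i.e. $\vec{a} + \vec{b} + \vec{c} + \vec{d} = \vec{0}$, and I insist that a genuine linkage admits such a configuration that is not collinear, since otherwise the mechanism is frozen into a single flat position and carries no configuration space at all.

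For the necessity direction, I would start from the closure relation, which gives $\vec{a} = -(\vec{b} + \vec{c} + \vec{d})$ and hence $\alpha = |\vec{b} + \vec{c} + \vec{d}| \le \beta + \gamma + \delta$ by the triangle inequality for vectors. To upgrade this to a strict inequality I would examine the equality case: equality in $|\vec{b} + \vec{c} + \vec{d}| = |\vec{b}| + |\vec{c}| + |\vec{d}|$ forces $\vec{b}, \vec{c}, \vec{d}$ to be non-negative multiples of a common direction, whence $\vec{a}$ is antiparallel to them and the whole configuration is collinear, contradicting non-degeneracy. Cycling the roles of the four vectors then yields all four strict inequalities in \eqref{eq: linkage length range}.

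For the sufficiency direction, I would cut the quadrilateral along the diagonal $u$ joining the vertex between $\delta$ and $\alpha$ to the vertex between $\beta$ and $\gamma$. This splits the closed quadrilateral into a triangle with sides $\alpha, \beta, u$ and a triangle with sides $\gamma, \delta, u$, so constructing a configuration reduces to choosing a single diagonal length $u$ for which both triangles exist. By the triangle inequality, such triangles exist precisely when $u$ lies in $(|\alpha-\beta|, \alpha+\beta)$ and in $(|\gamma-\delta|, \gamma+\delta)$ respectively; I would then show these two open intervals overlap, i.e.
\begin{equation}
	\max\bigl(|\alpha-\beta|,\, |\gamma-\delta|\bigr) < \min\bigl(\alpha+\beta,\, \gamma+\delta\bigr),
\end{equation}
which, after expanding the four cases of the $\max/\min$, is logically equivalent to the system \eqref{eq: linkage length range}. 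Picking any $u$ in the overlap yields two genuine non-degenerate triangles that glue along the diagonal into a non-collinear closed quadrilateral, i.e. a valid linkage.

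I expect the main obstacle to be the careful bookkeeping of strictness and degeneracy rather than any deep idea: showing that the strict inequalities are exactly what separates a movable mechanism from a frozen flat one, and verifying that the interval-overlap condition unpacks to precisely the four stated inequalities once the two trivially true conditions $|\alpha-\beta|<\alpha+\beta$ and $|\gamma-\delta|<\gamma+\delta$ are discarded. A secondary point to handle cleanly is that one genuinely needs the \emph{open} intervals to intersect, so the boundary cases where a triangle collapses must be excluded in order to guarantee an honest two-dimensional quadrilateral rather than a collinear one.
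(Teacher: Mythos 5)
Your proposal is correct and follows essentially the same route as the paper: necessity from the (strict) triangle inequality applied to the closure of the quadrilateral, and sufficiency by cutting along a diagonal and checking that the two triangle-inequality intervals for its length overlap exactly when the four stated inequalities hold. The only differences are cosmetic — you use a single diagonal where the paper redundantly uses both, and you are more explicit about the equality/collinearity bookkeeping that the paper leaves implicit.
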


\begin{proof}
	Necessity: for a planar quadrilateral (possibly self-intersected), Equation \eqref{eq: linkage length range} holds.
	
	Sufficiency: consider the four triangles $(\alpha,~ \beta, ~u)$, $(\gamma, ~\delta, ~u)$, $(\beta, ~ \gamma, ~v)$ and $(\delta, ~\alpha, ~v)$ when $\alpha, ~\beta, ~\gamma, ~\delta \in \mathbb{R}^+$. $u$ and $v$ should satisfy the following condition:
	\begin{equation*}
		\begin{cases}
			\begin{gathered}
				|\alpha-\beta| < u < \alpha+\beta \\
				|\gamma-\delta| < u < \gamma+\delta \\
				|\beta-\gamma| < v < \beta+\gamma \\
				|\delta-\alpha| < v < \delta+\alpha 
			\end{gathered}
		\end{cases}
	\end{equation*}
	The sufficient condition for $\alpha, ~\beta, ~ \gamma, ~\delta$ to be the linkage lengths is the range of $u$ and $v$ being non-empty, which means:
	\begin{equation*}
		\begin{cases}
			\begin{gathered}
				|\alpha-\beta|  < \gamma+\delta \\
				|\gamma-\delta| < \alpha+\beta \\
				|\beta-\gamma| < \delta+\alpha \\
				|\delta-\alpha| < \beta+\gamma \\ 
			\end{gathered}
		\end{cases}
	\end{equation*}
	This leads to Equation \eqref{eq: linkage length range}.
\end{proof}

Next we will give the polynomial relation between:
\begin{enumerate} [label={[\arabic*]}]
	\item adjacent rotational angles $x$ and $y$; $x$ and $w$.
	\item opposite rotational angles $x$ and $z$.
	\item lengths of diagonals $u$ and $v$.
\end{enumerate}
then conduct a detailed case-by-case discussion.

\section{Relation between adjacent rotational angles}

One method to derive the relation between $x$ and $y$ is to build an Euclidean coordinate, as shown in Figure \ref{fig: planar 4-bar linkage}. We have:
\begin{equation*}
	\begin{gathered}
		B \left(\beta+\gamma \cos \rho_y, ~~ \gamma \sin \rho_y \right) \\
		C \left(-\alpha \cos \rho_x, ~~ \alpha \sin \rho_x \right) 
	\end{gathered}
\end{equation*}
Knowing the distance of $BC$, we could obtain the relation between $x$ and $y$:
\begin{equation*}
	\begin{aligned}
		(\beta+\gamma \cos \rho_y+\alpha \cos \rho_x)^2+(\gamma \sin \rho_y-\alpha \sin \rho_x)^2=\delta^2
	\end{aligned}
\end{equation*}
Rewrite the above in a rational form:
\begin{equation*}
	\begin{aligned}
		\left( \beta+\gamma \dfrac{1-y^2}{1+y^2}+\alpha \dfrac{1-x^2}{1+x^2} \right)^2+\left(\gamma \dfrac{2y}{1+y^2}-\alpha \dfrac{2x}{1+x^2} \right)^2=\delta^2
	\end{aligned}
\end{equation*}
The next step is to simplify the above to a polynomial form below:
\begin{equation} \label{eq: planar 4-bar linkage}
	f(\alpha, ~\beta, ~\gamma, ~\delta, ~x, ~y) = f_{22} x^2y^2 + f_{20}x^2 + 2f_{11}xy + f_{02}y^2 + f_{00} = 0
\end{equation}
\begin{equation*}
	\begin{gathered}
		f_{22} = \dfrac{(\alpha-\beta+\gamma+\delta)(\alpha-\beta+\gamma-\delta)}{4} = (\sigma-\beta)(\sigma-\beta-\delta)  \\
		f_{20} = \dfrac{(-\alpha+\beta+\gamma+\delta)(-\alpha+\beta+\gamma-\delta)}{4} = (\sigma-\alpha)(\sigma-\alpha-\delta) \\
		f_{11} = - \alpha \gamma \\
		f_{02} = \dfrac{(\alpha+\beta-\gamma-\delta)(\alpha+\beta-\gamma+\delta)}{4} =  (\sigma-\gamma)(\sigma-\gamma-\delta) \\
		f_{00} = \dfrac{(\alpha+\beta+\gamma+\delta)(\alpha+\beta+\gamma-\delta)}{4} = 
		\sigma(\sigma-\delta) \\
		\sigma=\dfrac{\alpha+\beta+\gamma+\delta}{2}
	\end{gathered}
\end{equation*}
$\sigma$ is the \textit{semi-perimeter} of a planar 4-bar linkage. Note that $\alpha, ~\beta, ~\gamma, ~\delta$ satisfy Proposition \ref{prop: linkage length range}, and we allow $x, ~y \in \mathbb{R} \cup \{\infty\}$ in Equation \eqref{eq: planar 4-bar linkage}. 

Further we will classify Equation \eqref{eq: planar 4-bar linkage} based on the \textbf{degree of degeneracy} -- how many of $f_{22}, ~f_{20}, ~f_{02}$ are zero. Clearly $f_{11}<0$, and from Proposition \ref{prop: linkage length range} we have:
\begin{equation*}
	\sigma-\alpha \neq 0, ~~ \sigma-\beta \neq 0, ~~ \sigma-\gamma \neq 0, ~~ \sigma-\delta \neq 0
\end{equation*}
so $f_{00}>0$. We classify a planar 4-bar linkage with the same terminologies provided in previous references:
\begin{enumerate} [label={[\arabic*]}]
	\item a \textit{rhombus}, if:
	\begin{equation*}
		f_{22}=0, ~~ f_{20}=0,~~f_{02}=0 ~~ \Leftrightarrow ~~  \alpha=\beta=\gamma=\delta
	\end{equation*}
	\item an \textit{isogram}, if: 
	\begin{equation*}
		f_{22} \neq 0, ~~ f_{20}=0,~~f_{02}=0 ~~ \Leftrightarrow ~~ \gamma=\alpha, ~~ \delta=\beta, ~~\beta \neq \alpha
	\end{equation*}
	\item a \textit{deltoid I}, if:
	\begin{equation*}
		f_{22}=0, ~~ f_{20} \neq 0, ~~f_{02}=0 ~~ \Leftrightarrow ~~ \delta=\alpha, ~~ \gamma=\beta, ~~\beta \neq \alpha
	\end{equation*}
	\item a \textit{deltoid II}, if:
	\begin{equation*}
		f_{22}=0, ~~ f_{20}=0 , ~~ f_{02} \neq 0 ~~ \Leftrightarrow ~~ \alpha=\beta, ~~ \delta=\gamma, ~~\gamma \neq \beta
	\end{equation*}
	\item of \textit{type conic I}, if:
	\begin{equation*} 
		f_{22}=0, ~~ f_{20} \neq 0 , ~~ f_{02} \neq 0  ~~ \Leftrightarrow ~~ \begin{dcases}
			\alpha - \beta + \gamma - \delta = 0 \\
			\alpha - \beta - \gamma + \delta \neq 0 \\
			\alpha + \beta - \gamma - \delta \neq 0
		\end{dcases}
	\end{equation*}
	\item of \textit{type conic II}, if:
	\begin{equation*} 
		f_{22} \neq 0, ~~ f_{20} = 0 , ~~ f_{02} \neq 0 ~~ \Leftrightarrow ~~ \begin{dcases}
			\alpha - \beta + \gamma - \delta \neq 0 \\
			\alpha - \beta - \gamma + \delta = 0 \\
			\alpha + \beta - \gamma - \delta \neq 0
		\end{dcases}
	\end{equation*}
	\item of \textit{type conic III}, if:
	\begin{equation*} 
		f_{22} \neq 0, ~~ f_{20} \neq 0 , ~~ f_{02} = 0 ~~ \Leftrightarrow ~~ \begin{dcases}
			\alpha - \beta + \gamma - \delta \neq 0 \\
			\alpha - \beta - \gamma + \delta \neq 0 \\
			\alpha + \beta - \gamma - \delta = 0
		\end{dcases}
	\end{equation*}
	\item of type \textit{elliptic}, if:
	\begin{equation*} 
		f_{22} \neq 0, ~~ f_{20} \neq 0 , ~~ f_{02} \neq 0 ~~ \Leftrightarrow ~~ \begin{dcases}
			\alpha - \beta + \gamma - \delta \neq 0 \\
			\alpha - \beta - \gamma + \delta \neq 0 \\
			\alpha + \beta - \gamma - \delta \neq 0
		\end{dcases}
	\end{equation*}
	Further, if the following relation is satisfied:
	\begin{equation*}
		\alpha^2 + \gamma^2 = \beta^2 + \delta^2
	\end{equation*}
	We call this type \textit{orthodiagonal} since the diagonals are orthogonal, and we will analyze its special configuration space later.
\end{enumerate}
Similarly we have the following relation over $x$ and $w$:
\begin{equation} \label{eq: planar 4-bar linkage 2}
	f( \beta, ~\alpha, ~\delta, ~\gamma, ~x, ~w) = 0
\end{equation} 

\section{Relation between opposite rotational angles}

We could then derive the relation between $z^2$ and $x^2$:
\begin{equation*}
	\begin{gathered}
		u^2 = \alpha^2 + \beta^2 +2 \alpha \beta \cos \rho_x = \delta^2 + \gamma^2 +2 \delta \gamma \cos \rho_z  \\
		\Rightarrow \alpha^2 + \beta^2 +2 \alpha \beta \dfrac{1-x^2}{1+x^2} = \delta^2 + \gamma^2 +2 \delta \gamma \dfrac{1-z^2}{1+z^2} 
	\end{gathered}
\end{equation*}
The next step is to transfer this equation to a polynomial:
\begin{equation} \label{eq: opposite rotational angle}
	g(\alpha, ~\beta, ~\gamma, ~\delta, ~x, ~ z)=g_{22}x^2z^2+g_{20}x^2+g_{02}z^2+g_{00}=0
\end{equation}
\begin{equation*} 
	\begin{gathered}
		g_{22}= \dfrac{(\alpha-\beta+\gamma-\delta)(-\alpha+\beta+\gamma-\delta)}{4} = (\sigma-\alpha-\gamma)(\sigma-\beta-\gamma) \\
		g_{20} = \dfrac{(- \alpha + \beta + \gamma + \delta)(\alpha - \beta + \gamma + \delta)}{4} = (\sigma-\alpha) (\sigma-\beta) \\
		g_{02} = \dfrac{(-\alpha - \beta + \gamma - \delta)(\alpha + \beta + \gamma - \delta)}{4} = -(\sigma-\gamma) (\sigma-\delta) \\
		g_{00} = \dfrac{(\alpha + \beta + \gamma + \delta)(-\alpha -\beta + \gamma + \delta)}{4} = \sigma (\sigma - \alpha - \beta) \\
		\sigma=\dfrac{\alpha+\beta+\gamma+\delta}{2}
	\end{gathered}
\end{equation*}
The identities [1]--[4] listed in Proposition \ref{prop: identities} would help to understand the symmetry of coefficients in Equation \eqref{eq: opposite rotational angle}.

It is clear that $g_{20}>0$ and $g_{02}<0$. Note that $x$ and $z$ can be separated if $g_{22}x^2+g_{02} \neq 0$:
\begin{equation}
	z^2=-\dfrac{g_{20}x^2+g_{00}}{g_{22}x^2+g_{02}}
\end{equation}
Equation \eqref{eq: opposite rotational angle} after a case-by-case study is:
\begin{enumerate} [label={[\arabic*]}]
	\item rhombus
	\begin{equation*}
		\begin{aligned}
			\alpha=\beta=\gamma=\delta & ~~ \Rightarrow ~~ g_{22}=0, ~~ g_{20}=-g_{02}=\alpha^2, ~~ g_{00}=0 \\
			& ~~ \Rightarrow ~~ z = \pm x
		\end{aligned}
	\end{equation*}
	\item isogram 
	\begin{equation*}
		\begin{aligned}
			\gamma=\alpha, ~~ \delta=\beta, ~~\beta \neq \alpha & ~~ \Rightarrow ~~ g_{22}=0, ~~ g_{20}=-g_{02}=\alpha\beta, ~~ g_{00}=0 \\
			& ~~ \Rightarrow ~~ z= \pm x
		\end{aligned}
	\end{equation*}
	\item deltoid I
	\begin{equation*}
		\begin{aligned}
			\delta=\alpha, ~~ \gamma=\beta, ~~\beta \neq \alpha & ~~ \Rightarrow ~~ g_{22}=0, ~~ g_{20}=-g_{02}=\alpha\beta, ~~ g_{00}=0 \\
			& ~~ \Rightarrow ~~ z= \pm x
		\end{aligned}
	\end{equation*}
	\item deltoid II
	\begin{equation*}
		\begin{aligned}
			\alpha=\beta, ~~ \delta=\gamma, ~~\gamma \neq \beta & ~~ \Rightarrow ~~ g_{22}=0, ~~ g_{20}=\gamma^2, ~~ g_{02}=-\beta^2, ~~ g_{00}=\gamma^2-\beta^2 \\
			& ~~ \Rightarrow ~~ \beta^2 (z^2 +1) = \gamma^2 (x^2+1)
		\end{aligned}
	\end{equation*}
	\item conic I
	\begin{equation*}
		\begin{aligned}
			\begin{dcases}
				\alpha - \beta + \gamma - \delta = 0 \\
				\alpha - \beta - \gamma + \delta \neq 0 \\
				\alpha + \beta - \gamma - \delta \neq 0
			\end{dcases} & ~~ \Rightarrow ~~ \begin{cases}
				g_{22}=0, ~~ g_{20}=\gamma\delta, ~~ g_{02}=-\alpha\beta, \\  g_{00}=(\alpha+\gamma)(-\beta+\gamma)= -\alpha\beta+\gamma\delta
			\end{cases} \\
			& ~~ \Rightarrow ~~ \alpha \beta (z^2+1) = \gamma\delta (x^2 +1 )
		\end{aligned}
	\end{equation*}
	\item conic II
	\begin{equation*}
		\begin{aligned}
			\begin{dcases}
				\alpha - \beta + \gamma - \delta \neq 0 \\
				\alpha - \beta - \gamma + \delta = 0 \\
				\alpha + \beta - \gamma - \delta \neq 0
			\end{dcases} & ~~ \Rightarrow ~~ \begin{cases}
				g_{22}=0, ~~ g_{20}=\gamma\delta, ~~ g_{02}=-\alpha\beta, \\  g_{00}= (-\alpha+\gamma)(\beta+\gamma)=-\alpha\beta+\gamma\delta
			\end{cases} \\
			& ~~ \Rightarrow ~~ \alpha \beta (z^2+1) = \gamma\delta (x^2 +1 )
		\end{aligned}
	\end{equation*}
	\item conic III
	\begin{equation*} 
		\begin{aligned}
			\begin{dcases}
				\alpha - \beta + \gamma - \delta \neq 0 \\
				\alpha - \beta - \gamma + \delta \neq 0 \\
				\alpha + \beta - \gamma - \delta = 0
			\end{dcases} & ~~ \Rightarrow ~~ \begin{cases}
				g_{22}= (\beta-\gamma)(\alpha-\gamma) =\alpha\beta-\gamma\delta, \\
				g_{20}=\alpha\beta, ~~ g_{02}=-\gamma\delta, ~~ g_{00}=0
			\end{cases} \\
			& ~~ \Rightarrow ~~ \alpha \beta (z^{-2}+1) = \delta\gamma (x^{-2} +1 )
		\end{aligned}
	\end{equation*}
	\item Elliptic. See Section \ref{section: planar elliptic} since the expression is not reduced.
\end{enumerate}

\section{Relation between the lengths of diagonals}

The polynomial relation between the lengths of diagonals $u$ and $v$ is also of interest to us and is actually used quite often.

For simplicity, we need to consider the relation between $u$ and $v$ without involving the folding angles. A great tool is called the \textit{Cayley--Menger determinant} over the four points of a planar 4-bar linkage. The coplanarity is equivalent to
\begin{equation*}
	\begin{gathered}
		\left|
		\begin{matrix}
			0 & 1 & 1 & 1 & 1 \\
			1 & 0 & \alpha^2 & u^2 & \delta^2 \\
			1 & \alpha^2 & 0 & \beta^2 & v^2 \\
			1 & u^2 & \beta^2 & 0 & \gamma^2 \\
			1 & \delta^2 & v^2 & \gamma^2 & 0
		\end{matrix}
		\right| = 0 \\[10pt]
	\end{gathered}
\end{equation*}
After full simplification, the result is an order 3 polynomial over $u^2$ and $v^2$:
\begin{equation} \label{eq: relation between diagonals planar}
	h(\alpha, ~\beta, ~\gamma, ~\delta, ~u, ~ v)=u^4v^2+u^2v^4+h_{11}u^2v^2+h_{10}u^2+h_{01}v^2+h_{00}=0
\end{equation}
where
\begin{equation*}
	\begin{gathered}
		h_{11}=-(\alpha^2+\beta^2+\gamma^2+\delta^2) \\
		h_{10}=-(\beta^2-\gamma^2)(\delta^2-\alpha^2) \\
		h_{01}=-(\alpha^2-\beta^2)(\gamma^2-\delta^2) \\
		h_{00}=(\alpha^2 \gamma^2-\beta^2 \delta^2) (\alpha^2-\beta^2+\gamma^2-\delta^2) \\
	\end{gathered}
\end{equation*}
Here we use five simple examples to show how to make use of Equation \eqref{eq: relation between diagonals planar}:
\begin{enumerate} [label={[\arabic*]}]
	\item rhombus
	\begin{equation*}
		\begin{aligned}
			\alpha=\beta=\gamma=\delta & ~~ \Rightarrow ~~ h_{11}=-4\alpha^2, ~~ h_{10}=h_{01}=h_{00}=0 \\
			& ~~ \Rightarrow ~~ u^2+v^2=4\alpha^2
		\end{aligned}
	\end{equation*}
	\item isogram 
	\begin{equation*}
		\begin{aligned}
			\gamma=\alpha, ~~ \delta=\beta  ~~ \Rightarrow ~~ & h_{11}=-2(\alpha^2+\beta^2), ~~ h_{10}=h_{01}=-(\alpha^2-\beta^2)^2 \\ & h_{00}=2(\alpha^4-\beta^4)(\alpha^2-\beta^2) \\
			~~ \Rightarrow ~~ & (uv-\alpha^2+\beta^2) (uv+\alpha^2-\beta^2) (u^2+v^2-2\alpha^2-2\beta^2)=0 \\
			~~ \Rightarrow ~~ & u^2+v^2=2\alpha^2+2\beta^2
		\end{aligned}
	\end{equation*}
	since $uv>\alpha^2$ and $uv>\beta^2$.
	\item deltoid I
	\begin{equation*}
		\begin{aligned}
			\delta=\alpha, ~~ \gamma=\beta & ~~ \Rightarrow ~~ h_{11}=-2(\alpha^2+\beta^2), ~~ h_{10}=0, ~~ h_{01}=(\alpha^2-\beta^2)^2, ~~ h_{00}=0 \\
			& ~~ \Rightarrow ~~ u^4+u^2v^2-2(\alpha^2+\beta^2)u^2+(\alpha^2-\beta^2)^2=0 \\
			& ~~ \Rightarrow ~~ v^2 = 2\alpha^2+2\beta^2-u^2-\dfrac{(\alpha^2-\beta^2)^2}{u^2}
		\end{aligned}
	\end{equation*}
	Here we could see that $v^2$ reaches the maximum $2\alpha^2+2\beta^2-2|\alpha^2-\beta^2|$ when $u^2=|\alpha^2-\beta^2|$.
	\item deltoid II
	\begin{equation*}
		\begin{aligned}
			\alpha=\beta, ~~ \delta=\gamma & ~~ \Rightarrow ~~ h_{11}=-2(\beta^2+\gamma^2), ~~ h_{10}=(\beta^2-\gamma^2)^2, ~~ h_{01}=0, ~~ h_{00}=0 \\
			& ~~ \Rightarrow ~~ v^4+v^2u^2-2(\beta^2+\gamma^2)v^2+(\beta^2-\gamma^2)^2=0 \\
			& ~~ \Rightarrow ~~ u^2 = 2\beta^2+2\gamma^2-v^2-\dfrac{(\beta^2-\gamma^2)^2}{v^2}
		\end{aligned}
	\end{equation*}
	Here we could see that $u^2$ reaches the maximum $2\beta^2+2\gamma^2-2|\beta^2-\gamma^2|$ when $v^2=|\beta^2-\gamma^2|$.
	\item Orthodiagonal
	\begin{equation*}
		\begin{gathered}
			h_{11}=-2(\alpha^2+\gamma^2) \\
			h_{10}=(\beta^2-\gamma^2)^2 \\
			h_{01}=(\alpha^2-\beta^2)^2 \\
			h_{00}=0 \\
		\end{gathered}
	\end{equation*}
\end{enumerate}
hence $u$ and $v$ are separable:
\begin{equation*}
	h(\alpha, ~\beta, ~\gamma, ~\delta, ~u, ~ v)=u^2+\dfrac{(\beta^2-\gamma^2)^2}{u^2}+v^2+\dfrac{(\alpha^2-\beta^2)^2}{v^2}-2(\alpha^2+\gamma^2) = 0
\end{equation*}

An important conclusion below is derived from Equation \eqref{eq: relation between diagonals planar}:
\begin{prop}
	There is a one-to-one correspondence between the two planar 4-bar linkages below:
	\begin{equation}
		(\alpha, ~\beta, ~\gamma, ~\delta, ~u, ~v) \Leftrightarrow (\sigma-\alpha, ~\sigma-\beta, ~\sigma-\gamma, ~\sigma-\delta, ~u, ~v)
	\end{equation}
	That is to say, for every planar 4-bar linkage with linkage lengths $\alpha, ~\beta, ~\gamma, ~\delta$ and diagonal lengths $u, ~v$, there is another planar 4-bar linkage with linkage lengths $\sigma-\alpha, ~\sigma-\beta, ~\sigma-\gamma, ~\sigma-\delta$ and the same diagonal lengths $u, ~v$. We say they are \textit{conjugate} planar 4-bar linkages.
\end{prop}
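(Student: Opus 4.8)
The plan is to realise the conjugation as the substitution map $\Phi \colon (\alpha,\beta,\gamma,\delta) \mapsto (\sigma-\alpha, \sigma-\beta, \sigma-\gamma, \sigma-\delta)$ and to establish two facts: that $\Phi$ carries valid linkage lengths to valid linkage lengths, and that it leaves the diagonal polynomial $h$ of Equation \eqref{eq: relation between diagonals planar} unchanged. The first step is to note that $\Phi$ preserves the semi-perimeter, since the four new lengths sum to $4\sigma - (\alpha+\beta+\gamma+\delta) = 2\sigma$. Consequently $\sigma - \alpha > 0$ is equivalent to $\alpha < \beta+\gamma+\delta$, which is exactly one of the inequalities of Proposition \ref{prop: linkage length range}, and likewise for the other three; since the semi-perimeter is unchanged the image again satisfies Proposition \ref{prop: linkage length range}. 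Because $\sigma - (\sigma-\alpha) = \alpha$, the map $\Phi$ is an involution, hence a bijection on the set of valid linkages, which is what makes the correspondence one-to-one.

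The core of the argument is to verify, coefficient by coefficient, that $h(\sigma-\alpha,\sigma-\beta,\sigma-\gamma,\sigma-\delta,u,v) = h(\alpha,\beta,\gamma,\delta,u,v)$ as polynomials in $u,v$. Writing $a=\sigma-\alpha$, $b=\sigma-\beta$, $c=\sigma-\gamma$, $d=\sigma-\delta$, the coefficient $h_{11}$ is immediate from the identity $a^2+b^2+c^2+d^2 = \alpha^2+\beta^2+\gamma^2+\delta^2$, which drops out of expanding $(\sigma-\alpha)^2$ and using $\alpha+\beta+\gamma+\delta = 2\sigma$. For $h_{10}$ and $h_{01}$ I would use difference-of-squares factorisations together with the complementarity identities $2\sigma - \beta-\gamma = \alpha+\delta$ and $2\sigma - \alpha-\beta = \gamma+\delta$; for instance $b^2 - c^2 = (\gamma-\beta)(\alpha+\delta)$ and $d^2 - a^2 = (\alpha-\delta)(\beta+\gamma)$, whose product reproduces $(\beta^2-\gamma^2)(\delta^2-\alpha^2)$ once the four sign changes are paired off.

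The constant term $h_{00}$ is the main obstacle, because neither of its two factors is individually invariant under $\Phi$. The trick is to factor $a^2c^2 - b^2d^2 = (ac-bd)(ac+bd)$ and to handle the two pieces separately, where one meets the pleasant simplifications $ac+bd = \alpha\gamma+\beta\delta$ and $ac-bd = -\tfrac12(\alpha^2-\beta^2+\gamma^2-\delta^2)$, both again forced by $\alpha+\beta+\gamma+\delta=2\sigma$. Meanwhile the second factor transforms as $a^2-b^2+c^2-d^2 = -2(\alpha\gamma-\beta\delta)$. Multiplying these, the stray $-\tfrac12$ and $-2$ cancel and the recombination $(\alpha\gamma+\beta\delta)(\alpha\gamma-\beta\delta) = \alpha^2\gamma^2-\beta^2\delta^2$ restores exactly $h_{00}=(\alpha^2\gamma^2-\beta^2\delta^2)(\alpha^2-\beta^2+\gamma^2-\delta^2)$.

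With all four coefficients invariant, the two linkages obey the same polynomial relation between $u^2$ and $v^2$, so they admit the same pairs of diagonal lengths. As a final check that the \emph{realisable} ranges, and not merely the algebraic relation, coincide, I would note that Proposition \ref{prop: linkage length range} confines the original linkage to $u \in (\max(|\alpha-\beta|,|\gamma-\delta|),\ \min(\alpha+\beta,\gamma+\delta))$, and that under $\Phi$ the upper bounds become $(\sigma-\alpha)+(\sigma-\beta)=\gamma+\delta$ and $(\sigma-\gamma)+(\sigma-\delta)=\alpha+\beta$, which merely swaps the two upper limits while leaving the lower limits $|\alpha-\beta|,|\gamma-\delta|$ fixed; the admissible interval for $u$, and symmetrically for $v$, is therefore identical for both linkages, completing the correspondence.
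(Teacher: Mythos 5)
Your proposal is correct and follows essentially the same route as the paper, which likewise establishes the correspondence by observing (via the identities of Proposition \ref{prop: identities}) that the coefficients $h_{11}$, $h_{10}$, $h_{01}$, $h_{00}$ of Equation \eqref{eq: relation between diagonals planar} are preserved under the conjugation. You have simply written out the symbolic calculation the paper leaves implicit, and your added checks (that the image lengths satisfy Proposition \ref{prop: linkage length range}, that the map is an involution, and that the admissible intervals for $u$ and $v$ coincide) are all correct.
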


Actually, from direct symbolic calculations with the helpful identities provided in Proposition \ref{prop: identities} below, we could see that the coefficients $h_{11}, ~h_{10}, ~h_{01}, ~h_{00}$ preserve when switching a planar 4-bar linkage to its conjugate.

\section{Helpful identities, sign convention and switching a strip} \label{section: sign convention}

In this section we will provide some helpful identities in the study of the above polynomial relations.

\begin{prop} \label{prop: identities}
	Some identities over $\alpha, ~\beta, ~\gamma, ~ \delta$ and $\sigma-\alpha, ~\sigma-\beta, ~ \sigma-\gamma, ~\sigma-\delta$. Note that $\sigma=(\alpha+\beta+\gamma+\delta)/2$ is the semi-perimeter.
	\begin{enumerate} [label={[\arabic*]}]
		\item 
		\begin{equation*}
			\sigma-\alpha-\beta =  -(\sigma-\gamma-\delta)
		\end{equation*}
		\item
		\begin{equation*}
			\begin{aligned}
				(\sigma-\alpha)(\sigma-\beta)-\alpha\beta & = \sigma(\sigma-\alpha-\beta) \\
				& = -\sigma(\sigma-\gamma-\delta) \\
				& =  \gamma\delta-(\sigma-\gamma)(\sigma-\delta)
			\end{aligned}
		\end{equation*} 
		\item
		\begin{equation*}
			\begin{aligned}
				(\sigma-\alpha)(\sigma-\beta)-\gamma\delta & = (\sigma-\alpha)(\sigma-\beta)-(\gamma+\delta)\gamma+ \gamma^2 \\
				& = (\sigma-\alpha)(\sigma-\beta)-(2\sigma-\alpha-\beta)\gamma+ \gamma^2 \\
				& = (\sigma-\alpha-\gamma)(\sigma-\beta-\gamma) \\
				& = -(\sigma-\gamma-\alpha)(\sigma-\delta-\alpha) \\
				& = \alpha\beta-(\sigma-\gamma)(\sigma-\delta)
			\end{aligned}
		\end{equation*}
		\item 
		\begin{equation*}
			\begin{gathered}
				\alpha + \beta = (\sigma-\gamma) + (\sigma - \delta) \\
				\alpha - \beta = (\sigma-\beta) - (\sigma - \alpha) \\
				\alpha \beta + \gamma \delta = (\sigma-\alpha)(\sigma-\beta) + (\sigma-\gamma)(\sigma-\delta)
			\end{gathered} 
		\end{equation*}
		\item 
		\begin{equation*}
			\begin{aligned}
				\alpha \beta \gamma \delta - (\sigma-\alpha)(\sigma-\beta)(\sigma-\gamma)(\sigma-\delta) & = [(\sigma-\alpha)(\sigma-\beta)+(\sigma-\gamma)(\sigma-\delta)-\gamma\delta]\gamma\delta \\ & \quad -(\sigma-\alpha)(\sigma-\beta)(\sigma-\gamma)(\sigma-\delta) \\ 
				& = ((\sigma-\alpha)(\sigma-\beta)-\gamma\delta)(\gamma\delta-(\sigma-\gamma)(\sigma-\delta)) \\
				& = \sigma(\sigma-\alpha-\beta)(\sigma-\alpha-\gamma)(\sigma-\beta-\gamma)			
			\end{aligned}
		\end{equation*}
		\item 
		\begin{equation*}
			\begin{aligned}
				\alpha^2+\beta^2+\gamma^2+\delta^2 & = (\alpha + \beta + \gamma + \delta)^2- 2 \alpha \beta - 2 \alpha \gamma - 2 \alpha \delta -2 \beta \gamma -2 \beta \delta -2 \gamma \delta  \\
				& = [(\sigma-\alpha) + (\sigma-\beta) + (\sigma-\gamma) + (\sigma-\delta)]^2 \\ 
				& \quad ~~ - 2 (\sigma-\alpha) (\sigma-\beta) - 2 (\sigma-\alpha) (\sigma-\gamma) - 2 (\sigma-\alpha) (\sigma-\delta) \\ 
				& \quad ~~ -2 (\sigma-\beta) (\sigma-\gamma) -2 (\sigma-\beta) (\sigma-\delta) -2 (\sigma-\gamma) (\sigma-\delta) \\
				& =  (\sigma-\alpha)^2+(\sigma-\beta)^2+(\sigma-\gamma)^2+(\sigma-\delta)^2
			\end{aligned}
		\end{equation*}
		\item 
		\begin{equation*}
			\begin{aligned}
				\alpha \beta - \gamma \delta & = \dfrac{1}{2}[(\alpha+\beta)^2+(\gamma-\delta)^2-\alpha^2-\beta^2-\gamma^2-\delta^2] \\
				& = \dfrac{1}{2}[(\sigma-\gamma+\sigma-\delta)^2+(\sigma-\gamma-\sigma+\delta)^2-\alpha^2-\beta^2-\gamma^2-\delta^2] \\
				& = \dfrac{1}{2}[2(\sigma-\gamma)^2+2(\sigma-\delta)^2-\alpha^2-\beta^2-\gamma^2-\delta^2] \\
				& = \dfrac{1}{2}[-(\sigma-\alpha)^2-(\sigma-\beta)^2+(\sigma-\gamma)^2+(\sigma-\delta)^2]
			\end{aligned}
		\end{equation*}
		\item When $\alpha^2 + \gamma^2 = \beta^2 + \delta^2$:
		\begin{equation*}
			\begin{gathered}
				\begin{aligned}
					(\sigma-\beta)(\sigma-\beta-\delta) & = \dfrac{1}{4} \left((\alpha+\gamma-\beta)^2 - \delta^2 \right) \\
					& = \dfrac{1}{4} \left((\alpha+\gamma)^2-2\beta(\alpha+\gamma)+\beta^2 - \delta^2 \right) \\
					& = \dfrac{1}{2}(\beta - \alpha)(\beta - \gamma)
				\end{aligned} \\
				(\sigma-\alpha)(\sigma-\alpha-\delta) = \dfrac{1}{2}(\beta - \alpha)(\beta + \gamma) \\
				(\sigma-\gamma)(\sigma-\gamma-\delta) = \dfrac{1}{2}(\beta + \alpha)(\beta - \gamma) \\
				\sigma(\sigma-\delta) = \dfrac{1}{2}(\beta + \alpha)(\beta + \gamma) \\
				(\sigma - \alpha)(\sigma - \gamma) = \dfrac{1}{2}(\alpha \gamma + \beta \delta) = (\sigma - \beta)(\sigma - \delta)
			\end{gathered}
		\end{equation*}
	\end{enumerate}
	These identities also hold for all the permutations over $\alpha, ~\beta, ~\gamma, ~\delta$.
\end{prop}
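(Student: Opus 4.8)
The plan is to regard every item as a polynomial identity in $\mathbb{R}[\alpha,\beta,\gamma,\delta]$, obtained after substituting $\sigma=(\alpha+\beta+\gamma+\delta)/2$, so that each claimed equality is verified by expansion and comparison. Everything is driven by the single \emph{master relation} $2\sigma=\alpha+\beta+\gamma+\delta$, equivalently $(\sigma-\alpha)+(\sigma-\beta)+(\sigma-\gamma)+(\sigma-\delta)=2\sigma$. The second form shows that $\alpha\mapsto\sigma-\alpha,\ \beta\mapsto\sigma-\beta,\dots$ preserves $\sigma$ and is therefore an involution, which is exactly why the left/right symmetry displayed in [2], [3] and [7] is automatic once one side is checked.

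First I would dispatch [1] directly: $(\sigma-\alpha-\beta)+(\sigma-\gamma-\delta)=2\sigma-(\alpha+\beta+\gamma+\delta)=0$ by the master relation. For [2] I would expand $(\sigma-\alpha)(\sigma-\beta)=\sigma^2-\sigma(\alpha+\beta)+\alpha\beta$, cancel $\alpha\beta$, and factor out $\sigma$ to get $\sigma(\sigma-\alpha-\beta)$; the remaining equalities of [2] then follow from [1] and the same expansion applied to the pair $(\gamma,\delta)$. Item [3] is handled by the substitution $\gamma+\delta=2\sigma-\alpha-\beta$ indicated in the statement, which rewrites $\gamma\delta$ as a quadratic in $\gamma$ with parameters $\sigma,\alpha,\beta$ and completes the factorization $(\sigma-\alpha-\gamma)(\sigma-\beta-\gamma)$. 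In [4] the first two lines are immediate from the master relation, and the third is obtained by adding the [2]-identity for $(\alpha,\beta)$ to its counterpart for $(\gamma,\delta)$ and using [1] to annihilate the $\sigma$-terms.

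The one step needing genuine insight is [5]. Rather than expanding the degree-four difference $\alpha\beta\gamma\delta-(\sigma-\alpha)(\sigma-\beta)(\sigma-\gamma)(\sigma-\delta)$ blindly, I would substitute $\alpha\beta=(\sigma-\alpha)(\sigma-\beta)+(\sigma-\gamma)(\sigma-\delta)-\gamma\delta$ from the third line of [4], collect the terms carrying $\gamma\delta$, and recognise that the result factors as $\big((\sigma-\alpha)(\sigma-\beta)-\gamma\delta\big)\big(\gamma\delta-(\sigma-\gamma)(\sigma-\delta)\big)$, i.e.\ the product of the two quantities already factored in [3] and [2]. Feeding in those factorizations yields $\sigma(\sigma-\alpha-\beta)(\sigma-\alpha-\gamma)(\sigma-\beta-\gamma)$. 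Spotting this telescoping is the main obstacle; once [2] and [3] are in hand, however, it is forced.

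Finally, [6] and [7] are pure symmetric-function computations. For [6] I would simply expand $\sum_i(\sigma-\alpha_i)^2=4\sigma^2-2\sigma\sum_i\alpha_i+\sum_i\alpha_i^2$ and note that $\sum_i\alpha_i=2\sigma$ makes the first two terms cancel, leaving $\sum_i\alpha_i^2$; so [6] follows from the master relation alone. For [7] I would start from the polarization $\alpha\beta-\gamma\delta=\tfrac12\big((\alpha+\beta)^2+(\gamma-\delta)^2-\alpha^2-\beta^2-\gamma^2-\delta^2\big)$, rewrite $\alpha+\beta$ and $\gamma-\delta$ through the first two lines of [4], and substitute [6]. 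Item [8] is the orthodiagonal specialization: each of the four products equals $\tfrac14$ times a difference of squares (unconditionally, e.g.\ $(\sigma-\beta)(\sigma-\beta-\delta)=\tfrac14((\alpha+\gamma-\beta)^2-\delta^2)$), and imposing $\alpha^2+\gamma^2=\beta^2+\delta^2$ to eliminate $\delta^2$ collapses each into the stated bilinear form; the only care needed is to carry out that single elimination consistently, and the same substitution gives $(\sigma-\alpha)(\sigma-\gamma)=(\sigma-\beta)(\sigma-\delta)=\tfrac12(\alpha\gamma+\beta\delta)$. Because the master relation is symmetric in $\alpha,\beta,\gamma,\delta$, relabelling the variables leaves every derivation intact, which is precisely the closing claim that all the identities persist under permutations.
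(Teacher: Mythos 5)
Your proposal is correct and follows essentially the same route as the paper, whose "proof" consists of the intermediate lines already displayed in each chain of equalities: everything reduces to the master relation $2\sigma=\alpha+\beta+\gamma+\delta$, with [5] obtained by the same telescoping $[A+B-c]c-AB=(A-c)(c-B)$ fed by the factorizations in [2] and [3], and [8] by the same difference-of-squares reduction followed by eliminating $\delta^2$ via $\alpha^2+\gamma^2=\beta^2+\delta^2$. Your added observation that $\alpha\mapsto\sigma-\alpha$, etc., is a $\sigma$-preserving involution is a clean way to package the symmetry the paper uses implicitly, but it does not change the argument.
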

We will follow the sign convention below when calculating the square root.
\begin{equation} \label{eq: sqrt sign convention planar}
	\sqrt{a}=
	\begin{cases}
		\begin{aligned}
			& \sqrt{a} ~(a \ge 0) \\
			& i \sqrt{-a} ~(a<0)
		\end{aligned}
	\end{cases}
\end{equation}
At the end of this section we introduce the following important transformation called `switching a strip' (this name is motivated from what Prof. Ivan Izmestiev proposed for a spherical 4-bar linkage):
\begin{prop} \label{prop: switch a strip planar}
	One-to-one correspondence between two spherical quadrilaterals if allowing the linkage length to be negative. They actually have the same trajectory when drawing on a plane.
	\begin{enumerate} [label={[\arabic*]}]
		\item $(\alpha, ~\beta, ~\gamma, ~\delta, ~x, ~y, ~z, ~w) \Leftrightarrow (\alpha, ~\beta, ~-\gamma, ~-\delta, ~x, ~-y^{-1}, ~-z, ~-w^{-1})$
		\item $(\alpha, ~\beta, ~\gamma, ~\delta, ~x, ~y, ~z, ~w) \Leftrightarrow (-\alpha, ~\beta, ~\gamma, ~-\delta, ~-x^{-1}, ~y, ~-z^{-1}, ~-w)$ 
		\item $(\alpha, ~\beta, ~\gamma, ~\delta, ~x, ~y, ~z, ~w) \Leftrightarrow (-\alpha, ~ - \beta, ~\gamma, ~\delta, ~-x, ~-y^{-1}, ~z, ~-w^{-1})$ 
		\item $(\alpha, ~\beta, ~\gamma, ~\delta, ~x, ~y, ~z, ~w) \Leftrightarrow (\alpha, ~ - \beta, ~ - \gamma, ~\delta, ~-x^{-1}, ~-y, ~-z^{-1}, ~w)$ 
	\end{enumerate}
\end{prop}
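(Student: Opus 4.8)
The plan is to show that each of the four listed substitutions is an involution that carries the entire system of defining equations of the configuration space onto itself, so that the two linkages are cut out by identical relations and therefore trace the same planar curve. A planar 4-bar linkage is pinned down by the adjacent relation \eqref{eq: planar 4-bar linkage} $f(\alpha,\beta,\gamma,\delta,x,y)=0$, its companion \eqref{eq: planar 4-bar linkage 2} $f(\beta,\alpha,\delta,\gamma,x,w)=0$, the opposite relation \eqref{eq: opposite rotational angle} $g(\alpha,\beta,\gamma,\delta,x,z)=0$, and the diagonal relation \eqref{eq: relation between diagonals planar}. It therefore suffices to verify that each transformation preserves the zero locus of all four polynomials. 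I would prove case [1] in full and deduce [2]--[4] from it by the dihedral relabelling of the quadrilateral that rotates which adjacent edge-pair is negated.

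For case [1] (negating $\gamma,\delta$ and sending $y\mapsto -y^{-1}$), I would first track the coefficients of $f$ under $\gamma\mapsto-\gamma$, $\delta\mapsto-\delta$. Using the factored forms of $f_{22},f_{20},f_{11},f_{02},f_{00}$, an elementary sign count yields the swap pattern $\tilde f_{22}=f_{20}$, $\tilde f_{20}=f_{22}$, $\tilde f_{11}=-f_{11}$, $\tilde f_{02}=f_{00}$, $\tilde f_{00}=f_{02}$. Substituting $y\mapsto -y^{-1}$ into $\tilde f$ and clearing denominators then gives $y^{2}\,\tilde f(\alpha,\beta,-\gamma,-\delta,x,-y^{-1})=f(\alpha,\beta,\gamma,\delta,x,y)$, so the two adjacent relations have identical zero sets. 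The reciprocal in $y$ is exactly what compensates the coefficient swaps $f_{22}\leftrightarrow f_{20}$ and $f_{02}\leftrightarrow f_{00}$, while the single sign flip matches the $-y^{-1}$; the endpoints $y\in\{0,\infty\}$ are handled through the one-point compactification.

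Next I would dispatch the remaining relations. For the opposite relation, the analogous sign count shows that $g_{22},g_{20},g_{02},g_{00}$ are each individually invariant under $\gamma\mapsto-\gamma,\delta\mapsto-\delta$; since $z\mapsto -z$ leaves $z^{2}$ fixed, $g$ is preserved verbatim. The $x$--$w$ relation \eqref{eq: planar 4-bar linkage 2} has the same shape as \eqref{eq: planar 4-bar linkage} under the relabelling $(\alpha,\beta,\gamma,\delta)\mapsto(\beta,\alpha,\delta,\gamma)$, so the computation for $f$ applies verbatim to give $w^{2}\tilde f=f$. The diagonal relation \eqref{eq: relation between diagonals planar} involves the lengths only through their squares and does not involve the angles, so negating $\gamma,\delta$ leaves it unchanged; checking the law-of-cosines forms $u^{2}=\alpha^{2}+\beta^{2}+2\alpha\beta\cos\rho_x=\gamma^{2}+\delta^{2}+2\gamma\delta\cos\rho_z$ (and the pair for $v$) confirms $u,v$ are fixed, which is the precise content of "same trajectory." Finally I would record that each substitution squares to the identity, e.g. $y\mapsto-y^{-1}\mapsto y$ and $\gamma\mapsto-\gamma\mapsto\gamma$, so it is a genuine one-to-one correspondence.

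I expect the main obstacle to be organisational rather than conceptual: keeping the sign and index bookkeeping consistent across four transformations and four relations at once, and in particular matching the reciprocal-and-sign substitution on each angle to the correct permutation of coefficients after denominators are cleared. The identities in Proposition \ref{prop: identities} should streamline this by letting me rewrite each transformed coefficient in terms of the original factors without expanding, so that the swaps $\tilde f_{22}=f_{20}$ and $\tilde g_{ij}=g_{ij}$ can be read off directly.
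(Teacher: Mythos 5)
The paper states Proposition \ref{prop: switch a strip planar} without any proof, so there is nothing to compare against directly; judged on its own, your verification is correct and complete. The coefficient bookkeeping checks out: under $\gamma\mapsto-\gamma,\ \delta\mapsto-\delta$ one indeed gets $\tilde f_{22}=f_{20}$, $\tilde f_{20}=f_{22}$, $\tilde f_{02}=f_{00}$, $\tilde f_{00}=f_{02}$, $\tilde f_{11}=-f_{11}$, so $y^{2}\tilde f(\alpha,\beta,-\gamma,-\delta,x,-y^{-1})=f(\alpha,\beta,\gamma,\delta,x,y)$; all four $g_{ij}$ are individually invariant; the $x$--$w$ and diagonal relations follow as you say; and the dihedral relabelling carries case [1] to [2]--[4]. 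The only thing I would add is the geometric observation that makes the clause ``same trajectory when drawing on a plane'' immediate rather than inferred from the invariance of $u,v$: in half-angle-tangent coordinates $y\mapsto-y^{-1}$ is exactly $\rho_y\mapsto\rho_y+\pi$, and a bar of length $-\gamma$ attached at angle $\rho_y+\pi$ terminates at the same point as a bar of length $\gamma$ at angle $\rho_y$, so the joint positions (hence the traced curve) are literally unchanged vertex by vertex. That one-line geometric argument also explains \emph{why} the coefficient swaps must occur, and is presumably the intended content of the proposition; your algebraic route proves the same thing at the cost of more bookkeeping but with the benefit of confirming the stated polynomial identities explicitly.
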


\section{Post-examination, solutions at infinity}

Generically, given a value of $x$ we will have two solutions of $y$ from Equation \eqref{eq: planar 4-bar linkage}, two solutions of $z$ from Equation \eqref{eq: planar 4-bar linkage 2}, and two solutions of $w$ from Equation \eqref{eq: opposite rotational angle}. We need to figure out which tuple of $x, ~y, ~z, ~w$ is the actual solution among the possible 8 choices. The most efficient way, as far as we know, would be post-examination using the relations between $z$ and $y$; $z$ and $w$; and $y$ and $w$. They can be written explicitly by applying a permutation on $\alpha, ~\beta, ~\gamma, ~\delta$.

Our discussion in previous sections are not sufficient when talking about solutions at infinity for a polynomial system. In order for a complete discussion, it is necessary to bihomogenize Equations \eqref{eq: planar 4-bar linkage}, \eqref{eq: planar 4-bar linkage 2} and \eqref{eq: opposite rotational angle}.

Let 
\begin{equation*}
	x=\dfrac{x_1}{x_2}, ~~ y=\dfrac{y_1}{y_2} 
\end{equation*}
then equations \eqref{eq: planar 4-bar linkage}, \eqref{eq: planar 4-bar linkage 2} and \eqref{eq: opposite rotational angle} become
\begin{equation} \label{eq: planar 4-bar linkage homo}
	f(\alpha, ~\beta, ~\gamma, ~\delta, ~x_1, ~x_2, ~y_1, ~y_2) = f_{22} x_1^2y_1^2 + f_{20} x_1^2y_2^2 + 2f_{11} x_1y_1x_2y_2 + f_{02} x_1^2y_2^2 + f_{00} x_2^2y_2^2
\end{equation}
\begin{equation} \label{eq: opposite rotational angle homo}
	g(\alpha, ~\beta, ~\gamma, ~\delta, ~x_1, ~x_2, ~z_1, ~z_2)=g_{22} x_1^2z_1^2 + g_{20} x_1^2z_2^2 + g_{02} z_1^2x_2^2 + g_{00}x_2^2z_2^2=0
\end{equation}
\begin{equation*}
	x_1, ~x_2, ~y_1, ~y_2, ~z_1, ~z_2, ~w_1, ~w_2 \in \mathbb{R}
\end{equation*}
For example, to check if there is a solution of $x$ at infinity, we could let $x_1 \neq 0, ~~ x_2=0$. 

The sections below will be a case-by-case discussion on the finite solutions and solutions at infinity. 

\section{Rhombus: finite solution}

The condition on linkage lengths is 
\begin{equation*}
	f_{22}=0, ~~ f_{20}=0,~~f_{02}=0 ~~ \Leftrightarrow ~~  \alpha=\beta=\gamma=\delta
\end{equation*}
which means:
\begin{equation*}
	\begin{aligned}
		\begin{dcases}
			xy = 1 \\
			z = \pm x \\
			xw = 1 
		\end{dcases} ~~ \Rightarrow ~~ & \begin{dcases}
			y = \dfrac{1}{x} \\
			z = \pm x \\
			w = \dfrac{1}{x} 
		\end{dcases}
	\end{aligned}
\end{equation*}
From post-examination, there is a single branch of flex diffeomorphic to a circle $S^1$ with the parametrization below:
\begin{equation*}
	\begin{dcases}
		x \in \mathbb{R} \cup \{\infty\} \\
		y = \dfrac{1}{x} \\
		z = x \\
		w = \dfrac{1}{x} 
	\end{dcases} 
\end{equation*}
This branch exactly has the shape of a rhombus.

\section{Isogram: finite solution}

The condition on linkage lengths is 	
\begin{equation*}
	f_{22} \neq 0, ~~ f_{20}=0,~~f_{02}=0 ~~ \Leftrightarrow ~~ \gamma=\alpha, ~~ \delta=\beta, ~~\beta \neq \alpha
\end{equation*}
which means:
\begin{equation*}
	\begin{aligned}
		\begin{cases}
			(\alpha-\beta)x^2y^2-2\alpha xy+(\alpha+\beta)=0 \\
			z = \pm x \\
			(\beta-\alpha)x^2z^2-2\beta xw+(\alpha+\beta) =0 
		\end{cases} 
		~~ \Rightarrow ~~ \begin{dcases}
			y = \dfrac{1}{x} ~~ \mathrm{or} ~~ y = \dfrac{\alpha+\beta}{(\alpha-\beta)x} \\
			z = \pm x \\
			w = \dfrac{1}{x} ~~ \mathrm{or} ~~ w = \dfrac{\beta+\alpha}{(\beta-\alpha)x} \\
		\end{dcases} 
	\end{aligned}
\end{equation*}
From post-examination, there are only two branches. Each branch will be diffeomorphic to a circle $S^1$ with the parametrization below:
\begin{equation*}
	\begin{dcases}
		x \in \mathbb{R} \cup \{\infty\} \\
		y = \dfrac{1}{x} \\
		z = x \\
		w = \dfrac{1}{x} 
	\end{dcases} \quad \mathrm{or} \quad \begin{dcases}
		x \in \mathbb{R} \cup \{\infty\} \\
		y = \dfrac{\alpha+\beta}{(\alpha-\beta)x} \\
		z = -x \\
		w = \dfrac{\beta+\alpha}{(\beta-\alpha)x} = -y
	\end{dcases}	
\end{equation*}
The first branch has a non-self-intersecting convex `car wiper' motion, while the second branch has a constantly self-intersecting `butterfly' shape.  

\section{Deltoid I: finite solution}

The condition on linkage lengths is
\begin{equation*}
	f_{22}=0, ~~ f_{20} \neq 0, ~~f_{02}=0 ~~ \Leftrightarrow ~~ \delta=\alpha, ~~ \gamma=\beta, ~~\beta \neq \alpha
\end{equation*}
which means:
\begin{equation*}
	\begin{aligned}
		\begin{cases}
			(\beta-\alpha)x^2-2\alpha xy+(\beta+\alpha) =0 \\
			z = \pm x \\
			(\alpha-\beta)x^2-2\beta xw+(\alpha+\beta) =0 
		\end{cases} 
		~~ \Rightarrow ~~  \begin{dcases}
			y = \dfrac{(\beta-\alpha)x^2+(\beta+\alpha)}{2\alpha x} \\
			z = \pm x \\
			w = \dfrac{(\alpha-\beta)x^2+(\alpha+\beta)}{2\beta x}
		\end{dcases} 
	\end{aligned}
\end{equation*}
From post-examination, there is only a single branch. Each branch will be diffeomorphic to a circle $S^1$ with the parametrization below:
\begin{equation*}
	\begin{dcases}
		x \in \mathbb{R} \cup \{\infty\} \\
		y = \dfrac{(\beta-\alpha)x^2+(\beta+\alpha)}{2\alpha x} \\
		z = x \\
		w = \dfrac{(\alpha-\beta)x^2+(\alpha+\beta)}{2\beta x}
	\end{dcases} 
\end{equation*}
This branch has a non-self-intersecting motion.

\section{Deltoid II: finite solution}

The condition on linkage lengths is 	
\begin{equation*}
	f_{22}=0, ~~ f_{20}=0 , ~~ f_{02} \neq 0 ~~ \Leftrightarrow ~~ \alpha=\beta, ~~ \delta=\gamma, ~~\gamma \neq \beta
\end{equation*}
which means:
\begin{equation*}
	\begin{aligned}
		\begin{cases}
			(\beta-\gamma)y^2-2\gamma xy +(\beta+\gamma) =0 \\
			\beta^2 (z^2 +1) = \gamma^2 (x^2+1) \\
			(\beta-\gamma)w^2-2\gamma xw+(\beta+\gamma) =0 
		\end{cases} 
		\Rightarrow ~~ \begin{dcases}
			y = \dfrac{\gamma x \pm \sqrt{\gamma^2(x^2+1)-\beta^2}}{\beta-\gamma} \\
			z = \pm \dfrac{\sqrt{\gamma^2(x^2+1)-\beta^2}}{\beta} \\
			w = \dfrac{\gamma x \pm \sqrt{\gamma^2(x^2+1)-\beta^2}}{\beta-\gamma}
		\end{dcases} 
	\end{aligned}
\end{equation*}
Only when $x^2 \ge \dfrac{\beta^2}{\gamma^2}-1$ is there a real solution. From post-examination, there are only two solutions:
\begin{equation} \label{eq: planar deltoid II}
	\begin{dcases}
		y = \dfrac{\gamma x + \sqrt{\gamma^2(x^2+1)-\beta^2}}{\beta-\gamma} \\
		z = - \dfrac{\sqrt{\gamma^2(x^2+1)-\beta^2}}{\beta} \\
		w = \dfrac{\gamma x + \sqrt{\gamma^2(x^2+1)-\beta^2}}{\beta-\gamma}
	\end{dcases} ~~ \mathrm{or} ~~
	\begin{dcases}
		y = \dfrac{\gamma x - \sqrt{\gamma^2(x^2+1)-\beta^2}}{\beta-\gamma} \\
		z =  \dfrac{\sqrt{\gamma^2(x^2+1)-\beta^2}}{\beta} \\
		w = \dfrac{\gamma x - \sqrt{\gamma^2(x^2+1)-\beta^2}}{\beta-\gamma}
	\end{dcases}
\end{equation}
where
\begin{equation*}
	\begin{aligned}
		\begin{dcases}
			x \in \left(-\infty, -\sqrt{\dfrac{\beta^2}{\gamma^2}-1} \right] \bigcup \left[\sqrt{\dfrac{\beta^2}{\gamma^2}-1}, +\infty \right) &  \mathrm{when} ~~ \beta>\gamma \\
			x \in \mathbb{R} &  \mathrm{when} ~~ \beta<\gamma
		\end{dcases}
	\end{aligned}
\end{equation*}

A more advanced expression is setting (referring to Subsection \ref{section: sign convention})
\begin{equation*}
	\begin{aligned}
		p_x=\sqrt{\dfrac{\beta^2}{\gamma^2}-1}, ~~
		\begin{dcases}
			x = \pm p_x \cosh s, ~~ s \in (0, +\infty) &  \mathrm{when} ~~ \beta>\gamma \\
			x = \pm i p_x \sinh s, ~~ s \in (0, +\infty) &  \mathrm{when} ~~ \beta<\gamma
		\end{dcases}
	\end{aligned}
\end{equation*}
We set $s \in (0, +\infty)$ in order for sign confirmation when calculating the square root. 

When $\beta>\gamma$,
\begin{equation*}
	\begin{aligned}
		&\begin{dcases}
			x = \sqrt{\dfrac{\beta^2}{\gamma^2}-1} \cosh s \\
			y = \sqrt{\dfrac{\beta+\gamma}{\beta-\gamma}} e^{s} \\
			z = -\sqrt{1-\dfrac{\gamma^2}{\beta^2}} \sinh s \\
			w= \sqrt{\dfrac{\beta+\gamma}{\beta-\gamma}} e^{s}
		\end{dcases} 
		~~ \mathrm{and} ~~ \begin{dcases}
			x = -\sqrt{\dfrac{\beta^2}{\gamma^2}-1} \cosh s \\
			y = -\sqrt{\dfrac{\beta+\gamma}{\beta-\gamma}} e^{s} \\
			z = \sqrt{1-\dfrac{\gamma^2}{\beta^2}} \sinh s \\
			w= -\sqrt{\dfrac{\beta+\gamma}{\beta-\gamma}} e^{s}
		\end{dcases} \\
	\end{aligned}
\end{equation*}
\begin{equation*}
	\begin{aligned}
		& ~~ \mathrm{or} ~~ \begin{dcases}
			x = \sqrt{\dfrac{\beta^2}{\gamma^2}-1} \cosh s \\
			y = \sqrt{\dfrac{\beta+\gamma}{\beta-\gamma}} e^{-s} \\
			z = \sqrt{1-\dfrac{\gamma^2}{\beta^2}} \sinh s \\
			w= \sqrt{\dfrac{\beta+\gamma}{\beta-\gamma}} e^{-s}
		\end{dcases} 
		~~ \mathrm{and} ~~ \begin{dcases}
			x = -\sqrt{\dfrac{\beta^2}{\gamma^2}-1} \cosh s \\
			y = -\sqrt{\dfrac{\beta+\gamma}{\beta-\gamma}} e^{-s} \\
			z = -\sqrt{1-\dfrac{\gamma^2}{\beta^2}} \sinh s \\
			w= -\sqrt{\dfrac{\beta+\gamma}{\beta-\gamma}} e^{-s}
		\end{dcases} 
	\end{aligned} , ~~ s \in (0, +\infty)
\end{equation*}
When $\beta < \gamma$
\begin{equation*}
	\begin{aligned}
		& \begin{dcases}
			x = -\sqrt{1-\dfrac{\beta^2}{\gamma^2}} \sinh s \\
			y = \sqrt{\dfrac{\gamma+\beta}{\gamma-\beta}} e^{s} \\
			z = \sqrt{\dfrac{\gamma^2}{\beta^2}-1} \cosh s \\
			w= \sqrt{\dfrac{\gamma+\beta}{\gamma-\beta}} e^{s}
		\end{dcases} 
		~~ \mathrm{and} ~~ \begin{dcases}
			x = \sqrt{1-\dfrac{\beta^2}{\gamma^2}} \sinh s \\
			y = -\sqrt{\dfrac{\gamma+\beta}{\gamma-\beta}} e^{s} \\
			z = -\sqrt{\dfrac{\gamma^2}{\beta^2}-1} \cosh s \\
			w= -\sqrt{\dfrac{\gamma+\beta}{\gamma-\beta}} e^{s}
		\end{dcases}
	\end{aligned}
\end{equation*}
\begin{equation*}
	\begin{aligned}
		& ~~ \mathrm{or} ~~ \begin{dcases}
			x = \sqrt{1-\dfrac{\beta^2}{\gamma^2}} \sinh s \\
			y = \sqrt{\dfrac{\gamma+\beta}{\gamma-\beta}} e^{-s} \\
			z = \sqrt{\dfrac{\gamma^2}{\beta^2}-1} \cosh s \\
			w= \sqrt{\dfrac{\gamma+\beta}{\gamma-\beta}} e^{-s}
		\end{dcases}
		~~ \mathrm{and} ~~ \begin{dcases}
			x = -\sqrt{1-\dfrac{\beta^2}{\gamma^2}} \sinh s \\
			y = -\sqrt{\dfrac{\gamma+\beta}{\gamma-\beta}} e^{-s} \\
			z = -\sqrt{\dfrac{\gamma^2}{\beta^2}-1} \cosh s \\
			w= -\sqrt{\dfrac{\gamma+\beta}{\gamma-\beta}} e^{-s}
		\end{dcases} 
	\end{aligned} , ~~ s \in (0, +\infty)
\end{equation*}

Actually we could write the expressions in a more compact form: 

When $\beta>\gamma$:
\begin{equation*}
	\begin{dcases}
		x = \sqrt{\dfrac{\beta^2}{\gamma^2}-1} \cosh s \\
		y = \sqrt{\dfrac{\beta+\gamma}{\beta-\gamma}} e^{-s} \\
		z = \sqrt{1-\dfrac{\gamma^2}{\beta^2}} \sinh s \\
		w= \sqrt{\dfrac{\beta+\gamma}{\beta-\gamma}} e^{-s}
	\end{dcases}, ~~ \mathrm{and} ~~
	\begin{dcases}
		x = -\sqrt{\dfrac{\beta^2}{\gamma^2}-1} \cosh s \\
		y = -\sqrt{\dfrac{\beta+\gamma}{\beta-\gamma}} 	e^{-s} \\
		z = -\sqrt{1-\dfrac{\gamma^2}{\beta^2}} \sinh s \\
		w= -\sqrt{\dfrac{\beta+\gamma}{\beta-\gamma}} e^{-s}
	\end{dcases} , ~~ s \in (-\infty, 0) \cup (0, +\infty)
\end{equation*}
When $\gamma<\beta$:
\begin{equation*}
	\begin{dcases}
		x = \sqrt{1-\dfrac{\beta^2}{\gamma^2}} \sinh s \\
		y = \sqrt{\dfrac{\gamma+\beta}{\gamma-\beta}} e^{-s} \\
		z = \sqrt{\dfrac{\gamma^2}{\beta^2}-1} \cosh s \\
		w= \sqrt{\dfrac{\gamma+\beta}{\gamma-\beta}} e^{-s}
	\end{dcases} ~~\mathrm{and} ~~\begin{dcases}
		x = -\sqrt{1-\dfrac{\beta^2}{\gamma^2}} \sinh s \\
		y = -\sqrt{\dfrac{\gamma+\beta}{\gamma-\beta}} e^{-s} \\
		z = -\sqrt{\dfrac{\gamma^2}{\beta^2}-1} \cosh s \\
		w= -\sqrt{\dfrac{\gamma+\beta}{\gamma-\beta}} e^{-s}
	\end{dcases}, ~~ s \in (-\infty, 0) \cup (0, +\infty) 
\end{equation*}

Further, we want to do an \textbf{analytical continuation} to the above real solutions towards the one-point compactification of the complex field $\mathbb{C} \cup \{\infty\}$, which, as we will see later, is a more helpful and unified expression for future symbolic calculations. Let  
\begin{equation*}
	\begin{aligned}
		x=p_x \cos t  
	\end{aligned}
\end{equation*}
When $\beta>\gamma$, 
\begin{equation*}
	\begin{dcases}
		x = \sqrt{\dfrac{\beta^2}{\gamma^2}-1} \cos t \\
		y = \sqrt{\dfrac{\beta+\gamma}{\beta-\gamma}} e^{it} \\
		z = -i\sqrt{1-\dfrac{\gamma^2}{\beta^2}} \sin t = i\sqrt{1-\dfrac{\gamma^2}{\beta^2}} \cos \left(t+\dfrac{\pi}{2}\right) \\
		w= \sqrt{\dfrac{\beta+\gamma}{\beta-\gamma}} e^{it}
	\end{dcases} 	 
\end{equation*}
When $\beta < \gamma$,
\begin{equation*}
	\begin{dcases}
		x = i \sqrt{1-\dfrac{\beta^2}{\gamma^2}} \cos t \\
		y = \sqrt{\dfrac{\gamma+\beta}{\gamma-\beta}} e^{it} \\
		z = \sqrt{\dfrac{\gamma^2}{\beta^2}-1} \sin t = \sqrt{\dfrac{\gamma^2}{\beta^2}-1} \cos \left(t-\dfrac{\pi}{2} \right) \\
		w= \sqrt{\dfrac{\gamma+\beta}{\gamma-\beta}} e^{it}
	\end{dcases} 	 
\end{equation*}
By carefully choosing $t$ for real configurations, with the sign convention in Equation \eqref{section: sign convention}, it is possible to write the two branches in a unified form. Each branch will be diffeomorphic to $\mathbb{R}$ with the parametrization below: 
\begin{equation} \label{eq: planar deltoid 2 complex}
	\begin{dcases}
		x = p_x \cos t \\
		y = \sqrt{\dfrac{\beta+\gamma}{\beta-\gamma}} e^{it} \\
		z = p_z \cos \left(t+ \dfrac{\pi}{2}\right) \\
		w= \sqrt{\dfrac{\beta+\gamma}{\beta-\gamma}} e^{it}
	\end{dcases} 	 
\end{equation}
\bgroup
\def\arraystretch{2}
\begin{center}
	\begin{tabular}{|c|c|c|}
		\hline
		\makecell{Choice of $t$ for \\ real configurations} &  Branch 1 & Branch 2 \\ 
		\hline
		\makecell{$p_x \in \mathbb{R}^+$ \\ $p_z \in i\mathbb{R}^+$} & \makecell {$is$, $s \in (0, +\infty) $ \\ $\pi + is$, $s \in (0, +\infty)$ \\ when $xz \neq 0$, $xz>0$ \\ this branch is continuous at $x = \infty$ \\ and `snaps' at $x = 0$} & \makecell {$is$, $s \in (-\infty, 0)$ \\ $\pi + is$, $s \in (-\infty, 0)$ \\ when $xz \neq 0$, $xz>0$ \\ this branch is continuous at $x = \infty$ \\ and `snaps' at $x = 0$} \\
		\hline
		\makecell{$p_x \in i\mathbb{R}^+$ \\ $p_z \in \mathbb{R}^+$} & \makecell {$3\pi/2 + is$, $s \in (0, +\infty) $ \\ $\pi/2 + is$, $s \in (0, +\infty) $ \\ when $xz \neq 0$, $xz>0$ \\ this branch is continuous at $x = \infty$ \\ and `snaps' at $x = 0$} & \makecell {$\pi/2 + is$, $s \in (-\infty, 0)$ \\ $3\pi/2 + is$, $s \in (-\infty, 0)$ \\ when $xz \neq 0$, $xz<0$ \\ this branch is continuous at $x = \infty$ \\ and `snaps' at $x = 0$} \\  
		\hline      
	\end{tabular}
\end{center}
\egroup

There is no self-intersection for the two branches of flexes. Note that it is also possible to set $x = p_x \sin t$.

\section{Conic I: finite solution} \label{section: planar conic I}

The condition on linkage lengths is 	
\begin{equation*} 
	f_{22}=0, ~~ f_{20} \neq 0 , ~~ f_{02} \neq 0  ~~ \Leftrightarrow ~~ \begin{dcases}
		\alpha - \beta + \gamma - \delta = 0 \\
		\alpha - \beta - \gamma + \delta \neq 0 \\
		\alpha + \beta - \gamma - \delta \neq 0
	\end{dcases}
\end{equation*} 
which implies $\sigma=\alpha+\gamma=\beta+\delta$, and:
\begin{equation*} 
	\begin{aligned}
		\begin{cases}
			\alpha(\beta-\gamma)y^2-2\alpha\gamma xy+\gamma(\beta-\alpha)x^2+\beta(\alpha+\gamma) =0 \\
			\alpha \beta (z^2+1)=\gamma \delta (x^2+1) \\
			\beta(\alpha-\delta)w^2-2\beta\delta xw+\delta(\alpha-\beta)x^2+\alpha(\beta+\delta)=0
		\end{cases}
	\end{aligned}
\end{equation*}
First (referring to Subsection \ref{section: sign convention}),
\begin{equation*}
	\begin{aligned}
		\alpha \beta (z^2+1)=\gamma \delta (x^2+1) & ~~ \Leftrightarrow ~~ \gamma \delta x^2 - \alpha \beta z^2 = \alpha \beta - \gamma \delta
		\\
		& ~~ \Leftrightarrow ~~ \dfrac{\gamma \delta x^2}{\alpha \beta-\gamma \delta} +  \dfrac{\alpha \beta z^2}{\gamma \delta -\alpha \beta} = 1  \\
		& ~~ \Leftrightarrow ~~ \dfrac{x^2}{\dfrac{\alpha \beta}{\gamma \delta}-1} + \dfrac{z^2}{\dfrac{\gamma \delta}{\alpha \beta}-1} = 1 \\
		&  ~~ \Leftrightarrow ~~ \dfrac{x^2}{p_x^2} + \dfrac{z^2}{p_z^2} = 1
	\end{aligned}
\end{equation*}
This equation is always hyperbolic. We will start with parametrization in the real field. 

It is worth mentioning that the magnitude of $\alpha, ~ \beta, ~\gamma, ~\delta$ plays an important role in determining the signs of expressions below, as shown in the following proposition.
\begin{prop}
	When $\alpha + \gamma = \beta + \delta$,
	\begin{enumerate} [label={[\arabic*]}]
		\item $\beta = \mathrm{max}$ implies $\alpha \beta > \gamma \delta, ~ \beta \gamma > \delta \alpha, ~ \alpha \gamma > \beta \delta$; any two of $\alpha \beta > \gamma \delta, ~ \beta \gamma > \delta \alpha, ~ \alpha \gamma > \beta \delta$ implies $\beta = \mathrm{max}$.
		\begin{equation*}
			\alpha \beta > \gamma \delta, ~ \beta \gamma > \delta \alpha \Leftrightarrow p_x \in \mathbb{R}^+, ~p_y \in \mathbb{R}^+, ~p_z \in i\mathbb{R}^+, p_w \in i\mathbb{R}^+
		\end{equation*}
		\item $\alpha = \mathrm{max}$ implies $\alpha \beta > \gamma \delta, ~ \beta \gamma < \delta \alpha, ~ \alpha \gamma < \beta \delta$; any two of $\alpha \beta > \gamma \delta, ~ \beta \gamma < \delta \alpha, ~ \alpha \gamma < \beta \delta$ implies $\alpha = \mathrm{max}$.
		\begin{equation*}
			\alpha \beta > \gamma \delta, ~ \beta \gamma < \delta \alpha \Leftrightarrow p_x \in \mathbb{R}^+, ~p_y \in i\mathbb{R}^+, ~p_z \in i\mathbb{R}^+, p_w \in \mathbb{R}^+
		\end{equation*}
		\item $\delta = \mathrm{max}$ implies $\alpha \beta < \gamma \delta, ~ \beta \gamma < \delta \alpha, ~ \alpha \gamma > \beta \delta$; any two of $\alpha \beta < \gamma \delta, ~ \beta \gamma > \delta \alpha, ~ \alpha \gamma > \beta \delta$ implies $\delta = \mathrm{max}$.
		\begin{equation*}
			\alpha \beta < \gamma \delta, ~ \beta \gamma < \delta \alpha \Leftrightarrow p_x \in i\mathbb{R}^+, ~p_y \in i\mathbb{R}^+, ~p_z \in \mathbb{R}^+, p_w \in \mathbb{R}^+
		\end{equation*}
		\item $\gamma = \mathrm{max}$ implies $\alpha \beta < \gamma \delta, ~\beta \gamma > \delta \alpha , ~ \alpha \gamma < \beta \delta$; any two of $\alpha \beta < \gamma \delta, ~ \beta \gamma > \delta \alpha, ~ \alpha \gamma < \beta \delta$ implies $\beta = \mathrm{max}$.
		\begin{equation*}
			\alpha \beta < \gamma \delta, ~ \beta \gamma > \delta \alpha \Leftrightarrow p_x \in i\mathbb{R}^+, ~p_y \in \mathbb{R}^+, ~p_z \in \mathbb{R}^+, p_w \in i\mathbb{R}^+
		\end{equation*}
	\end{enumerate}
\end{prop}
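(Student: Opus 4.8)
The plan is to collapse every product inequality into a linear comparison of the four bar lengths by using the defining relation $\alpha+\gamma=\beta+\delta=:\sigma$, and then to read the signs of $p_x^2,p_y^2,p_z^2,p_w^2$ off the hyperbola normal forms. First I would substitute $\gamma=\sigma-\alpha$ and $\delta=\sigma-\beta$ (equivalently, specialize the identities of Proposition~\ref{prop: identities} to $\alpha+\gamma=\beta+\delta$) to obtain the factorizations
\[
\alpha\beta-\gamma\delta=\sigma(\alpha-\delta)=\sigma(\beta-\gamma),\qquad \beta\gamma-\delta\alpha=\sigma(\beta-\alpha)=\sigma(\gamma-\delta),
\]
together with $\alpha\gamma-\beta\delta=(\alpha-\beta)(\gamma-\beta)=(\alpha-\beta)(\delta-\alpha)$. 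Since $\sigma>0$, these turn each product comparison into a sign condition: $\alpha\beta\gtrless\gamma\delta\Leftrightarrow\beta\gtrless\gamma\Leftrightarrow\alpha\gtrless\delta$, $\ \beta\gamma\gtrless\delta\alpha\Leftrightarrow\beta\gtrless\alpha\Leftrightarrow\gamma\gtrless\delta$, and $\alpha\gamma\gtrless\beta\delta\Leftrightarrow(\alpha-\beta)(\gamma-\beta)\gtrless0$.

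The combinatorial core is the observation that, because $(\alpha,\gamma)$ and $(\beta,\delta)$ are complementary pairs of equal sum $\sigma$, a length is the maximum as soon as it beats one member of each pair. I would first prove $\beta=\max\Leftrightarrow\beta>\alpha\ \text{and}\ \beta>\gamma$, the missing inequality $\beta>\delta$ following from $2\beta>\alpha+\gamma=\sigma=\beta+\delta$; the analogous characterizations hold for $\alpha,\gamma,\delta$. Substituting the sign dictionary of the first step, $\beta=\max$ is exactly the pair $\{\alpha\beta>\gamma\delta,\ \beta\gamma>\delta\alpha\}$, and this pair forces the third inequality $\alpha\gamma>\beta\delta$ through the form $(\alpha-\beta)(\gamma-\beta)>0$. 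The ``any two implies'' clauses are then one-line sign deductions: for instance $\alpha\beta>\gamma\delta$ gives $\gamma-\beta<0$, so $(\alpha-\beta)(\gamma-\beta)>0$ forces $\alpha-\beta<0$, i.e.\ $\beta>\alpha$ as well. Items [2]--[4] follow verbatim after the cyclic relabeling $\alpha\to\beta\to\gamma\to\delta\to\alpha$; this relabeling also repairs the two evident typos, namely the terminal ``$\beta=\max$'' of item [4] (which should read ``$\gamma=\max$'') and the $\beta\gamma$ comparison in the ``any two'' clause of item [3] (which should match the strict inequalities of its own main implication).

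It remains to link these inequalities to the parameters. From Section~\ref{section: planar conic I} the opposite pair $(x,z)$ obeys $x^2/p_x^2+z^2/p_z^2=1$ with $p_x^2=\alpha\beta/(\gamma\delta)-1$ and $p_z^2=\gamma\delta/(\alpha\beta)-1$. Applying the same cyclic relabeling to the opposite-angle relation $g$---which is legitimate because it sends $\alpha-\beta+\gamma-\delta$ to its negative and hence preserves the conic~I condition---yields the companion relation $y^2/p_y^2+w^2/p_w^2=1$ with $p_y^2=\beta\gamma/(\delta\alpha)-1$ and $p_w^2=\delta\alpha/(\beta\gamma)-1$. Therefore $\operatorname{sign}(p_x^2)=\operatorname{sign}(\alpha\beta-\gamma\delta)=-\operatorname{sign}(p_z^2)$ and $\operatorname{sign}(p_y^2)=\operatorname{sign}(\beta\gamma-\delta\alpha)=-\operatorname{sign}(p_w^2)$, and the convention \eqref{eq: sqrt sign convention planar} sends a positive square to a positive real and a negative square to a positive imaginary. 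Reading off the four sign patterns reproduces each displayed equivalence.

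I expect the main obstacle to be the bookkeeping of step two rather than any single computation: one must verify that ``$X=\max$'' is equivalent to a \emph{specific} pair of product inequalities while the third is automatic, which hinges on the complementary-pair structure and on choosing the right one of the two factorizations of $\alpha\gamma-\beta\delta$ in each case. It is also worth recording at the outset that in the conic~I regime $\alpha\neq\beta$, $\beta\neq\gamma$, $\alpha\neq\delta$, and $\gamma\neq\delta$, so generically one bar is a strict maximum and the four cases are exhaustive and mutually exclusive.
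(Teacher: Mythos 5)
Your proposal is correct, and it actually supplies an argument where the paper gives none: the proposition is stated in the Conic~I section without a proof environment, with only the surrounding derivation (the hyperbola normal form for $(x,z)$ and the explicit amplitudes appearing in the $\beta=\max$ worked example) hinting at why it holds. Your route is the natural completion of that hint. The factorizations $\alpha\beta-\gamma\delta=\sigma(\beta-\gamma)=\sigma(\alpha-\delta)$, $\beta\gamma-\delta\alpha=\sigma(\beta-\alpha)=\sigma(\gamma-\delta)$ and $\alpha\gamma-\beta\delta=(\alpha-\beta)(\gamma-\beta)$ are exactly items [2] and [3] of Proposition~\ref{prop: identities} specialized to $\sigma=\alpha+\gamma=\beta+\delta$, and your complementary-pair observation ($\beta>\alpha$ and $\beta>\gamma$ already force $\beta>\delta$ because $2\beta>\alpha+\gamma=\beta+\delta$) is the one genuinely nontrivial step; it is what makes ``a specific pair of product inequalities'' equivalent to ``$X=\max$'' with the third inequality automatic. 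Your sign-chasing for the ``any two implies'' clauses checks out, as does the identification of $p_y^2=\beta\gamma/(\delta\alpha)-1$, $p_w^2=\delta\alpha/(\beta\gamma)-1$ via the cyclic relabeling of $g$ (which indeed preserves the Conic~I condition since it negates $\alpha-\beta+\gamma-\delta$), and the translation to $\mathbb{R}^+$ versus $i\mathbb{R}^+$ through the convention \eqref{eq: sqrt sign convention planar}. You are also right that the statement contains two typos --- the terminal ``$\beta=\max$'' in item [4] should be ``$\gamma=\max$'', and the ``any two'' list in item [3] should use $\beta\gamma<\delta\alpha$ to match its own main implication --- and your preliminary remark that Conic~I forces $\alpha\neq\beta$, $\beta\neq\gamma$, $\alpha\neq\delta$, $\gamma\neq\delta$ (hence a strict, unique maximum and mutually exclusive, exhaustive cases) is a point the paper never makes explicit but which the case split silently relies on.
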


Let us take one of the cases above as an example to show the derivation. When $\beta = \mathrm{max}$,
\begin{equation*}
	x = \pm \sqrt{\dfrac{\alpha \beta}{\gamma \delta}-1} \cosh s, \quad z = \pm \sqrt{1-\dfrac{\gamma \delta}{\alpha \beta}} \sinh s, ~~ s \in (0, +\infty)
\end{equation*}
Substitute $x$ into the quadratic equation with respect to $y$ and $w$, we have
\begin{equation*}
	\begin{gathered}
		\alpha(\beta-\gamma)y^2 \mp 2\alpha\gamma y \sqrt{\dfrac{\alpha \beta}{\gamma \delta}-1} \cosh s+ \dfrac{(\beta-\alpha)(\alpha \beta -\gamma \delta) \cosh^2 s}{\delta}+\beta(\alpha+\gamma) =0 \\
		\beta(\alpha-\delta)y^2 \mp 2\beta\delta y \sqrt{\dfrac{\alpha \beta}{\gamma \delta}-1} \cosh s+ \dfrac{(\alpha-\beta(\alpha \beta -\gamma \delta) \cosh^2 s}{\beta}+\delta(\beta+\delta) =0
	\end{gathered}
\end{equation*}
and the solutions are 
\begin{equation*}
	\begin{gathered}
		y= \sqrt{\dfrac{\beta+\delta}{\beta-\gamma}} \left( \pm \sqrt{\dfrac{\gamma}{\delta}}  \cosh s \pm \sqrt{\dfrac{\beta}{\alpha}} \sinh s \right) \\
		w= \sqrt{\dfrac{\alpha+\gamma}{\alpha-\delta}} \left( \pm \sqrt{\dfrac{\delta}{\gamma}}  \cosh s \pm \sqrt{\dfrac{\alpha}{\beta}} \sinh s \right) 
	\end{gathered}
\end{equation*}
Take one sign choice of $y$ and $w$ as an example:
\begin{equation*}
	\begin{aligned}
		y_1 & = \sqrt{\dfrac{\beta+\delta}{\beta-\gamma}} \left( \sqrt{\dfrac{\gamma}{\delta}}  \cosh s + \sqrt{\dfrac{\beta}{\alpha}} \sinh s \right) \\
		& = \sqrt{\dfrac{\beta+\delta}{\beta-\gamma}\left(\dfrac{\gamma}{\delta}-\dfrac{\beta}{\alpha}\right)} \left(\sqrt{\dfrac{\alpha \gamma}{ \alpha \gamma - \beta \delta}} \cosh s + \sqrt{\dfrac{\beta \delta}{ \alpha \gamma - \beta \delta}} \sinh s \right)  \\
		& = \sqrt{\dfrac{\beta \gamma}{\delta \alpha }-1} \left(\sqrt{\dfrac{\alpha \gamma}{ \alpha \gamma - \beta \delta}} \cosh s + \sqrt{\dfrac{\beta \delta}{ \alpha \gamma - \beta \delta}} \sinh s \right) \\
		& = \sqrt{\dfrac{\beta \gamma}{\delta \alpha }-1} \cosh (s + \theta_1') \\
	\end{aligned}
\end{equation*}
\begin{equation*}
	\begin{aligned}
		w_1 & = \sqrt{\dfrac{\alpha+\gamma}{\alpha-\delta}} \left( \sqrt{\dfrac{\delta}{\gamma}}  \cosh s + \sqrt{\dfrac{\alpha}{\beta}} \sinh s \right) \\
		& = \sqrt{\dfrac{\alpha+\gamma}{\alpha-\delta}\left(\dfrac{\alpha}{\beta}-\dfrac{\delta}{\gamma}\right)} \left(\sqrt{\dfrac{\beta \delta}{ \alpha \gamma - \beta \delta}} \cosh s + \sqrt{\dfrac{\alpha \gamma}{ \alpha \gamma - \beta \delta}} \sinh s \right)  \\
		& = \sqrt{1-\dfrac{\delta \alpha}{\beta \gamma}} \left(\sqrt{\dfrac{\beta \delta}{ \alpha \gamma - \beta \delta}} \cosh s + \sqrt{\dfrac{\alpha \gamma}{ \alpha \gamma - \beta \delta}} \sinh s \right) \\
		& = \sqrt{1-\dfrac{\delta \alpha}{\beta \gamma}} \sinh (s + \theta_1')
	\end{aligned}
\end{equation*}
\begin{equation*}
	\tanh \theta_1' = \sqrt{\dfrac{\beta \delta}{\alpha \gamma}} ~~ \Rightarrow ~~ \theta_1' = \dfrac{1}{2} \ln \left( \dfrac{\sqrt{\beta \delta} + \sqrt{\alpha \gamma}}{\sqrt{\beta \delta} - \sqrt{\alpha \gamma}}\right)
\end{equation*}
We use $\theta_1'$ here since the phase shift $\theta_1$ will be defined later. After post-examination on sign choices, the solutions are:
\begin{equation*}
	\begin{aligned}
		&\begin{dcases}
			x = \sqrt{\dfrac{\alpha \beta}{\gamma \delta}-1} \cosh s \\
			y = \sqrt{\dfrac{\beta \gamma}{\delta \alpha }-1} \cosh (s + \theta_1') \\
			z = -\sqrt{1-\dfrac{\gamma \delta}{\alpha \beta}} \sinh s \\
			w= \sqrt{1-\dfrac{\delta \alpha}{\beta \gamma}} \sinh (s + \theta_1')
		\end{dcases} 
		~~ \mathrm{and} ~~ \begin{dcases}
			x = -\sqrt{\dfrac{\alpha \beta}{\gamma \delta}-1} \cosh s \\
			y = -\sqrt{\dfrac{\beta \gamma}{\delta \alpha }-1} \cosh (s + \theta_1') \\
			z = \sqrt{1-\dfrac{\gamma \delta}{\alpha \beta}} \sinh s \\
			w= -\sqrt{1-\dfrac{\delta \alpha}{\beta \gamma}} \sinh (s + \theta_1')
		\end{dcases}
	\end{aligned}
\end{equation*}
\begin{equation*}
	\begin{aligned}
		\mathrm{and} ~~ \begin{dcases}
			x = \sqrt{\dfrac{\alpha \beta}{\gamma \delta}-1} \cosh s \\
			y = \sqrt{\dfrac{\beta \gamma}{\delta \alpha }-1} \cosh (-s + \theta_1') \\
			z = \sqrt{1-\dfrac{\gamma \delta}{\alpha \beta}} \sinh s \\
			w= \sqrt{1-\dfrac{\delta \alpha}{\beta \gamma}} \sinh (-s + \theta_1')
		\end{dcases} 
		~~ \mathrm{and} ~~ \begin{dcases}
			x = -\sqrt{\dfrac{\alpha \beta}{\gamma \delta}-1} \cosh s \\
			y = -\sqrt{\dfrac{\beta \gamma}{\delta \alpha }-1} \cosh (-s + \theta_1') \\
			z = -\sqrt{1-\dfrac{\gamma \delta}{\alpha \beta}} \sinh s \\
			w= -\sqrt{1-\dfrac{\delta \alpha}{\beta \gamma}} \sinh (-s + \theta_1')
		\end{dcases} 
	\end{aligned}, ~~s \in (0, +\infty)
\end{equation*}

By extending $s \in (0, +\infty)$ to $s \in (-\infty, 0) \cup (0, +\infty)$, the flexes above could be rewritten as:

\begin{equation*}
	\begin{dcases}
		x = \sqrt{\dfrac{\alpha \beta}{\gamma \delta}-1} \cosh s \\
		y = \sqrt{\dfrac{\beta \gamma}{\delta \alpha }-1} \cosh (s - \theta_1') \\
		z = \sqrt{1-\dfrac{\gamma \delta}{\alpha \beta}} \sinh s \\
		w= -\sqrt{1-\dfrac{\delta \alpha}{\beta \gamma}} \sinh (s - \theta_1')
	\end{dcases} 
	~~ \mathrm{or} ~~ \begin{dcases}
		x = -\sqrt{\dfrac{\alpha \beta}{\gamma \delta}-1} \cosh s \\
		y = -\sqrt{\dfrac{\beta \gamma}{\delta \alpha }-1} \cosh (s - \theta_1') \\
		z = -\sqrt{1-\dfrac{\gamma \delta}{\alpha \beta}} \sinh s \\
		w= \sqrt{1-\dfrac{\delta \alpha}{\beta \gamma}} \sinh (s - \theta_1')
	\end{dcases} 
\end{equation*}

Now we could consider the analytical continuation, \begin{equation*}
	\begin{aligned}
		x=p_x \cos t  
	\end{aligned}
\end{equation*}
$t$ can be periodically extended to the one-point compactification of the complex field $\mathbb{C} \cup \{\infty\}$.
\begin{equation*}
	\begin{gathered}
		\begin{dcases}
			x = \sqrt{\dfrac{\alpha \beta}{\gamma \delta}-1} \cos t \\
			y = \sqrt{\dfrac{\beta \gamma}{\delta \alpha }-1} \cos \left( t - i \theta_1' \right) \\
			z = -i\sqrt{1 - \dfrac{\gamma \delta}{\alpha \beta}} \sin t = \sqrt{\dfrac{\gamma \delta}{\alpha \beta} - 1} \cos \left(t + \dfrac{\pi}{2}\right) \\
			w= i\sqrt{1-\dfrac{\delta \alpha}{\beta \gamma}}\sin (t - i \theta_1')  = \sqrt{\dfrac{\delta \alpha}{\beta \gamma}-1} \cos \left(t - i\theta_1' - \dfrac{\pi}{2} \right)
		\end{dcases} \\
		t = is ~~\mathrm{or}~~ \pi+is \mathrm{~for~real~configurations}\\
		\theta_1' = \dfrac{1}{2} \ln \left(\dfrac{\sqrt{\alpha \gamma}+\sqrt{\beta \delta}}{\sqrt{\alpha \gamma}-\sqrt{\beta \delta}}\right)
	\end{gathered}
\end{equation*}

Similar to Deltoid II, after listing all the cases, our next step is apply analytical continuation and write the final expressions. Let
\begin{equation*}
	\begin{gathered}
		\tan \theta_1 = i \sqrt{\dfrac{\beta \delta}{\alpha \gamma}} \\
		\tan \theta_2 = i \sqrt{\dfrac{\alpha \gamma}{\beta \delta}}
	\end{gathered}
\end{equation*}
and we could simplify the expression by choosing proper phase shift of $\theta_1$ and $\theta_2$. It is possible to write the two branches in a unified form. Each branch will be diffeomorphic to $\mathbb{R}$ with the parametrization below:
\begin{equation}
	\begin{dcases}
		x = p_x \cos t \\
		y = p_y \cos \left( t - \theta_1 \right) \\
		z = p_z \cos \left(t + \dfrac{\pi}{2}\right) \\
		w = p_w \cos \left(t - \theta_2 \right)
	\end{dcases}
\end{equation}
We could clearly see that in the complex field, $x, ~y, ~ z, ~w$ are trigonometric functions with specific amplitudes and phase shifts. 
\bgroup
\def\arraystretch{2}
\begin{center}
	\begin{tabular}{|c|c|c|}
		\hline
		\makecell{Choice of $t$ for \\ real configurations} &  Branch 1 & Branch 2 \\ 
		\hline
		\makecell{$p_x \in \mathbb{R}^+$ \\ $p_z \in i\mathbb{R}^+$} & \makecell {$is$, $s \in (0, +\infty)$ \\ $\pi + is$, $s \in (0, +\infty)$ \\ when $xz \neq 0$, $xz>0$ \\ this branch is continuous at $x = \infty$ \\ and `snaps' at $x = 0$} & \makecell {$is$, $s \in (-\infty, 0)$ \\ $\pi + is$, $s \in (-\infty, 0)$ \\ when $xz \neq 0$, $xz<0$ \\ this branch is continuous at $x = \infty$ \\ and `snaps' at $x = 0$} \\
		\hline
		\makecell{$p_x \in i\mathbb{R}^+$ \\ $p_z \in \mathbb{R}^+$} & \makecell {$3\pi/2 + is$, $s \in (0, +\infty) $ \\ $\pi/2 + is$, $s \in (0, +\infty)$ \\ when $xz \neq 0$, $xz>0$ \\ this branch is continuous at $x = \infty$ \\ and `snaps' at $x = 0$} & \makecell {$\pi/2 + is$, $s \in (-\infty, 0)$ \\ $3\pi/2 + is$, $s \in (-\infty, 0)$ \\ when $xz \neq 0$, $xz<0$ \\ this branch is continuous at $x = \infty$ \\ and `snaps' at $x = 0$} \\  
		\hline      
	\end{tabular}
\end{center}
Next, for the phase shift:
\begin{center}
	\begin{tabular}{|c|c|c|}
		\hline
		&  $\theta_1$ & $\theta_2$ \\
		\hline
		$p_x \in \mathbb{R}^+, ~p_y \in \mathbb{R}^+ \Leftrightarrow \beta = \mathrm{max}$ & $\dfrac{i}{2}\ln \left|\dfrac{\sqrt{\alpha \gamma}+\sqrt{\beta \delta}}{\sqrt{\alpha \gamma}-\sqrt{\beta \delta}}\right|$ & $\dfrac{\pi}{2} + \theta_1$ \\
		\hline
		$p_y \in \mathbb{R}^+, ~p_z \in \mathbb{R}^+ \Leftrightarrow \gamma = \mathrm{max}$ & $-\dfrac{\pi}{2} + \dfrac{i}{2}\ln \left|\dfrac{\sqrt{\alpha \gamma}+\sqrt{\beta \delta}}{\sqrt{\alpha \gamma}-\sqrt{\beta \delta}}\right|$ & $\dfrac{\pi}{2} + \theta_1$\\ 
		\hline 
		$p_z \in \mathbb{R}^+, ~p_w \in \mathbb{R}^+ \Leftrightarrow \delta = \mathrm{max}$ & $ \dfrac{i}{2}\ln \left|\dfrac{\sqrt{\alpha \gamma}+\sqrt{\beta \delta}}{\sqrt{\alpha \gamma}-\sqrt{\beta \delta}}\right|$ & $-\dfrac{\pi}{2} + \theta_1$ \\
		\hline
		$p_w \in \mathbb{R}^+, ~p_x \in \mathbb{R}^+ \Leftrightarrow \alpha = \mathrm{max}$ & $ \dfrac{\pi}{2} + \dfrac{i}{2}\ln \left|\dfrac{\sqrt{\alpha \gamma}+\sqrt{\beta \delta}}{\sqrt{\alpha \gamma}-\sqrt{\beta \delta}}\right|$ & $-\dfrac{\pi}{2} + \theta_1$ \\
		\hline      
	\end{tabular}
\end{center}
\egroup
There is no self-intersection for the above two branches.

\section{Conic II: finite solution}

The condition on linkage lengths is 
\begin{equation*} 
	f_{22} \neq 0, ~~ f_{20} = 0 , ~~ f_{02} \neq 0 ~~ \Leftrightarrow ~~ \begin{dcases}
		\alpha - \beta + \gamma - \delta \neq 0 \\
		\alpha - \beta - \gamma + \delta = 0 \\
		\alpha + \beta - \gamma - \delta \neq 0
	\end{dcases}
\end{equation*}
\begin{equation*} 
	\begin{aligned}
		& \begin{cases}
			\gamma(\alpha-\beta)x^2y^2-2\alpha\gamma xy+\beta(\alpha-\gamma)y^2+\alpha(\beta+\gamma) =0 \\
			\alpha \beta (z^2+1)=\gamma \delta (x^2+1) \\
			\delta(\beta-\gamma)x^2w^2-2\beta\delta xw+\gamma(\beta-\alpha)w^2+\beta(\alpha+\gamma)=0
		\end{cases} \\
		= & \begin{cases}
			\alpha(\beta+\gamma)y^{-2} - 2\alpha\gamma y^{-1}x + \gamma(\alpha-\beta)x^2 + \beta(\alpha-\gamma) =0 \\
			\alpha \beta (z^2+1)=\gamma \delta (x^2+1) \\
			\beta(\alpha+\delta)w^{-2} - 2\beta\delta w^{-1}x + \delta(\beta-\alpha)x^2 + \alpha(\beta-\delta) =0
		\end{cases} \\
	\end{aligned}
\end{equation*}
The substitution below allows us to transfer the result for Conic I to Conic II:
\begin{equation}
	(\alpha, ~\beta, ~\gamma, ~\delta, ~x, ~y, ~z, ~w) \rightarrow (\alpha, ~\beta, ~-\gamma, ~-\delta, ~x, ~-y^{-1}, ~-z, ~-w^{-1}) \\
\end{equation}
Now:
\begin{equation*}
	\sigma = \dfrac{\alpha + \beta - \gamma - \delta}{2}
\end{equation*}
and we could directly write the result with a careful checking on the sign change:
\begin{equation}
	\begin{dcases}
		x = p_x \cos t \\
		y^{-1} = \mathrm{sign}(\sigma) p_y \cos \left( t - \theta_1 \right) \\
		z = p_z \cos \left(t - \dfrac{\pi}{2}\right) \\
		w^{-1} = \mathrm{sign}(\sigma) p_w \cos \left(t - \theta_2 \right)
	\end{dcases}
\end{equation}
\bgroup
\def\arraystretch{2}
\begin{center}
	\begin{tabular}{|c|c|c|}
		\hline
		\makecell{Choice of $t$ for \\ real configurations} &  Branch 1 & Branch 2 \\ 
		\hline
		\makecell{$p_x \in \mathbb{R}^+$ \\ $p_z \in i\mathbb{R}^+$} & \makecell {$is$, $s \in (-\infty, 0)$ \\ $\pi + is$, $s \in (-\infty, 0)$ \\ when $xz \neq 0$, $xz>0$ \\ this branch is continuous at $x = \infty$ \\ and `snaps' at $x = 0$} & \makecell {$is$, $s \in (0, +\infty)$ \\ $\pi + is$, $s \in (0, +\infty)$ \\ when $xz \neq 0$, $xz<0$ \\ this branch is continuous at $x = \infty$ \\ and `snaps' at $x = 0$} \\
		\hline
		\makecell{$p_x \in i\mathbb{R}^+$ \\ $p_z \in \mathbb{R}^+$} & \makecell {$\pi/2 + is$, $s \in (-\infty, 0)$ \\ $3\pi/2 + is$, $s \in (-\infty, 0)$ \\ when $xz \neq 0$, $xz>0$ \\ this branch is continuous at $x = \infty$ \\ and `snaps' at $x = 0$} & \makecell {$3\pi/2 + is$, $s \in (0, +\infty)$ \\ $\pi/2 + is$, $s \in (0, +\infty)$ \\ when $xz \neq 0$, $xz>0$ \\ this branch is continuous at $x = \infty$ \\ and `snaps' at $x = 0$} \\  
		\hline      
	\end{tabular}
\end{center}
Next, for the phase shift:
\begin{center}
	\begin{tabular}{|c|c|c|}
		\hline
		&  $\theta_1$ & $\theta_2$ \\
		\hline
		$p_x \in \mathbb{R}^+, ~p_y \in \mathbb{R}^+ \Leftrightarrow \beta = \mathrm{max}$ & $\dfrac{i}{2}\ln \left|\dfrac{\sqrt{\alpha \gamma}+\sqrt{\beta \delta}}{\sqrt{\alpha \gamma}-\sqrt{\beta \delta}}\right|$ & $\dfrac{\pi}{2} + \theta_1$ \\
		\hline
		$p_y \in \mathbb{R}^+, ~p_z \in \mathbb{R}^+ \Leftrightarrow \gamma = \mathrm{max}$ & $-\dfrac{\pi}{2} + \dfrac{i}{2}\ln \left|\dfrac{\sqrt{\alpha \gamma}+\sqrt{\beta \delta}}{\sqrt{\alpha \gamma}-\sqrt{\beta \delta}}\right|$ & $\dfrac{\pi}{2} + \theta_1$\\ 
		\hline 
		$p_z \in \mathbb{R}^+, ~p_w \in \mathbb{R}^+ \Leftrightarrow \delta = \mathrm{max}$ & $ \dfrac{i}{2}\ln \left|\dfrac{\sqrt{\alpha \gamma}+\sqrt{\beta \delta}}{\sqrt{\alpha \gamma}-\sqrt{\beta \delta}}\right|$ & $-\dfrac{\pi}{2} + \theta_1$ \\
		\hline
		$p_w \in \mathbb{R}^+, ~p_x \in \mathbb{R}^+ \Leftrightarrow \alpha = \mathrm{max}$ & $ \dfrac{\pi}{2} + \dfrac{i}{2}\ln \left|\dfrac{\sqrt{\alpha \gamma}+\sqrt{\beta \delta}}{\sqrt{\alpha \gamma}-\sqrt{\beta \delta}}\right|$ & $-\dfrac{\pi}{2} + \theta_1$ \\
		\hline      
	\end{tabular}
\end{center}
\egroup

\section{Conic III: finite solution}

The condition on linkage lengths is 
\begin{equation*} 
	f_{22} \neq 0, ~~ f_{20} \neq 0 , ~~ f_{02} = 0 ~~ \Leftrightarrow ~~ \begin{dcases}
		\alpha - \beta + \gamma - \delta \neq 0 \\
		\alpha - \beta - \gamma + \delta \neq 0 \\
		\alpha + \beta - \gamma - \delta = 0
	\end{dcases}
\end{equation*}
which implies $\sigma  = \alpha + \beta = \gamma + \delta$, and
\begin{equation*} 
	\begin{aligned}
		& \begin{cases}
			\alpha(\gamma-\beta)x^2y^2+\beta(\gamma-\alpha)x^2-2\alpha\gamma xy+\gamma(\alpha+\beta) =0 \\
			\alpha \beta (z^{-2}+1)=\gamma \delta (x^{-2}+1) \\
			\beta(\delta-\alpha)x^2w^2+\alpha(\delta-\beta)x^2-2\beta\delta xw+\delta(\beta+\alpha) =0 \\
		\end{cases} \\
		= & \begin{cases}
			\alpha(\gamma-\beta)y^2-2\alpha\gamma x^{-1}y+\gamma(\alpha+\beta)x^{-2}+\beta(\gamma-\alpha) =0 \\
			\alpha \beta (z^{-2}+1)=\gamma \delta (x^{-2}+1) \\
			\beta(\delta-\alpha)w^2-2\beta\delta x^{-1}w+\delta(\beta+\alpha)x^{-2}+\alpha(\delta-\beta) =0 \\
		\end{cases} \\
	\end{aligned}
\end{equation*}
The substitution below allows us to transfer the result for Conic I to Conic III:
\begin{equation}
	(\alpha, ~\beta, ~\gamma, ~\delta, ~x, ~y, ~z, ~w) \rightarrow (-\alpha, ~\beta, ~\gamma, ~-\delta, ~-x^{-1}, ~y, ~-z^{-1}, ~-w) \\
\end{equation}
Now:
\begin{equation*}
	\sigma = \dfrac{- \alpha + \beta + \gamma - \delta}{2}
\end{equation*}
and we could directly write the result with a little careful checking on the derivation for Conic I:
\begin{equation}
	\begin{dcases}
		x^{-1} = -p_x \cos t \\
		y = -\mathrm{sign}(\sigma) p_y \cos \left( t - \theta_1 \right) \\
		z^{-1} = p_z \cos \left(t - \dfrac{\pi}{2}\right) \\
		w = \mathrm{sign}(\sigma) p_w \cos \left(t - \theta_2 \right)
	\end{dcases}
\end{equation}
\bgroup
\def\arraystretch{2}
\begin{center}
	\begin{tabular}{|c|c|c|}
		\hline
		\makecell{Choice of $t$ for \\ real configurations} &  Branch 1 & Branch 2 \\ 
		\hline
		\makecell{$p_x \in \mathbb{R}^+$ \\ $p_z \in i\mathbb{R}^+$} & \makecell {$is$, $s \in (0, +\infty)$ \\ $\pi + is$, $s \in (0, +\infty)$ \\ when $xz \neq 0$, $xz>0$ \\ this branch is continuous at $x = 0$ \\ and `snaps' at $x = \infty$} & \makecell {$is$, $s \in (-\infty, 0)$ \\ $\pi + is$, $s \in (-\infty, 0)$ \\ when $xz \neq 0$, $xz<0$ \\ this branch is continuous at $x = 0$ \\ and `snaps' at $x = \infty$} \\
		\hline
		\makecell{$p_x \in i\mathbb{R}^+$ \\ $p_z \in \mathbb{R}^+$} & \makecell {$3\pi/2 + is$, $s \in (0, +\infty)$ \\ $\pi/2 + is$, $s \in (0, +\infty)$ \\ when $xz \neq 0$, $xz>0$ \\ this branch is continuous at $x = 0$ \\ and `snaps' at $x = \infty$} & \makecell {$\pi/2 + is$, $s \in (-\infty, 0)$ \\ $3\pi/2 + is$, $s \in (-\infty, 0)$ \\ when $xz \neq 0$, $xz<0$ \\ this branch is continuous at $x = 0$ \\ and `snaps' at $x = \infty$} \\  
		\hline      
	\end{tabular}
\end{center}
Next, for the phase shift:
\begin{center}
	\begin{tabular}{|c|c|c|}
		\hline
		&  $\theta_1$ & $\theta_2$ \\
		\hline
		$p_x \in \mathbb{R}^+, ~p_y \in \mathbb{R}^+ \Leftrightarrow \beta = \mathrm{max}$ & $\dfrac{i}{2}\ln \left|\dfrac{\sqrt{\alpha \gamma}+\sqrt{\beta \delta}}{\sqrt{\alpha \gamma}-\sqrt{\beta \delta}}\right|$ & $\dfrac{\pi}{2} + \theta_1$ \\
		\hline
		$p_y \in \mathbb{R}^+, ~p_z \in \mathbb{R}^+ \Leftrightarrow \gamma = \mathrm{max}$ & $-\dfrac{\pi}{2} + \dfrac{i}{2}\ln \left|\dfrac{\sqrt{\alpha \gamma}+\sqrt{\beta \delta}}{\sqrt{\alpha \gamma}-\sqrt{\beta \delta}}\right|$ & $\dfrac{\pi}{2} + \theta_1$\\ 
		\hline 
		$p_z \in \mathbb{R}^+, ~p_w \in \mathbb{R}^+ \Leftrightarrow \delta = \mathrm{max}$ & $ \dfrac{i}{2}\ln \left|\dfrac{\sqrt{\alpha \gamma}+\sqrt{\beta \delta}}{\sqrt{\alpha \gamma}-\sqrt{\beta \delta}}\right|$ & $-\dfrac{\pi}{2} + \theta_1$ \\
		\hline
		$p_w \in \mathbb{R}^+, ~p_x \in \mathbb{R}^+ \Leftrightarrow \alpha = \mathrm{max}$ & $ \dfrac{\pi}{2} + \dfrac{i}{2}\ln \left|\dfrac{\sqrt{\alpha \gamma}+\sqrt{\beta \delta}}{\sqrt{\alpha \gamma}-\sqrt{\beta \delta}}\right|$ & $-\dfrac{\pi}{2} + \theta_1$ \\
		\hline      
	\end{tabular}
\end{center}
\egroup

\section{Elliptic: finite solution} \label{section: planar elliptic}
This is the most general case with no extra condition on the linkage lengths. 
\begin{equation*} 
	f_{22} \neq 0, ~~ f_{20} \neq 0 , ~~ f_{02} \neq 0 ~~ \Leftrightarrow ~~ \begin{dcases}
		\alpha - \beta + \gamma - \delta \neq 0 \\
		\alpha - \beta - \gamma + \delta \neq 0 \\
		\alpha + \beta - \gamma - \delta \neq 0
	\end{dcases}
\end{equation*}
Similarly, let us start with Equation \eqref{eq: opposite rotational angle}:
\begin{equation*} 
	\begin{gathered}
		g(\alpha, ~\beta, ~\gamma, ~\delta, ~x, ~ z)=g_{22}x^2z^2+g_{20}x^2+g_{02}z^2+g_{00}=0 \\
		g_{22} = (\sigma-\alpha-\delta)(\sigma-\beta-\delta), ~~
		g_{20} = (\sigma-\alpha) (\sigma-\beta) \\
		g_{02} = -(\sigma-\gamma) (\sigma-\delta), ~~
		g_{00} = \sigma (\sigma - \alpha - \beta) \\
		\sigma=\dfrac{\alpha+\beta+\gamma+\delta}{2}
	\end{gathered}
\end{equation*}
Let
\begin{equation}
	M = \dfrac{\alpha \beta \gamma \delta}{(\sigma-\alpha)(\sigma-\beta)(\sigma-\gamma)(\sigma-\delta)} \in (0, 1) \cup (1, +\infty)
\end{equation}
If using the amplitudes $p_x$ and $p_z$, from Section \ref{section: sign convention} we could see the above equation implies:
\begin{equation} \label{eq: elliptic opposite}
	\begin{gathered}
		g(\alpha, ~\beta, ~\gamma, ~\delta, ~x, ~ z)=\left(M-1\right)\dfrac{x^2z^2}{p_x^2p_z^2}+\dfrac{x^2}{p_x^2}+\dfrac{z^2}{p_z^2} - 1 = 0\\
	\end{gathered}	
\end{equation} 
Further,
\begin{equation*}
	\begin{gathered}
		f(\alpha, ~\beta, ~\gamma, ~\delta, ~x, ~y) = f_{22} x^2y^2 + f_{20}x^2 + 2f_{11}xy + f_{02}y^2 + f_{00} = 0 \\
		f_{22} = (\sigma-\beta)(\sigma-\beta-\delta), ~~
		f_{20} = (\sigma-\alpha)(\sigma-\alpha-\delta) \\
		f_{11} = - \alpha \gamma, ~~ f_{02} =   (\sigma-\gamma)(\sigma-\gamma-\delta), ~~ f_{00}  = 
		\sigma(\sigma-\delta) \\
	\end{gathered}
\end{equation*}
Divide by $\dfrac{\sigma(\sigma-\alpha-\delta)(\sigma-\gamma-\delta)}{\sigma-\beta}$:
\begin{equation} \label{eq: elliptic adjacent}
	\begin{aligned}
		f(\alpha, ~\beta, ~\gamma, ~\delta, ~x, ~y) & = \left( \dfrac{\alpha \gamma}{(\sigma-\alpha)(\sigma-\gamma)}-1 \right)\dfrac{x^2y^2}{p_x^2p_y^2} + \dfrac{x^2}{p_x^2} + \dfrac{y^2}{p_y^2} + \dfrac{(\sigma-\beta)(\sigma-\delta)}{(\sigma-\alpha)(\sigma-\gamma)-\beta\delta} \\ & \quad - \dfrac{2\alpha \gamma}{\sqrt{(\sigma-\alpha)(\sigma-\gamma)((\sigma-\alpha)(\sigma-\gamma)-\beta\delta)}}\dfrac{xy}{p_xp_y} = 0
	\end{aligned} \\
\end{equation} 
hence 
\begin{equation} \label{eq: elliptic adjacent 2}
	\begin{aligned}
		f(\beta, ~\alpha, ~\delta, ~\gamma, ~x, ~w) & = \left( \dfrac{\beta \delta}{(\sigma-\beta)(\sigma-\delta)}-1 \right)\dfrac{x^2y^2}{p_x^2p_w^2} + \dfrac{x^2}{p_x^2} + \dfrac{w^2}{p_w^2} + \dfrac{(\sigma-\alpha)(\sigma-\gamma)}{(\sigma-\beta)(\sigma-\delta)-\alpha\beta} \\ & \quad - \dfrac{2\beta \delta}{\sqrt{(\sigma-\beta)(\sigma-\delta)((\sigma-\beta)(\sigma-\delta)-\alpha\gamma)}}\dfrac{xy}{p_xp_w} = 0
	\end{aligned} \\
\end{equation} 

It has the parametrization using the \textbf{elliptic functions} defined below. Let us start with a brief introduction. For a more complete handbook on elliptic functions we refer to this website \url{https://functions.wolfram.com/EllipticFunctions/}.

Consider a simple in-plane pendulum where a heavy object is attached to one end of a light inextensible string with length $l$. The other end of the rod is attached to a fixed point. The gravitational acceleration is $g$. The governing equation on the angular displacement $\theta$ with respect to time $t$, as we are already familiar with, has the form of 
\begin{equation*}
	\dfrac{\dif^2 \theta}{\dif t^2} + \omega^2 \sin \theta = 0, ~~\omega = \sqrt{\dfrac{g}{l}}
\end{equation*} 
If we release this pendulum at angular displacement $\alpha \in \left( 0, \dfrac{\pi}{2} \right)$, the Hamiltonian (mechanical energy) being conserved implies that:
\begin{equation*}
	\dfrac{\dif \theta}{\dif t}^2 = 2\omega^2 (\cos \theta - \cos \alpha), ~~\theta \in [-\alpha, \alpha]
\end{equation*} 
The time it takes to reach angular displacement $\phi \in [-\alpha, \alpha]$ will be
\begin{equation*}
	t\bigg|_\alpha^\phi = \dfrac{1}{\omega} \bigintsss_{0}^{\phi} \dfrac{\dif \theta }{\sqrt{2(\cos \theta - \cos \alpha)}} = \dfrac{1}{2\omega} \bigintsss_{0}^{\phi} \dfrac{\dif \theta }{\sqrt{\sin^2 \frac{\alpha}{2} - \sin^2 \frac{\theta}{2}}}
\end{equation*} 
Let 
\begin{equation*}
	\begin{gathered}
		\psi = \arcsin \left(k^{-1}\sin \frac{\theta}{2}\right), ~~\psi \in \left[-\dfrac{\pi}{2}, \dfrac{\pi}{2}\right] ,~~ k = \sin \dfrac{\alpha}{2} \in \left[ 0, \dfrac{\sqrt{2}}{2} \right]\\
	\end{gathered}
\end{equation*}
We could see that:
\begin{equation*}
	\dfrac{\dif \psi}{\dif \theta} = \dfrac{\sqrt{1-k^2 \sin^2 \psi}}{2k \cos \psi}, ~~ \dfrac{\dif \theta}{\dif \psi} = \dfrac{2k \cos \psi}{\sqrt{1-k^2 \sin^2 \psi}}
\end{equation*}
and
\begin{equation*}
	t\bigg|_\alpha^\phi = \dfrac{1}{\omega} \bigintsss_{0}^{\arcsin \left(k^{-1}\sin \frac{\phi}{2}\right)} \dfrac{\dif x}{\sqrt{1 - k^2 \sin^2 x}}
\end{equation*} 
Consider the \textit{first kind of elliptic integral} in the form below:
\begin{equation*}
	x(y) = \bigintsss_0^y \dfrac{\dif \psi}{\sqrt{1-k^2 \sin^2 \psi}}, ~~k \in (0, 1)
\end{equation*}
The quarter period $K$ is:
\begin{equation*}
	K(k) = \bigintsss_{0}^{\frac{\pi}{2}} \dfrac{\dif \psi}{\sqrt{1 - k^2 \sin^2 \psi}}
\end{equation*}  
The \textit{Jacobian elliptic functions} are:
\begin{equation*}
	\begin{gathered}
		\mathrm{sn}(x;~k) = \sin y \\
		\mathrm{cn}(x;~k) = \cos y \\
		\mathrm{dn}(x;~k) = \sqrt{1-k^2 \sin^2 y}
	\end{gathered}
\end{equation*}

\begin{figure} [!tb]
	\noindent \begin{centering}
		\includegraphics[width=1\linewidth]{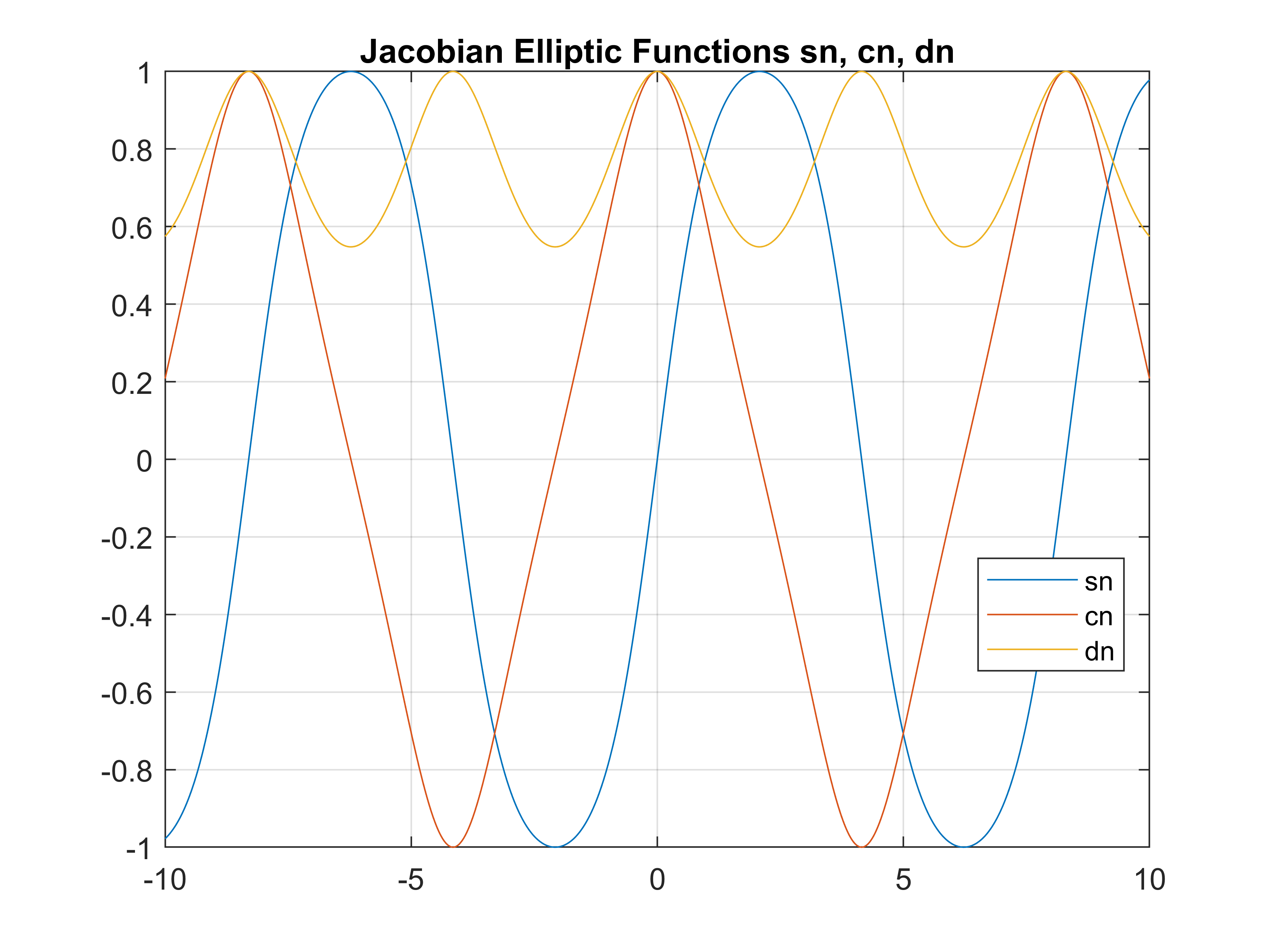}
		\par \end{centering}
	
	\caption{\label{fig: Jacobian_elliptic_function}Jacobian elliptic functions $\mathrm{sn}$, $\mathrm{cn}$, $\mathrm{dn}$ when $k^2 = 0.7$.}
\end{figure}

We list all the helpful identities here:
\begin{equation*}
	\begin{dcases}
		\mathrm{sn}(x;~0) = \sin x \\
		\mathrm{cn}(x;~0) = \cos x \\
		\mathrm{dn}(x;~0) = 1 \\
		K(0) = \dfrac{\pi}{2} 
	\end{dcases}, ~~
	\begin{dcases}
		\mathrm{sn}(x;~1) = \tanh x \\
		\mathrm{cn}(x;~1) = \dfrac{1}{\cosh x} \\
		\mathrm{dn}(x;~1) = \dfrac{1}{\cosh x} \\
		K(1) = + \infty
	\end{dcases}, ~~x \in \mathbb{R}
\end{equation*}
Let $k' = \sqrt{1-k^2}$:
\begin{equation*}
	\begin{dcases}
		\mathrm{sn}(0) = 0 \\
		\mathrm{cn}(0) = 1 \\
		\mathrm{dn}(0) = 0 \\ 
	\end{dcases}, ~~ \begin{dcases}
		\mathrm{sn}(K) = 1\\
		\mathrm{cn}(K) = 0 \\
		\mathrm{dn}(K) = k'\\ 
	\end{dcases}, ~~ 	\begin{dcases}
		\mathrm{sn}(x+2K) = - \mathrm{sn}(x)\\
		\mathrm{cn}(x+2K) = - \mathrm{cn}(x) \\
	\end{dcases}, ~~ \begin{dcases}
		\mathrm{sn}(x+4K) = \mathrm{sn}(x)\\
		\mathrm{cn}(x+4K) = \mathrm{cn}(x) \\
		\mathrm{dn}(x+2K) = \mathrm{dn}(x)\\ 
	\end{dcases}
\end{equation*}
\begin{equation*}
	\begin{dcases}
		\mathrm{sn}^2(x)+\mathrm{cn}^2(x) = 1 \\ 
		k^2\mathrm{sn}^2(x)+\mathrm{dn}^2(x) = 1
	\end{dcases}, ~~ \begin{dcases}
		\mathrm{sn}(-x) = -\mathrm{sn}(x) \\
		\mathrm{cn}(-x) = \mathrm{cn}(x) \\
		\mathrm{dn}(-x) = \mathrm{dn}(x) \\
	\end{dcases}, ~~x \in \mathbb{R}
\end{equation*}
The addition formulas:
\begin{equation*}
	\begin{dcases}
		\mathrm{sn}(x+y) = \dfrac{\mathrm{sn}(x) \mathrm{cn}(y) \mathrm{dn}(y) + \mathrm{sn}(y) \mathrm{cn}(x) \mathrm{dn}(x)}{1-k^2\mathrm{sn}^2(x) \mathrm{sn}^2(y)} \\
		\mathrm{cn}(x+y) = \dfrac{\mathrm{cn}(x) \mathrm{cn}(y) -\mathrm{sn}(x) \mathrm{sn}(y) \mathrm{dn}(x) \mathrm{dn}(y)}{1-k^2\mathrm{sn}^2(x) \mathrm{sn}^2(y)} \\
		\mathrm{dn}(x+y) = \dfrac{\mathrm{dn}(x) \mathrm{dn}(y) - k^2 \mathrm{sn}(x) \mathrm{sn}(y) \mathrm{cn}(x) \mathrm{cn}(y)}{1-k^2\mathrm{sn}^2(x) \mathrm{sn}^2(y)}
	\end{dcases}, ~~x \in \mathbb{R}
\end{equation*}
so that
\begin{equation*}
	\begin{dcases}
		\mathrm{sn}(x+K) = \dfrac{\mathrm{cn}(x)}{\mathrm{dn}(x)} \\
		\mathrm{cn}(x+K) = -\dfrac{k' \mathrm{sn}(x)}{\mathrm{dn}(x)} \\
		\mathrm{dn}(x+K) = \dfrac{k'}{\mathrm{dn}(x)}
	\end{dcases}, ~~x \in \mathbb{R}
\end{equation*}
The Jacobian elliptic functions admit an extension to the complex field, which can be computed using the addition formulas on $\mathrm{sn}(x+iy)$, $\mathrm{cn}(x+iy)$, $\mathrm{dn}(x+iy)$. 
\begin{equation*}
	\begin{dcases}
		\mathrm{sn}(ix; ~k) = i\dfrac{\mathrm{sn}(x; ~k')}{\mathrm{cn}(x; ~k')}\\
		\mathrm{cn}(ix; ~k) = \dfrac{1}{\mathrm{cn}(x; ~k')} \\
		\mathrm{dn}(ix; ~k) = \dfrac{\mathrm{dn}(x; ~k')}{\mathrm{cn}(x; ~k')}\\ 
		K' = K(k')
	\end{dcases}, ~~x \in \mathbb{R}
\end{equation*}

\begin{prop}
	Consider the parametrization of Equation \eqref{eq: elliptic opposite}, Equation \eqref{eq: elliptic adjacent} and Equation \eqref{eq: elliptic adjacent 2}.
	\begin{enumerate} [label={[\arabic*]}]
		\item When $M \in (1, +\infty)$:
		\begin{equation*}
			\begin{gathered}
				\dfrac{k^2}{1-k^2}\mathrm{cn}^2t\mathrm{cn}^2(t+K)+\mathrm{cn}^2t+\mathrm{cn}^2(t+K)-1=0 \\
				k = \sqrt{1-\dfrac{1}{M}}
			\end{gathered}
		\end{equation*}
		That is to say for Equation \eqref{eq: elliptic opposite}:
		\begin{equation*}
			x = p_x \mathrm{cn} t, ~~z = p_z \mathrm{cn} (t+K)
		\end{equation*}
		\item When $M \in (0, 1)$:
		\begin{equation*}
			\begin{gathered}
				-k^2\mathrm{sn}^2t\mathrm{sn}^2(t+K)+\mathrm{sn}^2t+\mathrm{sn}^2(t+K)-1=0 \\
				k = \sqrt{1-M}
			\end{gathered}
		\end{equation*}
		That is to say for Equation \eqref{eq: elliptic opposite}:
		\begin{equation*}
			x = p_x \mathrm{sn} t, ~~z = p_z \mathrm{sn} (t+K)
		\end{equation*}
		\item When $M \in (1, +\infty)$:
		\begin{equation*}
			\begin{gathered}
				\dfrac{k^2\mathrm{sn}^2 \theta_1}{\mathrm{dn}^2 \theta_1}\mathrm{cn}^2 t\mathrm{cn}^2 (t-\theta_1)+\mathrm{cn}^2t-\dfrac{2\mathrm{cn}\theta_1}{\mathrm{dn}^2\theta_1}\mathrm{cn} t\mathrm{cn} (t-\theta_1)+\mathrm{cn}^2(t-\theta_1)+\dfrac{(k^2-1)\mathrm{sn}^2 \theta_1}{\mathrm{dn}^2 \theta_1}=0 \\
				k = \sqrt{1-\dfrac{1}{M}}
			\end{gathered}
		\end{equation*}
		Hence for the parametrization of Equation \eqref{eq: elliptic adjacent} when $\alpha+\beta<\sigma$:
		\begin{equation*}
			\begin{dcases}
				\dfrac{k^2\mathrm{sn}^2 \theta_1}{\mathrm{dn}^2 \theta_1} = \dfrac{\alpha \gamma}{(\sigma-\alpha)(\sigma-\gamma)}-1 \\
				\dfrac{\mathrm{cn}\theta_1}{\mathrm{dn}^2\theta_1} = \dfrac{\alpha \gamma}{\sqrt{(\sigma-\alpha)(\sigma-\gamma)((\sigma-\alpha)(\sigma-\gamma)-\beta\delta)}}\\
				\dfrac{(k^2-1)\mathrm{sn}^2 \theta_1}{\mathrm{dn}^2 \theta_1} = \dfrac{(\sigma-\beta)(\sigma-\delta)}{(\sigma-\alpha)(\sigma-\gamma)-\beta\delta}
			\end{dcases}
		\end{equation*}
		The above equation determines $\theta_1$ in the form of:
		\begin{equation*}
			\mathrm{dn}\theta_1 = \sqrt{\dfrac{(\sigma-\alpha)(\sigma-\gamma)}{\alpha \gamma}}, ~~ \theta_1 \in (0, iK') \mathrm{~~when~~} \alpha + \gamma < \sigma \\
		\end{equation*}
		When $\alpha + \gamma > \sigma$, we need to do the following transformation:
		\begin{equation*}
			(\alpha, ~\beta, ~\gamma, ~\delta, ~x, ~y, ~z, ~w) \rightarrow (\delta, ~\alpha, ~\beta, ~\gamma, ~w, ~x, ~y, ~z)
		\end{equation*}
		and we could find 
		\begin{equation*}
			\mathrm{dn}\theta_1 = \sqrt{\dfrac{(\sigma-\beta)(\sigma-\delta)}{\beta \delta}}, ~~ \theta_1 \in (0, iK') \mathrm{~~when~~} \alpha + \gamma > \sigma \\
		\end{equation*}
		\item When $M \in (0, 1)$:
		\begin{equation*}
			\begin{gathered}
				- k^2 \mathrm{sn}^2 \theta_1 \mathrm{sn}^2 t\mathrm{sn}^2 (t-\theta_1)+\mathrm{sn}^2t-2\mathrm{cn}\theta_1 \mathrm{dn} \theta_1 \mathrm{sn} t\mathrm{sn} (t-\theta_1)+\mathrm{sn}^2(t-\theta_1)-\mathrm{sn}^2 \theta_1=0 \\
				k = \sqrt{1-M}
			\end{gathered}
		\end{equation*}
		Hence for the parametrization of Equation \eqref{eq: elliptic adjacent}, when $\alpha + \gamma > \sigma $:
		\begin{equation*}
			\begin{dcases}
				-k^2\mathrm{sn}^2 \theta_1 = \dfrac{\alpha \gamma}{(\sigma-\alpha)(\sigma-\gamma)}-1 \\
				\mathrm{cn}\theta_1 \mathrm{dn} \theta_1 = \dfrac{\alpha \gamma}{\sqrt{(\sigma-\alpha)(\sigma-\gamma)((\sigma-\alpha)(\sigma-\gamma)-\beta\delta)}} \\
				- \mathrm{sn}^2 \theta_1 = \dfrac{(\sigma-\beta)(\sigma-\delta)}{(\sigma-\alpha)(\sigma-\gamma)-\beta\delta} 
			\end{dcases}
		\end{equation*}
		The above equation determines $\theta_1$ in the form of:
		\begin{equation*}
			\mathrm{dn}\theta_1 = \sqrt{\dfrac{\alpha \gamma}{(\sigma-\alpha)(\sigma-\gamma)}}, ~~\theta_1 \in (0, iK') \mathrm{~~when~~} \alpha + \gamma > \sigma \\
		\end{equation*}
		When $\alpha + \gamma < \sigma$, we need to do the following transformation:
		\begin{equation*}
			(\alpha, ~\beta, ~\gamma, ~\delta, ~x, ~y, ~z, ~w) \rightarrow (\delta, ~\alpha, ~\beta, ~\gamma, ~w, ~x, ~y, ~z)
		\end{equation*}
		and we could find 
		\begin{equation*}
			\mathrm{dn}\theta_1 = \sqrt{\dfrac{(\sigma-\beta)(\sigma-\delta)}{\beta \delta}}, ~~ \theta_1 \in (0, iK') \mathrm{~~when~~} \alpha + \gamma < \sigma \\
		\end{equation*}
	\end{enumerate}
\end{prop}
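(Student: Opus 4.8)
The plan is to treat the two pairs of claims separately: parts [1]--[2] concern the single biquadratic \eqref{eq: elliptic opposite} in $(x,z)$, which has no cross term and can be verified by substituting a half-period translate; parts [3]--[4] concern \eqref{eq: elliptic adjacent}, which carries a genuine $xy$ cross term, and these need the full addition formula together with a coefficient-matching argument that pins down the phase shift $\theta_1$. Throughout I would reduce everything to the two fundamental relations $\mathrm{sn}^2+\mathrm{cn}^2=1$ and $k^2\mathrm{sn}^2+\mathrm{dn}^2=1$.

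For part [1], I would substitute $x=p_x\,\mathrm{cn}\,t$, $z=p_z\,\mathrm{cn}(t+K)$ into \eqref{eq: elliptic opposite}. Since $M\in(1,+\infty)$ and $k=\sqrt{1-1/M}$ give $1-k^2=1/M$, the leading coefficient satisfies $M-1=k^2/(1-k^2)$, so the target is exactly the displayed identity. I would then eliminate the shift by the half-period formula $\mathrm{cn}(t+K)=-k'\,\mathrm{sn}\,t/\mathrm{dn}\,t$, clear the denominator $\mathrm{dn}^2 t=1-k^2\mathrm{sn}^2 t$, and reduce the numerator; the expression collapses to $\mathrm{sn}^2 t+\mathrm{cn}^2 t-1=0$. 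Part [2] is the mirror computation with $x=p_x\,\mathrm{sn}\,t$, $z=p_z\,\mathrm{sn}(t+K)$, using $\mathrm{sn}(t+K)=\mathrm{cn}\,t/\mathrm{dn}\,t$ and the identification $M-1=-k^2$ for $k=\sqrt{1-M}$; after clearing denominators the remainder factors as $k^2\mathrm{sn}^2 t\,(1-\mathrm{sn}^2 t-\mathrm{cn}^2 t)=0$.

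For parts [3]--[4] I would first establish the stated elliptic identity as an instance of the algebraic addition theorem. Writing $X=\mathrm{cn}\,t$ and $Y=\mathrm{cn}(t-\theta_1)$, I would substitute
\begin{equation*}
	\mathrm{cn}(t-\theta_1)=\frac{\mathrm{cn}\,t\,\mathrm{cn}\,\theta_1+\mathrm{sn}\,t\,\mathrm{sn}\,\theta_1\,\mathrm{dn}\,t\,\mathrm{dn}\,\theta_1}{1-k^2\mathrm{sn}^2 t\,\mathrm{sn}^2\theta_1}
\end{equation*}
and verify that the biquadratic in the statement holds identically in $t$, again using only the two fundamental relations. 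With this identity in hand, I would match the coefficients of $X^2Y^2$, of $XY$, and of the constant term (the $X^2$ and $Y^2$ coefficients both being normalized to $1$) against the normalized coefficients of \eqref{eq: elliptic adjacent}. This produces the three displayed equations for $\theta_1$; to solve them I would propose $\mathrm{dn}^2\theta_1=(\sigma-\alpha)(\sigma-\gamma)/(\alpha\gamma)$ and check that all three are then simultaneously satisfied, the key step being the identity $(\sigma-\alpha)(\sigma-\gamma)-\alpha\gamma=\sigma(\sigma-\alpha-\gamma)$ from Proposition \ref{prop: identities}. Part [4] runs identically with $\mathrm{sn}$ in place of $\mathrm{cn}$ and $M\in(0,1)$.

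The step I expect to be the main obstacle is fixing the \emph{range} of $\theta_1$ rather than any single computation. From $(\sigma-\alpha)(\sigma-\gamma)-\alpha\gamma=\sigma(\sigma-\alpha-\gamma)$ one reads off $\mathrm{dn}^2\theta_1>1$ exactly when $\alpha+\gamma<\sigma$; since $\mathrm{dn}$ exceeds $1$ only along the imaginary axis, where $\mathrm{dn}(i\tau;k)=\mathrm{dn}(\tau;k')/\mathrm{cn}(\tau;k')$ increases from $1$ to $+\infty$ as $\tau$ runs over $(0,K')$, this forces $\theta_1\in(0,iK')$. When instead $\alpha+\gamma>\sigma$ the same determination lands outside the admissible strip, and I would apply the cyclic relabelling $(\alpha,\beta,\gamma,\delta,x,y,z,w)\to(\delta,\alpha,\beta,\gamma,w,x,y,z)$, which interchanges the roles of $\alpha\gamma$ and $\beta\delta$ and restores $\mathrm{dn}\theta_1=\sqrt{(\sigma-\beta)(\sigma-\delta)/(\beta\delta)}$ in the correct range. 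Tracking these branch and sign conditions consistently, rather than the addition-formula algebra itself, is where the real care is needed.
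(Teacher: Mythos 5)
Your proposal is correct and follows essentially the same route as the paper: parts [1]--[2] by direct substitution of the half-period translates, and parts [3]--[4] via the $\mathrm{cn}$/$\mathrm{sn}$ addition formulas followed by matching the normalized coefficients of Equation \eqref{eq: elliptic adjacent} to determine $\theta_1$, with the identity $(\sigma-\alpha)(\sigma-\gamma)-\alpha\gamma=\sigma(\sigma-\alpha-\gamma)$ fixing $\mathrm{dn}\,\theta_1$ and its range $(0,iK')$. The only cosmetic difference is that the paper derives the symmetric biquadratic by writing both addition formulas, squaring each into a polynomial in $u=\mathrm{cn}\,t$, $v=\mathrm{cn}(t-\theta_1)$, subtracting and dividing by $k^2\mathrm{sn}^2\theta_1\mathrm{dn}^2\theta_1(u^2-v^2)$, whereas you verify the same identity by direct substitution (which requires separately annihilating the part odd in $\mathrm{sn}\,t\,\mathrm{dn}\,t$), but both are routine and equivalent.
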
 
\begin{proof}
	[Proof] [1] and [2] can be verified by direct calculation. [3] and [4] could be proved by rewritten the addition formulas.
	\begin{equation*}
		\begin{gathered}
			\mathrm{cn}(t-\theta_1) = \dfrac{\mathrm{cn}(t) \mathrm{cn}(\theta_1) + \mathrm{sn}(t) \mathrm{sn}(\theta_1) \mathrm{dn}(t) \mathrm{dn}(\theta_1)}{1-k^2\mathrm{sn}^2(t) \mathrm{sn}^2(\theta_1)} \\
			\mathrm{cn}(t) = \dfrac{\mathrm{cn}(t-\theta_1) \mathrm{cn}(\theta_1) - \mathrm{sn}(t-\theta_1) \mathrm{sn}(\theta_1) \mathrm{dn}(t-\theta_1) \mathrm{dn}(\theta_1)}{1-k^2\mathrm{sn}^2(t-\theta_1) \mathrm{sn}^2(\theta_1)} \\
		\end{gathered}
	\end{equation*}
	Let $u = \mathrm{cn}t, ~v = \mathrm{cn}(t-\theta_1)$, and reform them into polynomials:
	\begin{equation*}
		\begin{gathered}
			\left([1-k^2(1-u^2)\mathrm{sn}^2 \theta_1] v - u \mathrm{cn} \theta_1 \right)^2 - (1-u^2)\mathrm{sn}^2 \theta_1(1-k^2+k^2u^2)\mathrm{dn}^2 \theta_1 = 0 \\
			\left([1-k^2(1-v^2)\mathrm{sn}^2 \theta_1] u - v \mathrm{cn} \theta_1 \right)^2 - (1-v^2)\mathrm{sn}^2 \theta_1(1-k^2+k^2v^2)\mathrm{dn}^2 \theta_1 = 0
		\end{gathered}
	\end{equation*}
	Subtract them and divide by $k^2 \mathrm{sn}^2 \theta_1 \mathrm{dn}^2 \theta_1 (u^2-v^2)$ we would obtain (3). Further,
	\begin{equation*}
		\begin{gathered}
			\mathrm{sn}(t-\theta_1) = \dfrac{\mathrm{sn} t \mathrm{cn} \theta_1 \mathrm{dn} \theta_1 - \mathrm{sn} \theta_1  \mathrm{cn} t  \mathrm{dn} t }{1-k^2\mathrm{sn}^2 t \mathrm{sn}^2 \theta_1} \\
			\mathrm{sn} t = \dfrac{\mathrm{sn} (t-\theta_1) \mathrm{cn} \theta_1 \mathrm{dn} \theta_1 + \mathrm{sn} \theta_1  \mathrm{cn} (t-\theta_1)  \mathrm{dn} (t-\theta_1) }{1-k^2\mathrm{sn}^2 (t-\theta_1) \mathrm{sn}^2 \theta_1} \\
		\end{gathered}
	\end{equation*}  
	Let $u = \mathrm{sn}t, ~v = \mathrm{sn}(t-\theta_1)$, and reform them into polynomials:
	\begin{equation*}
		\begin{gathered}
			\left([1-k^2u^2\mathrm{sn}^2 \theta_1] v - u \mathrm{cn} \theta_1 \mathrm{dn} \theta_1 \right)^2 - \mathrm{sn}^2 \theta_1(1-u^2)(1-k^2u^2) = 0 \\
			\left([1-k^2v^2\mathrm{sn}^2 \theta_1] u - v \mathrm{cn} \theta_1 \mathrm{dn} \theta_1 \right)^2 - \mathrm{sn}^2 \theta_1(1-v^2)(1-k^2v^2) = 0
		\end{gathered}
	\end{equation*}
	Subtract them and divide by $k^2 \mathrm{sn}^2 \theta_1 (u^2-v^2)$ we would obtain (4). 
\end{proof}

Similar to the derivation in Conic I, it is still necessary to clarify the magnitudes of $\alpha, ~\beta, ~\gamma, ~\delta$.
\begin{prop}
	The relation between the magnitude of $\alpha, ~ \beta, ~\gamma, ~\delta$ and the signs of amplitudes $p_x, ~p_y, ~p_z, ~p_w$. Note that $\mathrm{max} = \mathrm{max}(\alpha, ~ \beta, ~\gamma, ~\delta)$. $\mathrm{min} = \mathrm{min}(\alpha, ~ \beta, ~\gamma, ~\delta)$. 
	\begin{equation*}
		\begin{gathered}
			\mathrm{max} + \mathrm{min} > \sigma \Leftrightarrow M>1 \\
			\mathrm{max} + \mathrm{min} < \sigma \Leftrightarrow M<1
		\end{gathered}
	\end{equation*}
	\begin{enumerate} [label={[\arabic*]}]
		\item $M > 1$ and $\alpha + \gamma < \sigma$ imply $\beta = \max$ or $\delta = \max$:
		\begin{enumerate} [label={[\alph*]}]
			\item $\beta = \max \Leftrightarrow \alpha \beta > (\sigma-\alpha)(\sigma-\beta)$, $\beta \gamma > (\sigma-\beta)(\sigma-\gamma)$
			\item $\delta = \max \Leftrightarrow \alpha \beta < (\sigma-\alpha)(\sigma-\beta)$, $\beta \gamma < (\sigma-\beta)(\sigma-\gamma)$
		\end{enumerate}
		\item $M > 1$ and $\alpha + \gamma > \sigma$ imply $\alpha = \max$ or $\gamma = \max$:
		\begin{enumerate} [label={[\alph*]}]
			\item $\alpha = \max \Leftrightarrow \alpha \beta > (\sigma-\alpha)(\sigma-\beta)$, $\beta \gamma < (\sigma-\beta)(\sigma-\gamma)$
			\item $\gamma = \max \Leftrightarrow \alpha \beta < (\sigma-\alpha)(\sigma-\beta)$, $\beta \gamma > (\sigma-\beta)(\sigma-\gamma)$
		\end{enumerate}
		\item $M < 1$ and $\alpha + \gamma > \sigma$ imply $\beta = \min$ or $\delta = \min$:
		\begin{enumerate} [label={[\alph*]}]
			\item $\beta = \min \Leftrightarrow \alpha \beta < (\sigma-\alpha)(\sigma-\beta)$, $\beta \gamma < (\sigma-\beta)(\sigma-\gamma)$
			\item $\delta = \min \Leftrightarrow \alpha \beta > (\sigma-\alpha)(\sigma-\beta)$, $\beta \gamma > (\sigma-\beta)(\sigma-\gamma)$
		\end{enumerate}
		\item $M < 1$ and $\alpha + \gamma < \sigma$ imply $\alpha = \min$ or $\gamma = \min$:
		\begin{enumerate} [label={[\alph*]}]
			\item $\alpha = \min \Leftrightarrow \alpha \beta < (\sigma-\alpha)(\sigma-\beta)$, $\beta \gamma > (\sigma-\beta)(\sigma-\gamma)$
			\item $\gamma = \min \Leftrightarrow \alpha \beta > (\sigma-\alpha)(\sigma-\beta)$, $\beta \gamma < (\sigma-\beta)(\sigma-\gamma)$
		\end{enumerate}
	\end{enumerate}
\end{prop}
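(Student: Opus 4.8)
The plan is to translate every inequality into a comparison among the three pairwise sums $\alpha+\beta$ versus $\gamma+\delta$, $\alpha+\gamma$ versus $\beta+\delta$, and $\beta+\gamma$ versus $\alpha+\delta$, and then read off the magnitudes of $\alpha,\beta,\gamma,\delta$ from them; the elliptic nondegeneracy conditions guarantee all three comparisons are strict. First I would settle the two equivalences involving $M$. By identity [5] of Proposition~\ref{prop: identities}, $\alpha\beta\gamma\delta-(\sigma-\alpha)(\sigma-\beta)(\sigma-\gamma)(\sigma-\delta)=\sigma(\sigma-\alpha-\beta)(\sigma-\alpha-\gamma)(\sigma-\beta-\gamma)$, so since $\sigma>0$ the condition $M>1$ is equivalent to $(\sigma-\alpha-\beta)(\sigma-\alpha-\gamma)(\sigma-\beta-\gamma)>0$. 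Because the left-hand side of identity [5] is symmetric and $\sigma$ is symmetric, this cubic product is invariant under all permutations of $\alpha,\beta,\gamma,\delta$, so I may sort the lengths as $a\le b\le c\le d$. With that labelling two factors, $\sigma-a-b$ and $\sigma-a-c$, are automatically positive, while the third, $\sigma-b-c$, has the sign of $a+d-b-c$. Hence $M>1\Leftrightarrow a+d>b+c\Leftrightarrow \mathrm{max}+\mathrm{min}>\sigma$, and the reverse inequality likewise.

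Next I would prove the ``which pair holds the extreme'' claims [1]--[4] using a one-line lemma: for any splitting of the four lengths into two pairs that separates $\mathrm{max}$ from $\mathrm{min}$ (a \emph{mixed} split), the pair containing $\mathrm{max}$ has the strictly larger sum, since $\mathrm{max}\ge(\text{either middle})$ and $(\text{either middle})\ge\mathrm{min}$. Take case [1], where $M>1$ and $\alpha+\gamma<\sigma$. The hypothesis $\alpha+\gamma<\sigma$ says $\{\alpha,\gamma\}$ is the smaller side of the split $\{\alpha,\gamma\}\mid\{\beta,\delta\}$; it cannot be the $\{\mathrm{max},\mathrm{min}\}$ pair, because $M>1$ makes that pair the larger one. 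So the split is mixed, and by the lemma its smaller side avoids $\mathrm{max}$, giving $\mathrm{max}\in\{\beta,\delta\}$. The remaining cases follow from the same dichotomy: when the sign of $M-1$ agrees with the size of $\alpha+\gamma$ relative to $\sigma$ (as in case [2]), the pair $\{\alpha,\gamma\}$ \emph{may} equal $\{\mathrm{max},\mathrm{min}\}$, and in either sub-possibility the extreme is pinned to $\{\alpha,\gamma\}$; this is exactly what distinguishes [2] from [1] and [4] from [3].

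Finally I would establish the sub-biconditionals [1a] and [1b], the others being identical after relabelling. Identity [2] gives $(\sigma-\alpha)(\sigma-\beta)-\alpha\beta=\sigma(\sigma-\alpha-\beta)$, hence $\alpha\beta>(\sigma-\alpha)(\sigma-\beta)\Leftrightarrow\alpha+\beta>\gamma+\delta$ and, analogously, $\beta\gamma>(\sigma-\beta)(\sigma-\gamma)\Leftrightarrow\beta+\gamma>\alpha+\delta$. For the forward direction of [1a], if $\beta=\mathrm{max}$ then in each of the splits $\{\alpha,\beta\}\mid\{\gamma,\delta\}$ and $\{\beta,\gamma\}\mid\{\alpha,\delta\}$ the side carrying $\beta$ is the larger one: either it also carries $\mathrm{min}$, in which case $M>1$ makes it larger, or the split is mixed, in which case the lemma makes it larger. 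This yields $\alpha+\beta>\gamma+\delta$ and $\beta+\gamma>\alpha+\delta$, i.e.\ the two product inequalities. Conversely, adding $\alpha+\beta>\gamma+\delta$ and $\beta+\gamma>\alpha+\delta$ gives $\beta>\delta$, and combined with $\mathrm{max}\in\{\beta,\delta\}$ from the case hypothesis this forces $\beta=\mathrm{max}$. Statement [1b] is the complementary computation with $\delta$, and cases [2]--[4] repeat the argument verbatim, with $\mathrm{max}$ replaced by $\mathrm{min}$ and the pair $\{\alpha,\gamma\}$ taking the opposite (larger) role when $M<1$.

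The only delicate point is bookkeeping: keeping straight which of the three pairwise-sum comparisons corresponds to each product inequality and to each case hypothesis, and resolving the ``mixed versus $\{\mathrm{max},\mathrm{min}\}$'' dichotomy correctly in all four cases. Once the equivalence $M\gtrless1\Leftrightarrow\mathrm{max}+\mathrm{min}\gtrless\sigma$ and the mixed-split lemma are in hand, everything else is a finite, symmetric verification, so I expect no genuine analytic difficulty — only the need to organise the permutation symmetry so that the four cases and their sub-cases are not written out redundantly.
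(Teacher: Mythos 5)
Your argument is correct, and it is worth noting that the paper states this proposition without any proof (it is presented, like the analogous magnitude proposition in the Conic~I section, as a fact to be verified directly), so your write-up supplies a justification the paper omits. The three reductions you use are exactly the right ones: identity [5] of Proposition~\ref{prop: identities} turns $M\gtrless 1$ into the sign of $(\sigma-\alpha-\beta)(\sigma-\alpha-\gamma)(\sigma-\beta-\gamma)$, whose full permutation symmetry lets you sort the lengths and identify the sign with that of $\mathrm{max}+\mathrm{min}-\sigma$; identity [2] converts each product inequality such as $\alpha\beta\gtrless(\sigma-\alpha)(\sigma-\beta)$ into the pairwise-sum comparison $\alpha+\beta\gtrless\gamma+\delta$; and the mixed-split lemma, combined with the $M$ condition for the $\{\mathrm{max},\mathrm{min}\}$-together split, handles every forward implication, while adding the two sum inequalities (e.g. $\alpha+\beta>\gamma+\delta$ and $\beta+\gamma>\alpha+\delta$ give $\beta>\delta$) closes each biconditional. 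Two small points of bookkeeping. First, in case [1] you infer ``the split is mixed'' from ``$\{\alpha,\gamma\}$ is the smaller side and is not the $\{\mathrm{max},\mathrm{min}\}$ pair''; the split could instead fail to be mixed because $\{\beta,\delta\}$ is the $\{\mathrm{max},\mathrm{min}\}$ pair, but in that subcase $\mathrm{max}\in\{\beta,\delta\}$ holds trivially, so the conclusion survives — the subcase should just be stated. Second, you silently assume the maximum (resp.\ minimum) is achieved by a unique length; this is in fact automatic, since if two lengths tie for the maximum then $\mathrm{max}+\mathrm{min}\le\sigma$, which with the elliptic nondegeneracy forces $M<1$, so no tie can occur in cases [1]--[2], and dually for the minimum in cases [3]--[4]. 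With those two remarks added, the proof is complete.
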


Now we are ready to give the solutions for both the real and complexified configuration space. 

If $M > 1$:
\begin{equation*}
	\begin{gathered}
		\begin{dcases}
			x = p_x \mathrm{cn}(t; ~k) \\
			y = p_y \mathrm{cn}(t - \theta_1; ~k) \\
			z = p_z \mathrm{cn}(t+K; ~k) \\
			w = p_w \mathrm{cn}(t - \theta_2; ~k) \\ 
		\end{dcases}, ~~ \begin{dcases}
			k = \sqrt{1-\dfrac{1}{M}}, ~~k' = \sqrt{\dfrac{1}{M}} \\
			K = \bigintsss_{0}^{\frac{\pi}{2}} \dfrac{\dif \psi}{\sqrt{1 - k^2 \sin^2 \psi}}
		\end{dcases} 
	\end{gathered}
\end{equation*}
To calculate $s$ with $x$:
\begin{equation*}
	\begin{gathered}
		\mathrm{cn}(is; ~k) = \dfrac{1}{\mathrm{cn}(s; ~k')}, ~~
		\mathrm{cn}(K + is; ~k) = -ik'\dfrac{\mathrm{sn}(s; ~k')}{\mathrm{dn}(s; ~k')} \\
	\end{gathered}
\end{equation*} 
\bgroup
\def\arraystretch{2}
\begin{center}
	\begin{tabular}{|c|c|c|}
		\hline
		\makecell{Choice of $t$ for \\ real configurations} &  Branch 1 & Branch 2 \\ 
		\hline
		\makecell{$p_x \in \mathbb{R}^+$ \\ $p_z \in i\mathbb{R}^+$} & \makecell {$is$, $s \in (0, +\infty)$ \\ $2K + is$, $s \in (0, +\infty)$ \\ when $xz \neq 0$, $xz>0$ \\ this branch `snaps' at $x = \infty$ \\ and `snaps' at $x = 0$} & \makecell {$is$, $s \in (-\infty, 0)$ \\ $2K + is$, $s \in (-\infty, 0)$ \\ when $xz \neq 0$, $xz<0$ \\ this branch `snaps' at $x = \infty$ \\ and `snaps' at $x = 0$} \\
		\hline
		\makecell{$p_x \in i\mathbb{R}^+$ \\ $p_z \in \mathbb{R}^+$} & \makecell {$3K + is$, $s \in (0, +\infty) $ \\ $K + is$, $s \in (0, +\infty) $ \\ when $xz \neq 0$, $xz>0$ \\ this branch `snaps' at $x = \infty$ \\ and `snaps' at $x = 0$} & \makecell {$K + is$, $s \in (-\infty, 0)$ \\ $3K + is$, $s \in (-\infty, 0)$ \\ when $xz \neq 0$, $xz<0$ \\ this branch `snaps' at $x = \infty$ \\ and `snaps' at $x = 0$} \\  
		\hline      
	\end{tabular}
\end{center}
for phase shift:
\begin{center}
	\begin{tabular}{|c|c|c|}
		\hline
		&  $\theta_1$ & $\theta_2$ \\
		\hline
		$p_x \in \mathbb{R}^+, ~p_y \in \mathbb{R}^+$ & $i\mathrm{dc}^{-1}\left(\sqrt{\dfrac{(\sigma - \alpha)(\sigma - \gamma)}{\alpha \gamma}};~k'\right)$ & $K + \theta_1$ \\
		\hline
		$p_y \in \mathbb{R}^+, ~p_z \in \mathbb{R}^+$ & $-K + i\mathrm{dc}^{-1}\left(\sqrt{\dfrac{(\sigma - \beta)(\sigma - \delta)}{\beta \delta}};~k'\right)$ & $K + \theta_1$\\ 
		\hline 
		$p_z \in \mathbb{R}^+, ~p_w \in \mathbb{R}^+$ & $ i\mathrm{dc}^{-1}\left(\sqrt{\dfrac{(\sigma - \alpha)(\sigma - \gamma)}{\alpha \gamma}};~k'\right)$ & $-K + \theta_1$ \\
		\hline
		$p_w \in \mathbb{R}^+, ~p_x \in \mathbb{R}^+$ & $ K + i\mathrm{dc}^{-1}\left(\sqrt{\dfrac{(\sigma - \beta)(\sigma - \delta)}{\beta \delta}};~k'\right)$ & $-K + \theta_1$ \\
		\hline      
	\end{tabular}
\end{center}
\begin{equation*}
	\begin{aligned}
		\mathrm{dn} \theta_1 = \dfrac{\mathrm{dn}(\theta_1';~k')}{\mathrm{cn}(\theta_1';~k')} = \mathrm{dc}(\theta_1';~k') \mathrm{~~when~~} \theta_1 \in (0, iK') 
	\end{aligned}
\end{equation*}
\egroup

If $M < 1$:
\begin{equation*}
	\begin{gathered}
		\begin{dcases}
			x = p_x \mathrm{sn}(t; ~k) \\
			y = p_y \mathrm{sn}(t - \theta_1; ~k) \\
			z = p_z \mathrm{sn}(t+K; ~k) \\
			w = p_w \mathrm{sn}(t - \theta_2; ~k) \\ 
		\end{dcases}, ~~ \begin{dcases}
			k = \sqrt{1-M}, ~~ k' = \sqrt{M}\\
			K = \bigintsss_{0}^{\frac{\pi}{2}} \dfrac{\dif \psi}{\sqrt{1 - k^2 \sin^2 \psi}}
		\end{dcases}
	\end{gathered}
\end{equation*}
To calculate $s$ with $x$:
\begin{equation*}
	\begin{gathered}
		\mathrm{sn}(K + is; ~k) = \dfrac{1}{\mathrm{dn}(s; ~k')}, ~~ \mathrm{sn}(is; ~k) = i \dfrac{\mathrm{sn}(s; ~k')}{\mathrm{cn}(s; ~k')}
	\end{gathered}
\end{equation*} 

\bgroup
\def\arraystretch{2}
\begin{center}
	\begin{tabular}{|c|c|c|}
		\hline
		\makecell{Choice of $t$ for \\ real configurations} &  Branch 1 & Branch 2 \\ 
		\hline
		\makecell{$p_x \in \mathbb{R}^+$ \\ $p_z \in i\mathbb{R}^+$} & \makecell {$K + is$, $s \in (0, +\infty)$ \\ $3K + is$, $s \in (0, +\infty)$ \\ when $xz \neq 0$, $xz>0$ \\ this branch `snaps' at $x = \infty$ \\ and `snaps' at $x = 0$} & \makecell {$K + is$, $s \in (-\infty, 0)$ \\ $3K + is$, $s \in (-\infty, 0)$ \\ when $xz \neq 0$, $xz<0$ \\ this branch `snaps' at $x = \infty$ \\ and `snaps' at $x = 0$} \\
		\hline
		\makecell{$p_x \in i\mathbb{R}^+$ \\ $p_z \in \mathbb{R}^+$} & \makecell {$is$, $s \in (0, +\infty)$ \\ $2K + is$, $s \in (0, +\infty)$ \\ when $xz \neq 0$, $xz>0$ \\ this branch `snaps' at $x = \infty$ \\ and `snaps' at $x = 0$} & \makecell {$2K + is$, $s \in (-\infty, 0)$ \\ $is$, $s \in (-\infty, 0)$ \\ when $xz \neq 0$, $xz<0$ \\ this branch `snaps' at $x = \infty$ \\ and `snaps' at $x = 0$} \\  
		\hline      
	\end{tabular}
\end{center}
for phase shift:
\begin{equation*}
	\begin{gathered}
		\mathrm{dn} (\theta_1; ~k) = \sqrt{\dfrac{\alpha \gamma}{(\sigma - \alpha)(\sigma - \gamma)}} ~~\mathrm{or}~~ \sqrt{\dfrac{\beta \delta}{(\sigma - \beta)(\sigma - \delta)}} \\
		\mathrm{dn} (\theta_1; ~k) = \dfrac{\mathrm{dn}(\theta_1';~k')}{\mathrm{cn}(\theta_1';~k')} = \mathrm{dc}(\theta_1';~k') \mathrm{~~when~~} \theta_1 = i \theta_1' \in (0, K') \\
	\end{gathered}
\end{equation*}
\begin{center}
	\begin{tabular}{|c|c|c|}
		\hline
		&  $\theta_1$ & $\theta_2$ \\
		\hline
		$p_x \in \mathbb{R}^+, ~p_y \in \mathbb{R}^+$ & $i\mathrm{dc}^{-1}\left(\sqrt{\dfrac{\alpha \gamma}{(\sigma - \alpha)(\sigma - \gamma)}};~k'\right)$ & $K + \theta_1$ \\
		\hline
		$p_y \in \mathbb{R}^+, ~p_z \in \mathbb{R}^+$ & $-K + i\mathrm{dc}^{-1}\left(\sqrt{\dfrac{\beta \delta}{(\sigma - \beta)(\sigma - \delta)}};~k'\right)$ & $K + \theta_1$\\ 
		\hline 
		$p_z \in \mathbb{R}^+, ~p_w \in \mathbb{R}^+$ & $ i\mathrm{dc}^{-1}\left(\sqrt{\dfrac{\alpha \gamma}{(\sigma - \alpha)(\sigma - \gamma)}};~k'\right)$ & $-K + \theta_1$ \\
		\hline
		$p_w \in \mathbb{R}^+, ~p_x \in \mathbb{R}^+$ & $ K + i\mathrm{dc}^{-1}\left(\sqrt{\dfrac{\beta \delta}{(\sigma - \beta)(\sigma - \delta)}};~k'\right)$ & $-K + \theta_1$ \\
		\hline      
	\end{tabular}
\end{center}
\egroup

The results in Elliptic I and Elliptic II are formally symmetric, and have a good correspondence with Deltoid II, Conic I, Conic II and Conic III.

\section{Orthodiagonal: finite solution}

The condition on the bar lengths is:
\begin{equation*} 
	f_{22} \neq 0, ~~ f_{20} \neq 0 , ~~ f_{02} \neq 0 ~~ \Leftrightarrow ~~ \begin{dcases}
		\alpha - \beta + \gamma - \delta \neq 0 \\
		\alpha - \beta - \gamma + \delta \neq 0 \\
		\alpha + \beta - \gamma - \delta \neq 0
	\end{dcases}
\end{equation*}
and
\begin{equation*}
	\alpha^2 + \gamma^2 = \beta^2 + \delta^2 \Leftrightarrow (\sigma - \alpha)(\sigma - \gamma) = (\sigma - \beta)(\sigma - \delta)
\end{equation*}
From the helpful identities listed in Proposition \ref{prop: identities}, we have the following simplified Equations \eqref{eq: planar 4-bar linkage} and \eqref{eq: opposite rotational angle}:
\begin{equation*}
	\begin{gathered}
		f(\alpha, ~\beta, ~\gamma, ~\delta, ~x, ~y) = f_{22} x^2y^2 + f_{20}x^2 + 2f_{11}xy + f_{02}y^2 + f_{00} = 0 \\
		f_{22} = (\sigma-\beta)(\sigma-\beta-\delta) = \dfrac{1}{2}(\beta - \alpha)(\beta - \gamma)\\
		f_{20} = (\sigma-\alpha)(\sigma-\alpha-\delta) = \dfrac{1}{2}(\beta - \alpha)(\beta + \gamma)\\
		f_{11} = - \alpha \gamma \\
		f_{02} = (\sigma-\gamma)(\sigma-\gamma-\delta) = \dfrac{1}{2}(\beta + \alpha)(\beta - \gamma) \\
		f_{00}  = 
		\sigma(\sigma-\delta) = \dfrac{1}{2}(\beta + \alpha)(\beta + \gamma) \\
		\sigma=\dfrac{\alpha+\beta+\gamma+\delta}{2}
	\end{gathered}
\end{equation*}
That is to say $x$ and $y$ are separable:
\begin{equation*}
	f(\alpha, ~\beta, ~\gamma, ~\delta, ~x, ~y) = 0 \Leftrightarrow \left( (\beta - \alpha)x + \dfrac{(\beta + \alpha)}{x}  \right) \left( (\beta - \gamma)y + \dfrac{(\beta + \gamma)}{y}  \right) =  4\alpha \gamma
\end{equation*}
Next, for opposite rotational angles:
\begin{equation*} 
	g(\alpha, ~\beta, ~\gamma, ~\delta, ~x, ~ z)=g_{22}x^2z^2+g_{20}x^2+g_{02}z^2+g_{00}=0
\end{equation*}
\begin{equation*} 
	\begin{gathered}
		g_{22}= (\sigma-\alpha-\beta)(\sigma-\beta-\gamma) = \dfrac{\gamma (\gamma-\delta)-\beta(\beta-\alpha) }{2} \\
		g_{20} =  (\sigma-\alpha) (\sigma-\beta) = \dfrac{\gamma (\gamma+\delta)-\beta(\beta-\alpha) }{2} \\
		g_{02} =  -(\sigma-\gamma) (\sigma-\delta) = \dfrac{\gamma (\gamma-\delta)-\beta(\beta+\alpha)}{2}\\
		g_{00} =  \sigma (\sigma - \alpha - \beta) = \dfrac{\gamma (\gamma+\delta)-\beta(\beta+\alpha)}{2}\\
	\end{gathered}
\end{equation*}
In the parametrized expression, we could obtain $\alpha^2 + \gamma^2 = \beta^2 + \delta^2 \Rightarrow M<1$, since
\begin{equation*}
	\begin{aligned}
		1-M = & \dfrac{-\sigma(\sigma-\alpha-\beta)(\sigma-\alpha-\gamma)(\sigma-\beta-\gamma)}{(\sigma-\alpha)(\sigma-\beta)(\sigma-\gamma)(\sigma-\delta)} \\
		& = \left(\dfrac{\alpha\gamma - \beta\delta}{\alpha\gamma + \beta\delta}\right)^2
	\end{aligned}
\end{equation*}
and
\begin{equation*}
	k = \sqrt{1-M} = \dfrac{|\alpha\gamma - \beta\delta|}{\alpha\gamma + \beta\delta} 
\end{equation*}
Further we could say that $\theta_1'$ is half of the quarter period $K'$ of elliptic functions with modulus $k'$:
\begin{equation*}
	\theta_1' = \dfrac{K'}{2}
\end{equation*}
This is because for half-quarter-periods
\begin{equation*}
	\mathrm{dc}\left(\dfrac{K'}{2}; ~k'\right) = \sqrt{1 + k}
\end{equation*}
If $\alpha\gamma > \beta\delta$, 
\begin{equation*}
	\begin{aligned}
		\mathrm{dc}\left(\dfrac{K'}{2}; ~k'\right) = \sqrt{\dfrac{2 \alpha \gamma}{\alpha\gamma + \beta\delta}} = \sqrt{\dfrac{\alpha \gamma}{(\sigma - \alpha)(\sigma - \gamma)}}
	\end{aligned}
\end{equation*}
If $\alpha\gamma < \beta\delta$, 
\begin{equation*}
	\begin{aligned}
		\mathrm{dc}\left(\dfrac{K'}{2}; ~k'\right) = \sqrt{\dfrac{2 \beta \delta}{\alpha\gamma + \beta\delta}} = \sqrt{\dfrac{\beta \delta}{(\sigma - \beta)(\sigma - \delta)}}
	\end{aligned}
\end{equation*}

\section{Rhombus: solution at infinity}

The condition on linkage lengths is 
\begin{equation*}
	f_{22}=0, ~~ f_{20}=0,~~f_{02}=0 ~~ \Leftrightarrow ~~  \alpha=\beta=\gamma=\delta
\end{equation*}
which means:
\begin{equation*}
	\begin{aligned}
		\begin{cases}
			(x_1y_1-x_2y_2)x_2y_2=0 \\
			(x_1w_1-x_2w_2)x_2w_2=0 \\
			x_1^2z_2^2-x_2^2z_1^2=0 \\
			y_1^2w_2^2-y_2^2w_1^2=0
		\end{cases} 
		~~ \Rightarrow ~~ & \begin{cases}
			x_1 \neq 0, ~~ x_2=0 \\
			z_1 \neq 0, ~~ z_2=0 \\
			w = \pm y
		\end{cases} \mathrm{or} \quad \begin{cases}
			y_1 \neq 0, ~~ y_2=0 \\
			w_1 \neq 0, ~~ w_2=0 \\
			x = \pm z
		\end{cases} \\
		~~ \Rightarrow ~~ & \begin{cases}
			x = \infty \\
			z = \infty \\
			w = \pm y
		\end{cases}  \mathrm{or} \quad \begin{cases}
			y = \infty \\
			w = \infty \\
			x =\pm z
		\end{cases}
	\end{aligned}
\end{equation*}
From post-examination, the solutions at infinity are these two branches, each of which is diffeomorphic to a circle $S^1$.
\begin{equation*}
	\begin{cases}
		x \in \mathbb{R} \cup \{\infty\} \\
		y \equiv \infty \\
		z = -x \\ 
		w \equiv \infty \\
	\end{cases} \mathrm{or} \quad \begin{cases}
		x \equiv \infty \\
		y \in \mathbb{R} \cup \{\infty\} \\
		z \equiv \infty \\ 
		w = -y \\
	\end{cases}	 
\end{equation*}

\section{Isogram: solution at infinity}

The condition on linkage lengths is 	
\begin{equation*}
	f_{22} \neq 0, ~~ f_{20}=0,~~f_{02}=0 ~~ \Leftrightarrow ~~ \gamma=\alpha, ~~ \delta=\beta, ~~\beta \neq \alpha
\end{equation*}
which means:
\begin{equation*}
	\begin{aligned}
		& \begin{cases}
			(\alpha-\beta)x_1^2y_1^2-2\alpha x_1y_1x_2y_2+(\alpha+\beta)x_2^2y_2^2 =0 \\
			x_1^2z_2^2-x_2^2z_1^2=0 \\
			(\beta-\alpha)x_1^2w_1^2-2\beta x_1w_1x_2w_2+(\beta+\alpha)x_2^2w_2^2 =0 \\
		\end{cases} \\
		~~ \Rightarrow ~~ & \begin{cases}
			x_1 \neq 0, ~~ x_2=0 \\
			y_1 = 0, ~~ y_2 \neq 0 \\
			z_1 \neq 0, ~~ z_2=0 \\
			w_1 = 0, ~~ w_2 \neq 0 \\
		\end{cases} \mathrm{or} \quad \begin{cases}
			y_1 \neq 0, ~~ y_2=0 \\
			z_1 = 0, ~~ z_2 \neq 0 \\
			w_1 \neq 0, ~~ w_2=0 \\
			x_1 = 0, ~~ x_2 \neq 0 \\
		\end{cases} \\
		~~ \Rightarrow ~~ & \begin{cases}
			x = \infty \\
			y = 0 \\
			z = \infty \\
			w = 0
		\end{cases}  \mathrm{or} \quad \begin{cases}
			x = 0 \\
			y = \infty \\
			z = 0 \\
			w = \infty \\
		\end{cases}
	\end{aligned}
\end{equation*}
These two points are exactly the limit points of the two branches in the finite solution.

\section{Deltoid I: solution at infinity}

The condition on linkage lengths is
\begin{equation*}
	f_{22}=0, ~~ f_{20} \neq 0, ~~f_{02}=0 ~~ \Leftrightarrow ~~ \delta=\alpha, ~~ \gamma=\beta, ~~\beta \neq \alpha
\end{equation*}
which means:
\begin{equation*}
	\begin{aligned}
		& \begin{cases}
			(\beta-\alpha)x_1^2y_2^2-2\alpha x_1y_1x_2y_2+(\beta+\alpha)x_2^2y_2^2 =0 \\
			x_1^2z_2^2-x_2^2z_1^2=0 \\
			(\alpha-\beta)x_1^2w_2^2-2\beta x_1w_1x_2w_2+(\alpha+\beta)x_2^2w_2^2 =0 \\
		\end{cases} \\
		~~ \Rightarrow ~~ & \begin{cases}
			y_1 \neq 0, ~~ y_2 = 0 \\
			z = \pm x	\\
			w_1 \neq 0, ~~ w_2 = 0 \\
		\end{cases}
		\Rightarrow ~~  \begin{cases}
			y = \infty \\
			z = \pm x \\
			w = \infty \\	
		\end{cases}
	\end{aligned}
\end{equation*}
From post-examination, the solutions at infinity becomes another branch diffeomorphic to a circle $S^1$:
\begin{equation*}
	\begin{cases}
		x \in \mathbb{R} \cup \{\infty\} \\
		y \equiv \infty \\
		z = -x \\ 
		w \equiv \infty \\
	\end{cases}
\end{equation*}

\section{Deltoid II: solution at infinity}

The condition on linkage lengths is 	
\begin{equation*}
	f_{22}=0, ~~ f_{20}=0 , ~~ f_{02} \neq 0 ~~ \Leftrightarrow ~~ \alpha=\beta, ~~ \delta=\gamma, ~~\gamma \neq \beta
\end{equation*}
which means:
\begin{equation*}
	\begin{aligned}
		& \begin{cases}
			(\beta-\gamma)x_2^2y_1^2-2\gamma x_1y_1x_2y_2+(\beta+\gamma)x_2^2y_2^2 =0 \\
			\beta^2 x_2^2 (z_1^2+z_2^2)=\gamma^2z_2^2(x_1^2+x_2^2) \\
			(\beta-\gamma)x_2^2w_1^2-2\gamma x_1w_1x_2w_2+(\beta+\gamma)x_2^2w_2^2 =0 \\	
			w_1^2y_2^2-w_2^2y_1^2=0 
		\end{cases} \\
		~~ \Rightarrow ~~ & \begin{cases}
			x_1 \neq 0, ~~ x_2 = 0 \\
			y = \pm w \\
			z_1 \neq 0, ~~ z_2 = 0 \\	
		\end{cases} 
		~~ \Rightarrow ~~ \begin{cases}
			x = \infty \\
			y = \pm w  \\
			z = \infty \\	
		\end{cases}
	\end{aligned}
\end{equation*}
From post-examination, the solutions at infinity becomes another branch diffeomorphic to a circle $S^1$:
\begin{equation*}
	\begin{cases}
		x \equiv \infty \\
		y \in \mathbb{R} \cup \{\infty\} \\
		z \equiv \infty \\ 
		w = -y \\
	\end{cases}	 
\end{equation*}

\section{Conic I: solution at infinity}

The condition on linkage lengths is 	
\begin{equation*} 
	f_{22}=0, ~~ f_{20} \neq 0 , ~~ f_{02} \neq 0  ~~ \Leftrightarrow ~~ \begin{dcases}
		\alpha - \beta + \gamma - \delta = 0 \\
		\alpha - \beta - \gamma + \delta \neq 0 \\
		\alpha + \beta - \gamma - \delta \neq 0
	\end{dcases}
\end{equation*} 
which implies $\sigma=\alpha+\gamma=\beta+\delta$, and:
\begin{equation*} 
	\begin{aligned}
		& ~~ \Rightarrow ~~ \begin{cases}
			\gamma(\beta-\alpha)x_1^2y_2^2-2\alpha\gamma x_1y_1x_2y_2+\alpha(\beta-\gamma)x_2^2y_1^2+\beta(\alpha+\gamma)x_2^2y_2^2 =0 \\
			\alpha \beta x_2^2 (z_1^2+z_2^2)= \gamma \delta z_2^2(x_1^2+x_2^2) \\
			\delta(\alpha-\beta)x_1^2w_2^2-2\beta\delta x_1w_1x_2w_2+\beta(\alpha-\delta)x_2^2w_1^2+\alpha(\beta+\delta)x_2^2w_2^2 =0
		\end{cases} \\
		& ~~ \Rightarrow ~~ \begin{cases}
			x_1 \neq 0, ~~ x_2=0 \\
			y_1 \neq 0, ~~ y_2 = 0 \\
			z_1 \neq 0, ~~ z_2=0 \\
			w_1 \neq 0, ~~ w_2 = 0 \\
		\end{cases} ~~ \Rightarrow ~~ \begin{cases}
			x = \infty \\
			y = \infty \\
			z = \infty \\
			w = \infty \\
		\end{cases} \\
	\end{aligned}
\end{equation*}
This is exactly the limit point of the two branches in the finite solution.

\section{Conic II: solution at infinity}

The condition on linkage lengths is 
\begin{equation*} 
	f_{22} \neq 0, ~~ f_{20} = 0 , ~~ f_{02} \neq 0 ~~ \Leftrightarrow ~~ \begin{dcases}
		\alpha - \beta + \gamma - \delta \neq 0 \\
		\alpha - \beta - \gamma + \delta = 0 \\
		\alpha + \beta - \gamma - \delta \neq 0
	\end{dcases}
\end{equation*}
\begin{equation*}
	\sigma = \dfrac{\alpha + \beta - \gamma - \delta}{2}
\end{equation*}
\begin{equation*} 
	\begin{aligned}
		& \begin{cases}
			\gamma(\alpha-\beta)x_1^2y_1^2-2\alpha\gamma x_1y_1x_2y_2+\beta(\alpha-\gamma)x_2^2y_1^2+\alpha(\beta+\gamma)x_2^2y_2^2 =0 \\
			\alpha \beta x_2^2 (z_1^2+z_2^2)=\gamma \delta z_2^2(x_1^2+x_2^2) \\
			\delta(\beta-\alpha)x_1^2w_1^2-2\beta\delta x_1w_1x_2w_2+\alpha(\beta-\delta)x_2^2w_1^2+\beta(\alpha+\delta)x_2^2w_2^2 =0 \\
		\end{cases} \\ 
		~~ \Rightarrow ~~ & \begin{cases}
			x_1 \neq 0, ~~ x_2=0 \\
			y_1 = 0, ~~ y_2 \neq 0 \\
			z_1 \neq 0, ~~ z_2=0 \\
			w_1 = 0, ~~ w_2 \neq 0 \\
		\end{cases} \mathrm{or} \quad \begin{cases}
			y_1 \neq 0, ~~ y_2 = 0 \\
			\gamma(\alpha-\beta)x_1^2+\beta(\alpha-\gamma)x_2^2=0 \\
			\alpha \beta (z^2+1)=\gamma \delta (x^2+1) \\
			\beta \gamma (w^{-2}+1) = \delta \alpha  \\
		\end{cases} \\
		& \mathrm{or} \quad \begin{cases}
			w_1 \neq 0, ~~ w_2 = 0 \\
			\delta(\beta-\alpha)x_1^2+\alpha(\beta-\delta)x_2^2=0 \\
			\delta \alpha (y^{-2} +1 ) = \beta \gamma \\
			\alpha \beta (z^2+1)=\gamma \delta (x^2+1) \\
		\end{cases} \\
		~~ \Rightarrow ~~ & \begin{cases}
			x = \infty \\
			y = 0 \\
			z = \infty \\
			w=0 
		\end{cases} \mathrm{or} \quad \begin{dcases}
			x = \pm \sqrt{ \dfrac{\alpha(\beta-\gamma)}{\gamma(\beta-\alpha)}-1} \\
			y = \infty \\
			z = \pm \sqrt{\dfrac{\delta(\gamma-\beta)}{\beta(\gamma-\delta)}-1} \\
			w^{-1} = \pm \sqrt{\dfrac{\delta \alpha}{\beta \gamma}-1}  \\
		\end{dcases} \quad \mathrm{or} \quad \begin{dcases}
			x = \pm \sqrt{ \dfrac{\beta(\alpha-\delta)}{\delta(\alpha-\beta)}-1} \\
			y^{-1} = \pm \sqrt{\dfrac{\beta \gamma}{ \delta \alpha}-1}  \\
			z = \pm \sqrt{ \dfrac{\gamma(\delta-\alpha)}{\alpha(\delta-\gamma)}-1} \\
			w = \infty \\
		\end{dcases}
	\end{aligned}
\end{equation*}

From post-examination, the isolated solutions at infinity are:
\begin{equation*}
	\begin{cases}
		x = \infty \\
		y = 0 \\
		z = \infty \\
		w=0 
	\end{cases}
\end{equation*}
and when $\delta \alpha > \beta\gamma$, 
\begin{equation*}
	\begin{dcases}
		x = \mathrm{sign} (\sigma)
		\sqrt{ \dfrac{\alpha(\beta-\gamma)}{\gamma(\beta-\alpha)}-1} \\
		y = \infty \\
		z = -\mathrm{sign} (\sigma)  \sqrt{\dfrac{\delta(\gamma-\beta)}{\beta(\gamma-\delta)}-1} \\
		w^{-1} = \sqrt{\dfrac{\delta \alpha}{\beta \gamma}-1}  \\ 
	\end{dcases} , ~ \begin{dcases}
		x = -\mathrm{sign} (\sigma) \sqrt{ \dfrac{\alpha(\beta-\gamma)}{\gamma(\beta-\alpha)}-1} \\
		y = \infty \\
		z = \mathrm{sign} (\sigma) \sqrt{\dfrac{\delta(\gamma-\beta)}{\beta(\gamma-\delta)}-1} \\
		w^{-1} = -\sqrt{\dfrac{\delta \alpha}{\beta \gamma}-1}  \\
	\end{dcases}
\end{equation*}
On the other hand, when $\delta \alpha < \beta\gamma$,
\begin{equation*}
	\begin{dcases}
		x = \mathrm{sign} (\sigma) \sqrt{ \dfrac{\beta(\alpha-\delta)}{\delta(\alpha-\beta)}-1} \\
		y^{-1} = \sqrt{\dfrac{\beta \gamma}{ \delta \alpha}-1}  \\
		z = -\mathrm{sign} (\sigma) \sqrt{ \dfrac{\gamma(\delta-\alpha)}{\alpha(\delta-\gamma)}-1} \\
		w = \infty 
	\end{dcases} , ~ \begin{dcases}
		x = -\mathrm{sign} (\sigma) \sqrt{ \dfrac{\beta(\alpha-\delta)}{\delta(\alpha-\beta)}-1} \\
		y^{-1} = - \sqrt{\dfrac{\beta \gamma}{ \delta \alpha}-1}  \\
		z = \mathrm{sign} (\sigma) \sqrt{ \dfrac{\gamma(\delta-\alpha)}{\alpha(\delta-\gamma)}-1} \\
		w = \infty \\ 
	\end{dcases}
\end{equation*}
Note that the two groups of solutions above will switch between real and complex value depending on whether $\delta \alpha > \beta\gamma$ or $\delta \alpha < \beta\gamma$. Here only real values are admissible solutions at infinity. 

\section{Conic III: solution at infinity}

The condition on linkage lengths is 
\begin{equation*} 
	f_{22} \neq 0, ~~ f_{20} \neq 0 , ~~ f_{02} = 0 ~~ \Leftrightarrow ~~ \begin{dcases}
		\alpha - \beta + \gamma - \delta \neq 0 \\
		\alpha - \beta - \gamma + \delta \neq 0 \\
		\alpha + \beta - \gamma - \delta = 0
	\end{dcases}
\end{equation*}
\begin{equation*}
	\sigma = \dfrac{-\alpha + \beta + \gamma - \delta}{2} + \pi
\end{equation*}
\begin{equation*} 
	\begin{aligned}
		& \begin{cases}
			\alpha(\gamma-\beta)x_1^2y_1^2+\beta(\gamma-\alpha)x_1^2y_2^2-2\alpha\gamma x_1y_1x_2y_2+\gamma(\alpha+\beta)x_2^2y_2^2 =0 \\
			\alpha \beta x_1^2 (z_1^2+z_2^2)=\gamma \delta z_1^2(x_1^2+x_2^2) \\
			\beta(\delta-\alpha)x_1^2w_1^2+\alpha(\delta-\beta)x_1^2w_2^2-2\beta\delta x_1w_1x_2w_2+\delta(\alpha+\beta)x_2^2w_2^2 =0 \\
		\end{cases} 
	\end{aligned}
\end{equation*}	
\begin{equation*} 
	\begin{aligned}		 
		~~ \Rightarrow ~~ & \begin{cases}
			x_1 \neq 0, ~~ x_2 = 0 \\
			\alpha(\gamma-\beta)y_1^2+\beta(\gamma-\alpha)y_2^2=0 \\
			\alpha \beta (z^{-2}+1)=\gamma \delta \\
			\beta(\delta-\alpha)w_1^2+\alpha(\delta-\beta)w_2^2=0  \\
		\end{cases} \mathrm{or} \quad  \begin{cases}
			x_1 = 0, ~~ x_2 \neq 0 \\
			y_1 \neq 0, ~~ y_2 = 0 \\
			z_1 = 0, ~~ z_2 \neq 0 \\
			w_1 \neq 0, ~~ w_2 = 0 \\
		\end{cases} \\
		& \mathrm{or} \quad \begin{cases}
			z_1 \neq 0, ~~ z_2 = 0 \\
			\alpha(\gamma-\delta)w_1^2+\delta(\gamma-\alpha)w_2^2=0  \\
			\gamma \delta (x^{-2}+1)=\alpha \beta \\
			\delta(\beta-\gamma)y_1^2+\gamma(\beta-\delta)y_2^2=0 \\
		\end{cases}
	\end{aligned}
\end{equation*}	
\begin{equation*} 
\begin{aligned}	
		~~ \Rightarrow ~~ & \begin{dcases}
			x = \infty \\
			y = \pm \sqrt{ \dfrac{\gamma(\beta-\alpha)}{\alpha(\beta-\gamma)}-1} \\
			z^{-1} = \pm \sqrt{\dfrac{\gamma \delta}{\alpha \beta}-1} \\
			w = \pm \sqrt{\dfrac{\delta(\alpha-\beta)}{\beta(\alpha-\delta)}-1} 
		\end{dcases} \quad \mathrm{or} \quad  \begin{cases}
			x = 0 \\
			y = \infty \\
			z = 0 \\
			w = \infty \\
		\end{cases} 
		\mathrm{or} \quad  \begin{dcases}
			x^{-1} = \pm \sqrt{\dfrac{\alpha \beta}{\gamma \delta}-1} \\
			y = \pm \sqrt{ \dfrac{\beta(\gamma-\delta)}{\delta(\gamma-\beta)}-1} \\
			z = \infty \\
			w = \pm \sqrt{\dfrac{\alpha(\delta-\gamma)}{\gamma(\delta-\alpha)}-1}  \\
		\end{dcases}
	\end{aligned}
\end{equation*}

From post-examination, the isolated solutions at infinity are:
\begin{equation*}
	\begin{dcases}
		x = 0 \\
		y = \infty \\
		z = 0 \\
		w = \infty \\
	\end{dcases}
\end{equation*}
and, when $\gamma \delta > \alpha \beta$:
\begin{equation*}
	\begin{dcases}
		x = \infty \\
		y = \mathrm{sign} (\sigma) \sqrt{ \dfrac{\gamma(\beta-\alpha)}{\alpha(\beta-\gamma)}-1} \\
		z^{-1} = \sqrt{\dfrac{\gamma \delta}{\alpha \beta}-1} \\
		w = -\mathrm{sign} (\sigma) \sqrt{\dfrac{\delta(\alpha-\beta)}{\beta(\alpha-\delta)}-1} 
	\end{dcases} , ~ \begin{dcases}
		x = \infty \\
		y = -\mathrm{sign} (\sigma) \sqrt{ 	\dfrac{\gamma(\beta-\alpha)}{\alpha(\beta-\gamma)}-1} \\
		z^{-1} = - \sqrt{\dfrac{\gamma \delta}{\alpha 	\beta}-1} \\
		w = \mathrm{sign} (\sigma) 	\sqrt{\dfrac{\delta(\alpha-\beta)}{\beta(\alpha-\delta)}-1} 
	\end{dcases} 
\end{equation*}
when $\gamma \delta < \alpha \beta$:
\begin{equation*}
	\begin{dcases}
		x^{-1} = \sqrt{\dfrac{\alpha \beta}{\gamma \delta}-1} \\
		y = \mathrm{sign} (\sigma) \sqrt{ \dfrac{\beta(\gamma-\delta)}{\delta(\gamma-\beta)}-1} \\
		z = \infty \\
		w = -\mathrm{sign} (\sigma) \sqrt{\dfrac{\alpha(\delta-\gamma)}{\gamma(\delta-\alpha)}-1}  \\
	\end{dcases} , ~ \begin{dcases}
		x^{-1} = - \sqrt{\dfrac{\alpha \beta}{\gamma \delta}-1} \\
		y = -\mathrm{sign} (\sigma) \sqrt{ \dfrac{\beta(\gamma-\delta)}{\delta(\gamma-\beta)}-1} \\
		z = \infty \\
		w = \mathrm{sign} (\sigma) \sqrt{\dfrac{\alpha(\delta-\gamma)}{\gamma(\delta-\alpha)}-1}  \\
	\end{dcases}
\end{equation*}
Note that the two groups of solutions above will switch between real and complex value depending on whether $\gamma \delta > \alpha \beta$ or $\gamma \delta < \alpha \beta$. Here only real values are admissible solutions at infinity. 

\section{Elliptic: solution at infinity}

The condition on linkage lengths is:
\begin{equation*} 
	f_{22} \neq 0, ~~ f_{20} \neq 0 , ~~ f_{02} \neq 0 ~~ \Leftrightarrow ~~ \begin{dcases}
		\alpha - \beta + \gamma - \delta \neq 0 \\
		\alpha - \beta - \gamma + \delta \neq 0 \\
		\alpha + \beta - \gamma - \delta \neq 0
	\end{dcases}
\end{equation*}
\begin{equation*}
	\sigma = \dfrac{\alpha + \beta +\gamma + \delta}{2}
\end{equation*}
\begin{equation*}
	\begin{aligned}
		& \begin{cases}
			x_1 \neq 0, ~~ x_2 = 0 \\
			(\sigma-\beta)(\sigma-\delta-\beta)y_1^2+(\sigma-\alpha)(\sigma-\delta-\alpha)y_2^2=0 \\
			(\sigma-\alpha)(\sigma-\gamma-\alpha)w_1^2+(\sigma-\beta)(\sigma-\gamma-\beta)w_2^2=0 \\
			(\sigma-\beta-\delta)(\sigma-\alpha-\delta)z_1^2+(\sigma-\beta)(\sigma-\alpha)z_2^2=0
		\end{cases} \\
		\mathrm{or} \quad & \begin{cases}
			y_1 \neq 0, ~~ y_2 = 0 \\
			(\sigma-\gamma)(\sigma-\alpha-\gamma)z_1^2+(\sigma-\beta)(\sigma-\alpha-\beta)z_2^2=0 \\
			(\sigma-\beta)(\sigma-\delta-\beta)x_1^2+(\sigma-\gamma)(\sigma-\delta-\gamma)x_2^2=0 \\
			(\sigma-\gamma-\alpha)(\sigma-\beta-\alpha)w_1^2+(\sigma-\gamma)(\sigma-\beta)w_2^2=0
		\end{cases} \\
		\mathrm{or} \quad & \begin{cases}
			z_1 \neq 0, ~~ z_2 = 0 \\
			(\sigma-\delta)(\sigma-\beta-\delta)w_1^2+(\sigma-\gamma)(\sigma-\beta-\gamma)w_2^2=0 \\
			(\sigma-\gamma)(\sigma-\alpha-\gamma)y_1^2+(\sigma-\delta)(\sigma-\alpha-\delta)y_2^2=0 \\
			(\sigma-\delta-\beta)(\sigma-\gamma-\beta)x_1^2+(\sigma-\delta)(\sigma-\gamma)x_2^2=0
		\end{cases} \\
		\mathrm{or} \quad & \begin{cases}
			w_1 \neq 0, ~~ w_2 = 0 \\
			(\sigma-\alpha)(\sigma-\gamma-\alpha)x_1^2+(\sigma-\delta)(\sigma-\gamma-\delta)x_2^2=0 \\
			(\sigma-\delta)(\sigma-\beta-\delta)z_1^2+(\sigma-\alpha)(\sigma-\beta-\alpha)z_2^2=0 \\
			(\sigma-\alpha-\gamma)(\sigma-\delta-\gamma)y_1^2+(\sigma-\alpha)(\sigma-\delta)y_2^2=0
		\end{cases} \\
	\end{aligned}
\end{equation*}

\begin{equation*}
	\begin{aligned}
		\Rightarrow \quad & \begin{dcases}
			x =  \infty \\
			y =  \pm \sqrt{\dfrac{\gamma(\beta-\alpha)}{(\sigma-\beta)(\sigma-\alpha-\gamma)}-1} \\
			z^{-1} = \pm \sqrt{\dfrac{\gamma \delta}{(\sigma-\alpha)(\sigma-\beta)}-1} \\
			w = \pm \sqrt{\dfrac{\delta(\alpha-\beta)}{(\sigma-\alpha)(\sigma-\beta-\delta)}-1} 
		\end{dcases}  ~\mathrm{or}~ \begin{dcases}
			y =  \infty \\
			z =  \pm \sqrt{\dfrac{\delta(\gamma-\beta)}{(\sigma-\gamma)(\sigma-\beta-\delta)}-1} \\
			w^{-1} = \pm \sqrt{\dfrac{\delta \alpha}{(\sigma-\beta)(\sigma-\gamma)}-1} \\
			x = \pm \sqrt{\dfrac{\alpha(\beta-\gamma)}{(\sigma-\beta)(\sigma-\alpha-\gamma)}-1}
		\end{dcases} \\
		\mathrm{or} ~ & \begin{dcases}
			z = \infty \\
			w = \pm \sqrt{\dfrac{\alpha(\delta-\gamma)}{(\sigma-\delta)(\sigma-\alpha-\gamma)}-1} \\
			x^{-1} = \pm \sqrt{\dfrac{\alpha \beta}{(\sigma-\gamma)(\sigma-\delta)}-1} \\
			y = \pm \sqrt{\dfrac{\beta(\gamma-\delta)}{(\sigma-\gamma)(\sigma-\beta-\delta)}-1} \\
		\end{dcases}
		~\mathrm{or}~ \begin{dcases}
			w =  \infty \\
			x =  \pm \sqrt{\dfrac{\beta(\alpha-\delta)}{(\sigma-\alpha)(\sigma-\beta-\delta)}-1} \\
			y^{-1} = \pm \sqrt{\dfrac{\beta \gamma}{(\sigma-\alpha)(\sigma-\delta)}-1} \\
			z = \pm \sqrt{\dfrac{\gamma(\delta-\alpha)}{(\sigma-\delta)(\sigma-\alpha-\gamma)}-1} \\
		\end{dcases} \\
	\end{aligned} 
\end{equation*} 

Similarly, our next step is to determine how many of the above solutions are real. Take $ x = \infty$ as an example, the condition for $y, ~z, ~w$ to be real numbers is (referring to the helpful identities in Subsection \ref{section: sign convention}): 
\begin{equation*}
	\begin{aligned}
		(\sigma-\alpha)(\sigma-\beta) - \gamma \delta & = (\sigma-\alpha-\gamma)(\sigma-\beta-\delta) \\
		& = \dfrac{\alpha \beta \gamma \delta - (\sigma-\alpha)(\sigma-\beta)(\sigma-\gamma)(\sigma-\delta)}{\sigma(\sigma-\alpha-\beta)} < 0
	\end{aligned}
\end{equation*} 
Following the notation introduced for the elliptic type used in Section \ref{section: planar elliptic}, 
\begin{equation}
	M = \dfrac{\alpha \beta \gamma \delta}{(\sigma-\alpha)(\sigma-\beta)(\sigma-\gamma)(\sigma-\delta)} \in (0, 1) \cup (1, +\infty)
\end{equation}
the condition for $x=\infty$ to be a real configuration is
\begin{equation*}
	(M-1)(\sigma-\alpha-\beta) < 0
\end{equation*} 
Let us write all the cases together:
\begin{equation*}
	\begin{dcases}
		(M-1)(\sigma-\alpha-\beta)<0 ~~ \Leftrightarrow ~~ x=\infty \mathrm{~can~be~reached} \\
		(M-1)(\sigma-\beta-\gamma)<0 ~~ \Leftrightarrow ~~ y=\infty \mathrm{~can~be~reached} \\
		(M-1)(\sigma-\gamma-\delta)<0 ~~ \Leftrightarrow ~~ z=\infty \mathrm{~can~be~reached} \\
		(M-1)(\sigma-\delta-\alpha)<0 ~~ \Leftrightarrow ~~ w=\infty \mathrm{~can~be~reached} \\
	\end{dcases}
\end{equation*}

From post-examination, there are 8 solutions at infinity, when $(M-1)(\sigma-\alpha-\beta)<0$, $x$ reaches the infinity:
\begin{equation*}
	\begin{dcases}
		x =  \infty \\
		y =  \mathrm{sign} (\sigma) \sqrt{\dfrac{\gamma(\beta-\alpha)}{(\sigma-\beta)(\sigma-\alpha-\gamma)}-1} \\
		z^{-1} = \sqrt{\dfrac{\gamma \delta}{(\sigma-\alpha)(\sigma - \beta)}-1} \\
		w = -\mathrm{sign} (\sigma) \sqrt{\dfrac{\delta(\alpha-\beta)}{(\sigma-\alpha)(\sigma-\beta-\delta)}-1} 
	\end{dcases} , ~ 
	\begin{dcases}
		x =  \infty \\
		y =  -\mathrm{sign} (\sigma) \sqrt{\dfrac{\gamma(\beta-\alpha)}{(\sigma-\beta)(\sigma-\alpha-\gamma)}-1} \\
		z^{-1} = - \sqrt{\dfrac{\gamma \delta}{(\sigma-\alpha)(\sigma-\beta)}-1} \\
		w = \mathrm{sign} (\sigma) \sqrt{\dfrac{\delta(\alpha-\beta)}{(\sigma-\alpha)(\sigma-\beta-\delta)}-1} 
	\end{dcases} \\
\end{equation*}
When $(M-1)(\sigma-\beta-\gamma)<0$, $y$ reaches the infinity:
\begin{equation*}
	\begin{dcases}
		y =  \infty \\
		z =  \mathrm{sign} (\sigma) \sqrt{\dfrac{\delta(\gamma-\beta)}{(\sigma-\gamma)(\sigma-\beta-\delta)}-1} \\
		w^{-1} = \sqrt{\dfrac{\delta \alpha}{(\sigma-\beta)(\sigma-\gamma)}-1} \\
		x = -\mathrm{sign} (\sigma) \sqrt{\dfrac{\alpha(\beta-\gamma)}{(\sigma-\beta)(\sigma-\alpha-\gamma)}-1} \\
	\end{dcases} , ~
	\begin{dcases}
		y =  \infty \\
		z =  -\mathrm{sign} (\sigma) \sqrt{\dfrac{\delta(\gamma-\beta)}{(\sigma-\gamma)(\sigma-\beta-\delta)}-1} \\
		w^{-1} = - \sqrt{\dfrac{\delta \alpha}{(\sigma-\beta)(\sigma-\gamma)}-1} \\
		x = \mathrm{sign} (\sigma) \sqrt{\dfrac{\alpha(\beta-\gamma)}{(\sigma-\beta)(\sigma-\alpha-\gamma)}-1} \\
	\end{dcases}
\end{equation*}
When $(M-1)(\sigma-\gamma-\delta)<0$, $z$ reaches the infinity:
\begin{equation*}
	\begin{dcases}
		z =  \infty \\
		w = \mathrm{sign} (\sigma) \sqrt{\dfrac{\alpha(\delta-\gamma)}{(\sigma-\delta)(\sigma-\alpha-\gamma)}-1} \\
		x^{-1} = \sqrt{\dfrac{\alpha \beta}{(\sigma-\gamma)(\sigma-\delta)}-1} \\
		y = -\mathrm{sign} (\sigma) \sqrt{\dfrac{\beta(\gamma-\delta)}{(\sigma-\gamma)(\sigma-\beta-\delta)}-1} \\
	\end{dcases} , ~ 
	\begin{dcases}
		z =  \infty \\
		w = -\mathrm{sign} (\sigma) \sqrt{\dfrac{\alpha(\delta-\gamma)}{(\sigma-\delta)(\sigma-\alpha-\gamma)}-1} \\
		x^{-1} = - \sqrt{\dfrac{\alpha \beta}{(\sigma-\gamma)(\sigma-\delta)}-1} \\
		y = \mathrm{sign} (\sigma) \sqrt{\dfrac{\beta(\gamma-\delta)}{(\sigma-\gamma)(\sigma-\beta-\delta)}-1} \\
	\end{dcases}
\end{equation*}
When $(M-1)(\sigma-\delta-\alpha)<0$, $w$ reaches the infinity:
\begin{equation*}
	\begin{dcases}
		w =  \infty \\
		x =  \mathrm{sign} (\sigma) \sqrt{\dfrac{\beta(\alpha-\delta)}{(\sigma-\alpha)(\sigma-\beta-\delta)}-1} \\
		y^{-1} = \sqrt{\dfrac{\beta \gamma}{(\sigma-\alpha)(\sigma-\delta)}-1} \\
		z = -\mathrm{sign} (\sigma) \sqrt{\dfrac{\gamma(\delta-\alpha)}{(\sigma-\delta)(\sigma-\alpha-\gamma)}-1} \\
	\end{dcases} ,~ 
	\begin{dcases}
		w =  \infty \\
		x =  -\mathrm{sign} (\sigma) \sqrt{\dfrac{\beta(\alpha-\delta)}{(\sigma-\alpha)(\sigma-\beta-\delta)}-1} \\
		y^{-1} = -\sqrt{\dfrac{\beta \gamma}{(\sigma-\alpha)(\sigma-\delta)}-1} \\
		z = \mathrm{sign} (\sigma) \sqrt{\dfrac{\gamma(\delta-\alpha)}{(\sigma-\delta)(\sigma-\alpha-\gamma)}-1} \\
	\end{dcases} 
\end{equation*}

\section{The Grashof condition and self-intersection}

\begin{prop}
	(The Grashof condition) If the sum of the shortest and longest linkage of a planar quadrilateral linkage is less than or equal to the sum of the remaining two linkages, then the shortest linkage can rotate fully with respect to a neighbouring linkage.
\end{prop}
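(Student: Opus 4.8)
The plan is to reduce the kinematic statement to the feasibility of the two triangles obtained by cutting the quadrilateral along the diagonal opposite the crank vertex, and then to read off that feasibility from the triangle inequalities of Proposition~\ref{prop: linkage length range}. First I would fix cyclic notation $V_1V_2 = s$, $V_2V_3 = b$, $V_3V_4 = c$, $V_4V_1 = d$, where $s$ is the shortest bar and $b$ is a chosen neighbour of $s$. Saying that $s$ ``rotates fully with respect to $b$'' means that the angle $\rho$ at $V_2$ (equivalently its half-tangent $\tan(\rho/2)$) realizes every value in $\mathbb{R}/2\pi$. As $\rho$ runs once around, the diagonal $u=|V_1V_3|$, governed by $u^2 = s^2 + b^2 + 2sb\cos\rho$, sweeps the whole interval $[\,|s-b|,\,s+b\,]$. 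A configuration of the complete linkage exists for a given $\rho$ precisely when the remaining triangle $V_1V_3V_4$ with sides $u,c,d$ closes up, which by Proposition~\ref{prop: linkage length range} applied to a triangle is exactly $|c-d|\le u\le c+d$. Hence full rotation is equivalent to the interval inclusion $[\,|s-b|,\,s+b\,]\subseteq[\,|c-d|,\,c+d\,]$, i.e. to the two inequalities $s+b\le c+d$ and $|s-b|\ge|c-d|$.

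The second step is to choose the neighbour $b$ so that these two inequalities follow from the Grashof hypothesis. Since $s$ has two neighbours and at most one can be the longest bar $l$, I would take $b$ to be a neighbour of $s$ with $b\neq l$, so that $l\in\{c,d\}$; writing $c=l$, the pair $\{b,d\}$ is then exactly the two middle bars. Now $s+b\le l+d=c+d$ holds because $s\le d$ and $b\le l$ (as $s$ is shortest and $l$ longest), while $|s-b|\ge|c-d|$ unwinds, using $b\ge s$ and $l\ge d$, to $b+d\ge l+s$, which is precisely the Grashof condition that the sum of the two middle bars dominates $s+l$. Both inclusions therefore hold and $s$ cranks fully about $V_2$. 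The lone exceptional configuration, in which both neighbours of $s$ equal the longest length, is checked directly by the same two inequalities and is handled identically.

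To connect this with the analytic machinery of the note I would remark that the upper bound $s+b\le c+d$ is the statement $\sigma-s-b\ge 0$, and that the Grashof inequality is equivalent to $M\le 1$ through the proposition relating $\mathrm{max}+\mathrm{min}$ to $\sigma$. The endpoint $\rho=\pi$, i.e. $u=|s-b|$, is exactly the half-tangent attaining $\infty$, whose reachability is governed by the solutions-at-infinity criterion $(M-1)(\sigma-s-b)<0$; the interval-inclusion argument subsumes this by asking for both diagonal extremes simultaneously, so it delivers the full revolution rather than merely the passage through $\rho=\pi$.

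The main obstacle I anticipate is the continuity bookkeeping rather than any single inequality. Two points need care. First, one must argue that the valid configurations, one for each $\rho$, form a single connected branch traversed monotonically in $\rho$, so that the motion is a genuine full turn of $s$ about $b$ and not two disjoint rocking arcs; this is where I would invoke the global parametrizations of the finite-solution sections, in which each branch is diffeomorphic to $S^1$ or to $\mathbb{R}\cup\{\infty\}$, to show the apex $V_4$ stays on one side of $V_1V_3$. Under strict Grashof ($M<1$) both endpoint inequalities are strict, so $[\,|s-b|,\,s+b\,]$ lies in the open feasible interval, no degeneration occurs, and the crank turns cleanly. The delicate instant is the equality case $s+l=b+d$ ($M=1$): there the lower endpoints coincide, $|s-b|=|c-d|$, and at $\rho=\pi$ the triangle $V_1V_3V_4$ collapses to a collinear ``change-point.'' I would note that this is still an admissible (possibly self-intersecting) planar configuration, so the rotation remains full while passing through a singular fold, and the branch stays connected across it.
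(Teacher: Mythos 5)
Your proposal is correct, but it takes a genuinely different route from the paper. The paper deduces the Grashof condition from its solutions-at-infinity analysis of the elliptic type: with $\beta$ the shortest bar, at least one of $\sigma-\alpha-\beta$ and $\sigma-\beta-\gamma$ is positive (their sum is $\delta-\beta\ge 0$), so the reachability criteria $(M-1)(\sigma-\alpha-\beta)<0$ or $(M-1)(\sigma-\beta-\gamma)<0$ can hold exactly when $M<1$, i.e.\ $\max+\min<\sigma$; the non-elliptic types are dismissed as obvious. You instead cut the quadrilateral along the diagonal $u$, reduce full rotation to the interval inclusion $[\,|s-b|,\,s+b\,]\subseteq[\,|c-d|,\,c+d\,]$, and verify the two endpoint inequalities directly from $s$ being shortest together with the Grashof hypothesis. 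Your argument buys several things the paper's sketch does not: it is self-contained (no elliptic-function machinery), it certifies the entire range of the crank angle rather than only the passage through $\rho=\pi$ (which is all that ``reaching $x=\infty$'' establishes), it treats all linkage types uniformly instead of splitting off the degenerate cases, and it explicitly covers the boundary case where the Grashof inequality is an equality (the change point), which the paper's strict criterion $M<1$ silently excludes even though the proposition is stated with ``less than or equal to''. What the paper's route buys in exchange is integration with the rest of the note: it identifies \emph{which} joint angle passes through $\pi$ via the signs of $\sigma-\alpha-\beta$ and $\sigma-\beta-\gamma$, and it ties the Grashof condition to the modulus $M$ that organises the elliptic parametrization. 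Two small points to tidy: ``at most one neighbour can be the longest bar'' is false under ties in length (you do handle that case separately, so just rephrase), and the connectivity step --- that keeping $V_4$ on one side of the diagonal yields a single monotone revolution --- deserves the one-line justification that under strict Grashof the triangle $V_1V_3V_4$ never degenerates, so the sign of its oriented area can be held fixed throughout the turn.
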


We could naturally examine this condition from our previous discussions on the solutions at infinity. It holds obviously except for the elliptic type. Without loss of generality, suppose $\beta$ is the shortest linkage, then the condition for $x= \infty$ or $y= \infty$ is
\begin{equation*}
	(M-1)(\sigma-\alpha-\beta)<0 ~~ \mathrm{or} ~~ (M-1)(\sigma-\beta-\gamma)<0 
\end{equation*}
Since $\beta$ is the shortest, the statement below holds
\begin{equation*}
	\sigma-\alpha-\beta>0 ~~\mathrm{or}~~ \sigma-\beta-\gamma>0
\end{equation*}
hence in order for $x= \infty$ or $y= \infty$ can be reached, we need to require:
\begin{equation*}
	M < 1 \Leftrightarrow \max + \min < \sigma
\end{equation*}
The right hand side is exactly the Grashof condition.

\begin{prop}
	When no rotational angle reaches the infinity, a planar 4-bar linkage is self-intersected if and only if
	\begin{equation*}
		(\mathrm{sign}(x), \mathrm{sign}(y), \mathrm{sign}(z), \mathrm{sign}(w)) = 
		\begin{dcases}
			(1, ~1, ~-1, ~-1) \\
			(-1, ~1, ~1, ~-1) \\
			(-1, ~-1, ~1, ~1) \\
			(1, ~-1, ~-1, ~1)
		\end{dcases}
	\end{equation*}  
\end{prop}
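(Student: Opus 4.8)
The plan is to recast self-intersection as a statement about the signed areas (orientations) of the four consecutive vertex-triples of the quadrilateral $O\to A\to B\to C\to O$ set up in Section~2, and then to read the admissible sign patterns off a single affine identity.

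I would first build the dictionary between the signs of $x,y,z,w$ and these orientations. The half-angle identity gives $\sin\rho_x=2x/(1+x^2)$, so $\mathrm{sign}(x)=\mathrm{sign}(\sin\rho_x)$, and similarly for $y,z,w$. At a vertex where bars of lengths $a,b$ meet with rotational angle $\rho$, the cross product of the outgoing edge with the incoming edge equals $ab\sin\rho$; with $O=(0,0)$ and $A=(\beta,0)$ this is immediate at $O$ and $A$ (values $\alpha\beta\sin\rho_x$ and $\beta\gamma\sin\rho_y$), and the counter-clockwise angle convention of Figure~1 propagates it to $B$ and $C$ (values $\gamma\delta\sin\rho_z$ and $\delta\alpha\sin\rho_w$). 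Writing $o_O,o_A,o_B,o_C$ for the signs of the corresponding signed triangle areas $[OAC],[OAB],[ABC],[OBC]$, this yields $o_O=\mathrm{sign}(x)$, $o_A=\mathrm{sign}(y)$, $o_B=\mathrm{sign}(z)$, $o_C=\mathrm{sign}(w)$. I assume genericity here: no variable is $\infty$ (the hypothesis) and none is $0$ (the flat configurations, which I would treat as the boundary between the simple and crossed regimes).

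Next I would classify the quadrilateral by the quadruple $(o_O,o_A,o_B,o_C)\in\{\pm1\}^4$: it is convex when all four agree, simple but non-convex when exactly one differs, and self-intersecting precisely in the remaining two-against-two case. The key algebraic input is the affine dependence of four coplanar points, which in signed-area form reads
\[ [ABC]-[OBC]+[OAC]-[OAB]=0, \]
that is $o_O+o_B=o_A+o_C$ as signed areas. This pairs the opposite angles $\{x,z\}$ against $\{y,w\}$ and immediately forbids the cyclically alternating patterns $(+,-,+,-)$ and $(-,+,-,+)$, since there the left side is a sum of two positive (resp.\ negative) areas while the right side is a sum of two negative (resp.\ positive) ones. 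Hence the only admissible two-against-two patterns are the four with two adjacent like signs, namely exactly $(+,+,-,-)$, $(-,+,+,-)$, $(-,-,+,+)$ and $(+,-,-,+)$, which is the asserted list.

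To close the argument I would confirm that the two-against-two class is exactly the self-intersecting one: two positive and two negative turning angles force the total turning $\rho_x+\rho_y+\rho_z+\rho_w$ to be $0$ (since each lies in $(-\pi,\pi)$, so the admissible totals $\{-2\pi,0,2\pi\}$ leave only $0$), whereas the all-equal and three-to-one classes give $\pm2\pi$; and a closed quadrilateral has rotation index $0$ if and only if it is a bow-tie, i.e.\ self-intersecting. Alternatively one may simply check a single representative of the adjacent-block class. I expect the main obstacle to be the sign bookkeeping of the first step — establishing the orientation dictionary uniformly at all four vertices, whereas the Section~2 coordinates single out only $O$ and $A$ — together with verifying cleanly that the consecutive-triple orientation quadruple is a complete invariant of the combinatorial type. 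The algebraic heart, the identity $o_O+o_B=o_A+o_C$ that eliminates the alternating patterns, is short; the care lies in the geometry-to-signs translation and in the degenerate ($x,y,z,w\in\{0,\infty\}$) transitions.
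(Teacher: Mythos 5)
Your argument rests on the same foundation as the paper's proof: identify $\mathrm{sign}(x),\mathrm{sign}(y),\mathrm{sign}(z),\mathrm{sign}(w)$ with the orientations of the four consecutive vertex triangles via $\mathrm{sign}(\tan(\rho/2))=\mathrm{sign}(\sin\rho)$ and the cross product of the two edges at each joint, and then detect a crossing of opposite edges by whether endpoints lie on opposite sides of the other edge's supporting line, i.e.\ by disagreements among the four triangle orientations. The one genuine addition in your write-up is the affine identity $[ABC]-[OBC]+[OAC]-[OAB]=0$, used to exclude the alternating patterns $(+,-,+,-)$ and $(-,+,-,+)$. This step is actually needed for the ``only if'' direction and is left implicit in the paper: the segment-crossing test alone would also flag the alternating patterns as self-intersecting (indeed it would assert that \emph{both} pairs of opposite edges cross, which is geometrically impossible), so without ruling them out the list of admissible sign patterns would be six rather than four. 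Your identity closes this cleanly, and it is the right tool. One caution on your optional turning-number confirmation: whether $\rho_x+\rho_y+\rho_z+\rho_w$ is the total turning depends on reading the $\rho$'s as exterior (turning) angles rather than as the joint angles of Figure~\ref{fig: planar 4-bar linkage}, so the claim that the admissible totals are $\{-2\pi,0,2\pi\}$ is convention-sensitive; but since your main line (crossing criterion plus the affine identity) already yields both implications, and you offer the fallback of checking a single representative of the adjacent-block class, this does not affect correctness.
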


\begin{proof}
	Consider two line segments, say $AB$ and $CD$, These segments intersect if and only if the points $A$ and $B$ lie on opposite sides of the line $CD$ and the points $C$ and $D$ lie on the opposite sides of the line $AB$. The points $A$ and $B$ lie on different sides of $CD$ if and only if the oriented areas of the triangles $ACD$ and $BCD$ have different signs. So for a planar 4-bar linkage $ABCD$ it suffices to let the oriented areas of all four triangles be the pattern list on the right hand side of the above proposition. Further, the signs of these oriented areas equal to the signs of the rotational angles $x, ~y, ~z, ~w$.
\end{proof}

We have introduced this self-intersection check in the accompanying MATLAB app \citep{he_elliptic-fun-based_2023}.

\section*{Acknowledgement}

We thank Prof. Ivan Izmestiev for personal communication on checking the self-intersection from the oriented area. We thank James Drummond for his assistance in the MATLAB app design. This work is supported by the MathWorks-CUED grant.

\bibliographystyle{plainnat}

\end{document}